\newtheorem{theorem}{Theorem}
\newtheorem{lemma}[theorem]{Lemma}
\newtheorem{proposition}[theorem]{Proposition}
\newtheorem{problem}[theorem]{Problem}
\theoremstyle{definition}
\newtheorem{remark}[theorem]{Remark}
\numberwithin{equation}{section}
\begin{document}

\title[A Szemer\'{e}di-type theorem for subsets of the unit cube]{A Szemer\'{e}di-type theorem for subsets of the unit cube}

\author[Polona Durcik]{Polona Durcik}
\address{Polona Durcik, California Institute of Technology, 1200 E California Blvd, Pasadena, CA 91125, USA}
\email{durcik@caltech.edu}

\author[Vjekoslav Kova\v{c}]{Vjekoslav Kova\v{c}}
\address{Vjekoslav Kova\v{c}, Department of Mathematics, Faculty of Science, University of Zagreb, Bijeni\v{c}ka cesta 30, 10000 Zagreb, Croatia}
\email{vjekovac@math.hr}


\subjclass[]{
Primary
05D10, 
42B20; 
Secondary
11B30} 

\keywords{Euclidean Ramsey theory, arithmetic progression, density theorem, multilinear estimate, singular integral, oscillatory integral}

\begin{abstract}
We investigate gaps of $n$-term arithmetic progressions $x$, $x+y$, \ldots, $x+(n-1)y$ inside a positive measure subset $A$ of the unit cube $[0,1]^d$. If lengths of their gaps $y$ are evaluated in the $\ell^p$-norm for any $p$ other than $1$, $2$, \ldots, $n-1$, and $\infty$, and if the dimension $d$ is large enough, then we show that the numbers $\|y\|_{\ell^p}$ attain all values from an interval, the length of which depends only on $n$, $p$, $d$, and the measure of $A$. Known counterexamples prevent generalizations of this result to the remaining values of the exponent $p$. We also give an explicit bound for the length of the aforementioned interval. The proof makes the bound depend on the currently available bounds in Szemer\'{e}di's theorem on the integers, which are used as a black box. A key ingredient of the proof are power-type cancellation estimates for operators resembling the multilinear Hilbert transforms. As a byproduct of the approach we obtain a quantitative improvement of the corresponding (previously known) result for side lengths of $n$-dimensional cubes with vertices lying in a positive measure subset of $([0,1]^2)^n$.
\end{abstract}

\maketitle


\section{Introduction}
For a positive integer $n\geq 3$ and a number $0<\delta\leq 1/2$ let $N(n,\delta)$ denote the smallest positive integer $N$ such that each set $S\subseteq\{0,1,2,\ldots,N-1\}$ with at least $\delta N$ elements must contain a nontrivial arithmetic progression of length $n$. Celebrated theorems of Roth \cite{R53:Roth} and Szemer\'{e}di \cite{S69:Szem,S75:Szem} guarantee the existence of the numbers $N(n,\delta)$ for $n=3$ and $n\geq 4$ respectively. Bound of the form
\begin{equation}\label{eq:roth3}
N(3,\delta)\leq\exp(\delta^{-C}),
\end{equation}
for some absolute constant $C$, was first shown by Heath-Brown \cite{HB87:3}, while the analogous result for $4$-term progressions,
\begin{equation}\label{eq:roth4}
N(4,\delta)\leq\exp(\delta^{-C}),
\end{equation}
was established much more recently by Green and Tao \cite{GT17:4}. On the other hand, Gowers \cite{Gow98:4,Gow01:n} proved the bound
\begin{equation}\label{eq:szemeredi}
N(n,\delta)\leq\exp(\exp(\delta^{-C(n)})),
\end{equation}
which is still the best known one when $n\geq 5$.

In the present paper we are interested in density theorems for subsets of the Euclidean space. As a straightforward consequence of the above discrete results one can deduce the following quantitative Szemer\'{e}di-type theorem for subsets of $[0,1]^d$, where $d$ is an arbitrary dimension. We do not regard it as new result of this paper, but rather as a mere continuous-parameter reformulation of Szemer\'{e}di's theorem.

\begin{proposition}\label{prop:contszemeredi}
For integers $n\geq 3$ and $d\geq 1$ there exists a constant $C(n,d)$ such that for any number $0<\delta\leq 1/2$ and any measurable set $A\subseteq [0,1]^d$ with the Lebesgue measure at least $\delta$ one has
\[ \int_{[0,1]^d} \int_{[0,1]^d} \prod_{i=0}^{n-1} \mathbbm{1}_{A}(x+iy) \,\textup{d}y \,\textup{d}x \geq
\begin{cases} \big(\exp(\delta^{-C(n,d)})\big)^{-1} & \text{when } 3\leq n\leq 4, \\ \big(\exp(\exp(\delta^{-C(n,d)}))\big)^{-1} & \text{when } n\geq 5. \end{cases} \]
\end{proposition}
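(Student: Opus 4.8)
The plan is to deduce this from the \emph{counting} (supersaturated) form of Szemer\'{e}di's theorem on $\mathbb{Z}$ by discretizing $A$, keeping explicit track of how the function $N(n,\cdot)$ enters the constant.

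\emph{Discretization.} Fix $\varepsilon:=\min\{\delta/2,\,2^{-d-1}n^{-1}\}$. By the Lebesgue density theorem, for every sufficiently large integer $M$---how large may depend on $A$, which is harmless---the union $A'$ of those grid cubes $Q_{\mathbf{a}}:=\mathbf{a}/M+[0,1/M]^d$, $\mathbf{a}\in\{0,\dots,M-1\}^d$, that are \emph{$\varepsilon$-full}, in the sense $|A\cap Q_{\mathbf{a}}|\ge(1-\varepsilon)M^{-d}$, has measure $|A'|\ge\delta-\varepsilon\ge\delta/2$. Let $S\subseteq\{0,\dots,M-1\}^d$ collect the indices of these cubes, so $|S|\ge(\delta/2)M^d$.

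\emph{Many discrete progressions.} I claim that $S$ contains at least $c(n,d,\delta)M^{2d}$ progressions $\mathbf{a},\mathbf{a}+\mathbf{b},\dots,\mathbf{a}+(n-1)\mathbf{b}$ whose common difference $\mathbf{b}$ has positive coordinates and which stay inside the grid, where $c(n,d,\delta)>0$ depends on $\delta$ only through $N(n,c\delta)$ for some absolute $c>0$; in fact $c(n,d,\delta)\asymp_{n,d}\delta^{O(1)}N(n,c\delta)^{-(d+1)}$. This follows from the one-dimensional statement by averaging over directions. For each $\mathbb{Z}$-primitive $\mathbf{v}\in\{1,\dots,V\}^d$, with $V$ a suitable multiple of $\delta^2 M/N(n,c\delta)$ (legitimate once $M$ is large enough), the maximal arithmetic progressions of step $\mathbf{v}$ inside the grid partition the grid; hence, averaged over all such $\mathbf{v}$ and weighted by length, $S$ has density $\ge\delta/2$ along them. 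A pigeonholing then isolates a subfamily of these progressions of total length $\gtrsim_{n,d}\delta^{O(1)} V^dM^d$, each long (length $\ge N(n,c\delta)$) and dense ($S$-density $\ge\delta/4$); chopping each such progression into blocks of length $\asymp N(n,c\delta)$ and applying Szemer\'{e}di's theorem on a proportional number of them produces $\gtrsim_{n,\delta}(\text{length})/N(n,c\delta)$ nontrivial $n$-term progressions of $S$ along it. Each such progression has positive step $m\mathbf{v}$ (so it stays in the grid) and, having $\mathbb{Z}$-primitive direction $\mathbf{v}$, lies in a \emph{unique} maximal progression of the family, so it is counted once. Summing over the subfamily gives the claim.

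\emph{Lifting and conclusion.} Fix such a discrete progression $(\mathbf{a},\mathbf{b})$. Whenever $x\in\mathbf{a}/M+[0,(2M)^{-1}]^d$ and $y\in\mathbf{b}/M+[0,(2(n-1)M)^{-1}]^d$, each point $x+iy$ ($0\le i\le n-1$) lies in $Q_{\mathbf{a}+i\mathbf{b}}$; since every such cube is $\varepsilon$-full, the set of $(x,y)$ in this product box with $x+iy\in A$ for \emph{all} $i$ has measure at least $(2M)^{-d}(2(n-1)M)^{-d}-n\varepsilon M^{-d}(2(n-1)M)^{-d}\ge c'_{n,d}M^{-2d}$ by the choice of $\varepsilon$; moreover $x,y\in[0,1]^d$ (here $y\ge0$ matters). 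For distinct discrete progressions these sets are pairwise disjoint---they differ either in the cube containing $x$ or in the cube containing $y$---and on each the integrand equals $1$. Therefore the double integral is at least $c(n,d,\delta)M^{2d}\cdot c'_{n,d}M^{-2d}=c''(n,d,\delta)$, independent of both $M$ and $A$, with $c''(n,d,\delta)\asymp_{n,d}\delta^{O(1)}N(n,c\delta)^{-(d+1)}$. Feeding in \eqref{eq:roth3} and \eqref{eq:roth4} for $3\le n\le4$ and \eqref{eq:szemeredi} for $n\ge5$, and absorbing the polynomial-in-$\delta$ prefactor into the exponent, gives the stated $(\exp(\delta^{-C(n,d)}))^{-1}$ and $(\exp(\exp(\delta^{-C(n,d)})))^{-1}$ lower bounds for a suitable $C(n,d)$.

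The main obstacle is the second step: one is forced to use progressions in \emph{all} positive directions---axis-parallel ones alone yield only $\asymp M^{d+1}$ progressions, too few for the final count to be independent of the auxiliary scale $M$---the direction cutoff $V$ must be calibrated (roughly as $\delta^2M/N(n,c\delta)$) so that the progressions remain long enough to apply Szemer\'{e}di's theorem, and the incidence, boundary-effect, and degeneracy book-keeping must be carried out while keeping the dependence on $N(n,\cdot)$ fully explicit.
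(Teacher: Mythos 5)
Your route is genuinely different from the paper's: the paper proves Proposition~\ref{prop:contszemeredi} via Lemma~\ref{lemma:lowerbound}, a \emph{continuous} Varnavides argument that never discretizes $A$ at all --- it averages the density of $A$ over the $N$-point configurations $\{u+kt : 0\le k\le N-1\}$ with $t$ ranging over a small cube of positive steps and $u$ over $[0,1-\vartheta]^d$, applies discrete Szemer\'edi to each good pair $(t,u)$, and converts back to the integral by the substitution $x=u+kt$, $y=lt$ (the Jacobian factor $l^{-d}$ absorbing all multiplicity issues). Your plan instead discretizes $A$ at an $A$-dependent scale $1/M$, proves a multidimensional supersaturation (Varnavides) statement for the grid set $S$, and lifts each discrete progression to a box of $(x,y)$ of measure $\asymp M^{-2d}$; the discretization and lifting steps are correct as you set them up (the choice $\varepsilon\le 2^{-d-1}n^{-1}$ and the disjointness of the lifted boxes both check out), and the final bookkeeping with \eqref{eq:roth3}--\eqref{eq:szemeredi} is fine.

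The genuine gap is in the supersaturation step, and it is visible already at $d=1$, which the proposition includes. Your count is (number of primitive directions $\mathbf{v}$) $\times$ ($\gtrsim\delta^{O(1)}M^d/N$ progressions per direction), and the crucial factor $V^d\asymp(\delta^2M/N)^d$ comes entirely from the cardinality of the set of $\mathbb{Z}$-primitive $\mathbf{v}\in\{1,\dots,V\}^d$. For $d=1$ the only primitive positive step is $v=1$, so your scheme produces only $\gtrsim\delta M/N$ progressions instead of the claimed $\gtrsim\delta^{O(1)}M^{2d}N^{-(d+1)}$; after lifting, the lower bound is $\asymp\delta/(MN)$, which tends to $0$ as $M\to\infty$ --- and $M$ must be taken large depending on $A$, so no uniform constant survives. (Your own closing remark that axis-parallel steps give only $\asymp M^{d+1}$ progressions highlights the problem: for $d=1$ that is precisely the required $M^{2d}$, yet the primitive-direction count does not even reach it.) The repair is standard but has to be made: allow \emph{all} steps $\mathbf{v}\in\{1,\dots,V\}^d$ and control the overcounting --- an $n$-term progression of difference $\mathbf{b}$ found inside a block of step $\mathbf{v}$ forces $\mathbf{b}=m\mathbf{v}$ with $1\le m\lesssim N$, so a fixed progression arises from at most $O(N)$ pairs (direction, block); dividing the dense-block count by $N$ restores the $M^{2d}$ supersaturation at the cost of one extra power of $N(n,c\delta)$, which is harmless for the stated single/double-exponential bounds. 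With that modification (and with the length/density pigeonholing along maximal progressions written out, which you only sketch), your argument does prove the proposition, with a constant of the same shape as the paper's $\delta^{d+1}N(n,\delta/4)^{-d-2}$.
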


Proof of the above bound simply uses the continuous variant of the trick by Varnavides \cite{Var59:dens}. We repeat that argument in Section~\ref{sec:structuredpart}; see Lemma~\ref{lemma:lowerbound}. A very similar deduction can be found in \cite[Lemma~3.2]{DKR17:corner}.
It is less important for the present paper but also interesting to remark that, conversely, Proposition~\ref{prop:contszemeredi} (even when specialized to any fixed dimension $d\geq 1$) implies Szemer\'{e}di's theorem with bounds of the form \eqref{eq:roth3}--\eqref{eq:szemeredi}; see Remark~\ref{rem:szemerediequiv} in Section~\ref{sec:structuredpart}.
Let us also mention that Candela and Sisask \cite{CS13:torus} formulate Proposition~\ref{prop:contszemeredi} as Szemer\'{e}di's theorem on the torus, while Candela, Szegedy, and Vena \cite{CSV16:abelian} generalize it to the setting of a general compact abelian group. However, those papers do not discuss explicit dependence of the right hand side of the above estimate on the parameter $\delta$. In fact, \cite{CSV16:abelian} shows that the right hand side can be made independent of the dimension $d$, but at the same time it relies on techniques that give at least a tower-type dependence on $\delta$.

One is naturally led to study the set of possible gaps of $n$-term arithmetic progressions in a set $A\subseteq [0,1]^d$, namely
\[ \textup{gaps}_n(A) := \big\{ y\in[-1,1]^d : (\exists x\in[0,1]^d) \big(x, x+y, x+2y,\ldots, x+(n-1)y \in A\big) \big\}. \]
If $A$ has strictly positive measure, then modifying the argument of Stromberg \cite{S72:Steinhaus} one easily sees that $\textup{gaps}_n(A)$ contains a ball around the origin. Indeed, by regularity of the Lebesgue measure, one can assume that $A$ is compact and find an open set $U$ that contains $A$ and has measure at most $(1+1/2n)|A|$. Knowing that the distance between compact set $A$ and disjoint closed set $\mathbb{R}^d\setminus U$ is nonzero, we can define $\varepsilon:=\textup{dist}(A,\mathbb{R}^d\setminus U)/n$. Then for every $y\in[-1,1]^d$ satisfying $\|y\|_{\ell^2}<\varepsilon$ we have that intersection
\[ A\cap(A-y)\cap\cdots\cap(A-(n-1)y) \]
is still contained in $U$ and occupies at least half of $U$, so in particular it is nonempty. Taking any point $x$ from this intersection we arrive at an $n$-term progression with gap $y$ belonging entirely to the set $A$.
It is important to remark that this argument does not give any lower bound on the radius $\varepsilon$ of the ball contained in $\textup{gaps}_n(A)$ that would depend only on the measure of $A$. Such a bound is, in fact, impossible, due to the following variant of a counterexample by Bourgain \cite{B86:roth}. It disproves an even weaker plausible statement and it already applies to the case of three-term progressions.

Let us measure gaps in the $\ell^2$-norm, which is the usual Euclidean norm. In other words, to each $A\subseteq[0,1]^d$ we associate the set
\[ \textup{$\ell^2$-gaps}_n(A) := \big\{ \lambda\in[0,\infty) : \exists (x,y) \big(x, x+y,\ldots, x+(n-1)y \in A \text{ and } \|y\|_{\ell^2}=\lambda\big) \big\}. \]
It is natural to ask if the set $\textup{$\ell^2$-gaps}_n(A)$ contains an interval of length depending only on $n$, $d$, and the measure $|A|$ of a positive-measure set $A\subseteq[0,1]^d$, but the answer is negative.
We can rescale a construction by Bourgain \cite{B86:roth} by a factor $0<\varepsilon\leq 1$ and consider a union of annuli,
\[ A := \big\{ x\in[0,1]^d : (\exists m\in\mathbb{Z}) \big( m-1/10 < \|\varepsilon^{-1}x\|_{\ell^2}^2 < m+1/10 \big) \big\}. \]
By the parallelogram law,
\[ \|x\|_{\ell^2}^2 - 2\|x+y\|_{\ell^2}^2 + \|x+2y\|_{\ell^2}^2 = 2\|y\|_{\ell^2}^2, \]
a gap $y$ of any $3$-term progression $x,x+y,x+2y$ inside $A$ satisfies $m-2/5<2\|\varepsilon^{-1}y\|_{\ell^2}^2<m+2/5$ for some integer $m$, so any interval contained in the set $\textup{$\ell^2$-gaps}_3(A)$ is necessarily shorter than $\varepsilon$. On the other hand, by adding volumes of the annuli comprising $A$, it is easy to see that measure of $A$ is bounded from below uniformly in $\varepsilon$.

A remarkable way out of this apparent dead end was suggested by Cook, Magyar, and Pramanik \cite{CMP15:roth}, who studied $3$-term progressions and started measuring gaps in other $\ell^p$-norms. For any $p\in[1,\infty)$ we can define
\[ \textup{$\ell^p$-gaps}_n(A) := \big\{ \lambda\in[0,\infty) : (\exists x,y) \big(x, x+y,\ldots, x+(n-1)y \in A \text{ and } \|y\|_{\ell^p}=\lambda\big) \big\}. \]
A direct consequence of \cite[Theorem~2.2]{CMP15:roth} is that, for $p\neq 1,2$, sufficiently large $d$, and $\delta>0$, any measurable set $A\subseteq[0,1]^d$ of measure $|A|\geq\delta$ leads to an interval $I\subseteq\textup{$\ell^p$-gaps}_3(A)$ having length depending only on $p$, $d$, and $\delta$. Our goal is to generalize this result to arithmetic progressions of length $n\geq 4$; see Theorem~\ref{thm:maintheorem} below. In order to arrive at the correct formulation, one can first modify Bourgain's construction as it was done by Rimani\'{c} and the present authors \cite[Proposition~4.1]{DKR17:corner}. Namely, for integers $n\geq 3$, $1\leq p\leq n-1$, $d\geq 1$, and for a number $0<\varepsilon\leq 1$ we can define
\[ A := \big\{ x\in[0,1]^d : (\exists m\in\mathbb{Z}) \big( m-2^{-p-2} < \|\varepsilon^{-1}x\|_{\ell^p}^p < m+2^{-p-2} \big) \big\}. \]
Once again, it is easy to verify that measure of $A$ is bounded from below uniformly in $\varepsilon$, but any interval contained in $\textup{$\ell^p$-gaps}_n(A)$ must have length smaller than $\varepsilon$; see \cite{DKR17:corner} for details.

Now we formulate the main result of this paper. It only applies when the ambient space has sufficiently large dimension $d$, in perfect analogy with the results of Cook, Magyar, and Pramanik \cite{CMP15:roth}. For that reason we introduce the following dimensional threshold, depending on $n$ and $p$,
\begin{equation}\label{eq:dimthreshhold}
D(n,p) := 2^{n+3}(n+p).
\end{equation}

\begin{theorem}\label{thm:maintheorem}
For every integer $n\geq 3$, exponent $p\in[1,\infty)\setminus\{1,2,\ldots,n-1\}$, and dimension $d\geq D(n,p)$ there exists a finite positive constant $C(n,p,d)$ with the following property: if $0<\delta\leq 1/2$ and $A\subseteq[0,1]^d$ is a measurable set with the Lebesgue measure at least $\delta$, then the set $\textup{$\ell^p$-gaps}_n(A)$ contains an interval of length at least
\[ \begin{cases} \big(\exp(\exp(\delta^{-C(n,p,d)}))\big)^{-1} & \text{when } 3\leq n\leq 4, \\ \big(\exp(\exp(\exp(\delta^{-C(n,p,d)})))\big)^{-1} & \text{when } n\geq 5. \end{cases} \]
\end{theorem}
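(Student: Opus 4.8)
The plan is to follow the now-standard density-increment-free strategy of Cook--Magyar--Pramanik, splitting the relevant multilinear averaging operator into a \emph{structured part}, which inherits a quantitative lower bound from the continuous Szemer\'{e}di theorem (Proposition~\ref{prop:contszemeredi}), and an \emph{error part}, which is controlled by cancellation. Concretely, fix $\delta$ and $A$ with $|A|\geq\delta$, fix a lacunary-type scale parameter, and for a smooth bump $\psi$ supported near a sphere of radius $\lambda$ in the $\ell^p$-metric consider the multilinear form
\[
\Lambda_\lambda(A) := \int_{[0,1]^d}\int_{\mathbb{R}^d} \Big(\prod_{i=0}^{n-1}\mathbbm{1}_A(x+iy)\Big)\,\psi_\lambda(y)\,\textup{d}y\,\textup{d}x,
\]
together with a companion form $\Lambda^{0}_\lambda$ in which $\psi_\lambda$ is replaced by a solid bump on the $\ell^p$-ball of radius $\lambda$. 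The goal is to show $\Lambda_\lambda(A)>0$ for \emph{every} $\lambda$ in a suitable dyadic window $[\lambda_0,2\lambda_0]$ whose position and length are controlled only by $n$, $p$, $d$, $\delta$; positivity of $\Lambda_\lambda(A)$ exhibits a genuine $n$-term progression in $A$ with $\|y\|_{\ell^p}=\lambda$, hence an interval inside $\textup{$\ell^p$-gaps}_n(A)$.

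The main work is to compare $\Lambda_\lambda$ with the solid average $\Lambda^0_{\lambda}$ (or with a smooth multiscale average), for which I would do a Fourier-analytic decomposition of the difference along the lines of the "arithmetic regularity / $U^k$-type" splitting adapted to the continuous setting: write $\mathbbm{1}_A = f_{\mathrm{str}} + f_{\mathrm{unif}}$ where $f_{\mathrm{str}}$ is a suitable smoothing of $\mathbbm{1}_A$ at a frequency scale dictated by $\lambda$, and expand the multilinear form multilinearly into $2^n$ pieces. The all-structured piece is handled by Proposition~\ref{prop:contszemeredi}, after checking that smoothing changes the average by a controlled amount; this is where the $\exp\exp$ (resp.\ $\exp\exp\exp$) bounds enter and where the final dependence on $\delta$ is inherited as a black box. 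Every piece containing at least one $f_{\mathrm{unif}}$ factor must be shown to be small, and this is the analytic heart: one must prove a \emph{power-type cancellation estimate} for the associated multilinear operator, which after a change of variables resembles a multilinear Hilbert transform along the moment curve $(y,2y,\dots,(n-1)y)$ localized to the $\ell^p$-sphere. The key point — and the reason $p\notin\{1,2,\dots,n-1\}$ is needed — is that the phase arising from the $\ell^p$-norm is genuinely curved and non-degenerate in exactly those cases, so stationary-phase / oscillatory-integral estimates give a gain of a small positive power of the frequency scale; the hypothesis $d\geq D(n,p)=2^{n+3}(n+p)$ is precisely what is needed to make the Hausdorff--Young / interpolation losses in this $d$-dimensional oscillatory estimate be beaten by the gain. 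I expect this oscillatory-integral/multilinear-cancellation step, and the bookkeeping that converts a single power gain into control of the whole dyadic family of $\lambda$'s simultaneously (so that one gets an honest \emph{interval} of gaps, not merely a single value), to be the principal obstacle.

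Once the power-type bound $|\Lambda_\lambda(A)-\Lambda^0_\lambda(A)|\lesssim R^{-c}$ is in hand (with $R$ the frequency cutoff, $c=c(n,p,d)>0$), I would choose $R$ a large but explicit power of the Szemer\'{e}di lower bound from Proposition~\ref{prop:contszemeredi} so that the error is, say, half of that lower bound; this forces $\Lambda_\lambda(A)>0$. Tracking the quantitative choices: the solid average $\Lambda^0_\lambda(A)$ is bounded below by the Szemer\'{e}di quantity, which is $(\exp\exp\delta^{-C})^{-1}$ or $(\exp\exp\exp\delta^{-C})^{-1}$; the error term costs one extra exponential because $R$ is (roughly) the reciprocal of that quantity raised to a power $C(n,p,d)$, and $\lambda$ itself — i.e.\ the location and width of the window of admissible gap lengths — is comparable to a power of $1/R$, producing exactly the stated $(\exp\exp\delta^{-C(n,p,d)})^{-1}$ and $(\exp\exp\exp\delta^{-C(n,p,d)})^{-1}$ bounds on the length of the interval. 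The cube byproduct mentioned in the abstract follows from the same scheme with the progression configuration replaced by the $2^n$ vertices of an axis-parallel box, using the corresponding corners-type continuous density theorem in place of Proposition~\ref{prop:contszemeredi}.
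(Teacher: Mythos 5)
Your high-level trichotomy (a structured part fed by Szemer\'{e}di as a black box, cancellation for the remainder, then a window of admissible scales) is the right philosophy, but two of your central steps do not survive contact with the actual difficulty when $n\geq 4$. First, you decompose the set side, $\mathbbm{1}_A=f_{\mathrm{str}}+f_{\mathrm{unif}}$ with $f_{\mathrm{unif}}$ a high-frequency part, and assert that every piece containing $f_{\mathrm{unif}}$ is small by stationary phase. A linear frequency cutoff gives no such control for long progressions: the count is governed by Gowers $\textup{U}^{n-1}$-type uniformity of $f_{\mathrm{unif}}$, and a function can live at high frequencies while correlating with quadratic (or higher) phases, hence have large higher-order uniformity norms, so those pieces need not be small at all. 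The paper never splits $\mathbbm{1}_A$; it splits the measure, writing $\mathcal{N}^0_\lambda=\mathcal{N}^1_\lambda+(\mathcal{N}^\varepsilon_\lambda-\mathcal{N}^1_\lambda)+(\mathcal{N}^0_\lambda-\mathcal{N}^\varepsilon_\lambda)$, and the oscillatory input (which is where $p\notin\{1,\dots,n-1\}$ and $d\geq D(n,p)$ actually enter) is used to show that the $\textup{U}^n$-norm of the difference of smoothed spherical measures is small, transferred to the counting form by a generalized von Neumann inequality (Lemmata~\ref{lemma:CMPinduction}--\ref{lemma:Unnormsigma}), yielding $|\mathcal{N}^0_\lambda-\mathcal{N}^\varepsilon_\lambda|\lesssim_{n,p,d}\varepsilon^{1/3}$; the cancellation lives entirely on the measure side, with $\mathbbm{1}_A$ left intact.

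Second, your key quantitative claim, a per-scale bound $|\Lambda_\lambda(A)-\Lambda^0_\lambda(A)|\lesssim R^{-c}$ uniform in $\lambda$, is essentially unobtainable with current technology: for $n\geq 4$ single-scale control of the error term is tied to the open problem of bounding multilinear Hilbert transforms, which is exactly why Problem~\ref{prob:openproblem} remains open. What the paper proves instead (Proposition~\ref{prop:error}, via the inductive telescoping and Cauchy--Schwarz scheme of Lemma~\ref{lemma:telescoping}) is only an averaged statement: the sum of the errors over $J$ dyadic scales is at most $\varepsilon^{-F}J^{1-2^{-n+2}}$, i.e.\ sublinear in $J$. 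A pigeonhole over the $J$ dyadic windows then produces one interval $(2^{-j},2^{-j+1}]$ on which the error is small for \emph{every} $\lambda$ simultaneously; this step is what yields an honest interval of gap lengths, and it is the source of the extra exponential in the final bound, since the window may sit as low as $2^{-J}$. You correctly flag this multiscale bookkeeping as ``the principal obstacle,'' but your proposal supplies no mechanism for it, and with the per-scale bound unavailable the argument as written does not close. A smaller point: your structured term must be localized to gaps of size comparable to $\lambda$ at every scale, which requires re-running the Varnavides argument at scale $\lambda$ as in Lemma~\ref{lemma:lowerbound}, not merely invoking Proposition~\ref{prop:contszemeredi} and ``checking that smoothing changes the average by a controlled amount.''
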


In words, if we decide to measure gaps in the $\ell^p$-norm for any $1\leq p<\infty$ other than $1,2,\ldots,n-1$, then we have a concrete lower bound for the length of the longest interval of values attained by the numbers $\|y\|_{\ell^p}$ as the $n$-tuple $x,x+y,\ldots,x+(n-1)y$ ranges over arithmetic progressions in a fixed subset $A$ of the unit cube with measure at least $\delta$. The above estimates reflect the best known bounds in the theorems of Roth and Szemer\'{e}di. Since we will be using \eqref{eq:roth3}--\eqref{eq:szemeredi} in our proof, any possible future improvement of these bounds could lead to a quantitative improvement of Theorem~\ref{thm:maintheorem}. Otherwise, counterexamples mentioned above make Theorem~\ref{thm:maintheorem} a somewhat definite result: it handles all progression lengths $n$ and all possible values of the exponent $p$. On the other hand, the dimensional threshold $D(n,p)$ from \eqref{eq:dimthreshhold} is certainly not optimal, and clarifying in which dimensions $d$ the theorem holds comprises an interesting open problem.

We find Theorem~\ref{thm:maintheorem} a significant generalization of the aforementioned result by Cook, Magyar, and Pramanik \cite{CMP15:roth}. Indeed, the proof in \cite{CMP15:roth} uses estimates for certain multilinear singular integral operators. Their counterparts corresponding to longer progressions lead to integral operators whose boundedness is not yet confirmed. The typical examples are the multilinear Hilbert transforms, continuity of which comprises a major open problem in harmonic analysis; see \cite{T16:mht}. Instead, we will apply techniques similar to the ones used by Thiele and the present authors \cite{DKT16:splx} to establish only nontrivial, but very quantitative cancellation between different scales of these operators. Paper \cite{DKT16:splx} was, in turn, motivated by the problem of quantifying the results of Tao \cite{T16:mht} and Zorin-Kranich \cite{Z17:splx}, while its techniques can be traced back to the work of \v{S}kreb, Thiele, and the present authors \cite{DKST19:NVEA} on an unrelated problem in ergodic theory.

Analogs of Theorem~\ref{thm:maintheorem} for several other instances of patterns to which Bourgain's counterexample applies have already been studied in the previous literature, besides \cite{CMP15:roth}, which covered $3$-term arithmetic progressions. The so-called \emph{corners}, i.e.\@ patterns of the form $(x,y)$, $(x+z,y)$, $(x,y+z)$, were studied by Rimani\'{c} and the present authors \cite{DKR17:corner}. Cartesian products of $3$-term progressions or $3$-element corners were studied by the present authors in \cite{DK18:products}. However, these papers fall short of giving any positive results on $4$-term arithmetic progressions.

There also exists an extensive literature in geometric Ramsey theory on proving the corresponding results for many simpler patterns. We regard those patterns as ``simpler'' only because Bourgain's counterexample (or its modifications, such as the one by Graham \cite{Gra94:countex}) does not apply to them. The existing results measure sizes of those patterns in the Euclidean norm $\|\cdot\|_{\ell^2}$, as this canonical choice is available for them. The first studied pattern was a two-point set, for which the result was established by Furstenberg, Katznelson, and Weiss \cite{FKW90:dist} and also, independently, by Falconer and Marstrand \cite{FM86:dist}. Bourgain \cite{B86:roth} gave yet another proof and a generalization to non-degenerate simplices. For more recent related literature the reader can consult the papers \cite{HLM16:dist,LM19:distgraphs,LM16:prod,LM19:hypergraphs} and references therein. In fact, our formulation of Theorem~\ref{thm:maintheorem} was motivated by wordings of \cite[Theorem~2~(ii)]{LM19:distgraphs}, \cite[Theorem~1.1~(ii)]{LM19:hypergraphs}, and \cite[Theorem~1.2~(ii)]{LM19:hypergraphs} by Lyall and Magyar.

On the other hand, techniques used in this paper also apply to the latter type of patterns, typically giving quantitatively stronger results than those available in the current literature. We establish the following quantitative improvement of one result by Lyall and Magyar \cite[Theorem~1.1~(ii)]{LM19:hypergraphs}; its particular cases or weaker formulations have previously been confirmed by Lyall and Magyar \cite[Theorem~1.2]{LM16:prod} and the present authors \cite[Theorem~1]{DK18:products}.

\begin{theorem}\label{thm:quantifiedlm}
For every positive integer $n$ there exists a finite positive constant $C'(n)$ with the following property: if $0<\delta\leq 1/2$ and $A\subseteq([0,1]^2)^n$ is a measurable set with the Lebesgue measure at least $\delta$, then there exists an interval $I=I(n,A)\subseteq(0,1]$ of length at least
\[ \big(\exp(\delta^{-C'(n)})\big)^{-1} \]
such that for every $\lambda\in I$ one can find
$x_1,\ldots,x_n, y_1,\ldots,y_n \in\mathbb{R}^2$ satisfying
\begin{equation}\label{eq:thquantlm1}
(x_1 + r_1 y_1, x_2 + r_2 y_2, \ldots, x_n + r_n y_n) \in A
\end{equation}
for every choice of $(r_1,\ldots,r_n)\in\{0,1\}^n$ and
\begin{equation}\label{eq:thquantlm2}
\|y_i\|_{\ell^2}=\lambda
\end{equation}
for every index $i\in\{1,\ldots,n\}$.
\end{theorem}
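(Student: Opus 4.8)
Let me think about how to prove this. We have a set $A \subseteq ([0,1]^2)^n$ of measure $\delta$, and we want to find configurations of the "combinatorial cube" type: $n$ independent 2D axis pairs $(x_i, y_i)$, and for every $r \in \{0,1\}^n$ the point $(x_1 + r_1 y_1, \ldots, x_n + r_n y_n) \in A$. We want $\|y_i\|_{\ell^2} = \lambda$ for all $i$, for all $\lambda$ in an interval of length $\exp(-\delta^{-C'(n)})$.

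The natural approach: this is a counting problem. Define a multilinear form counting such configurations, with $y_i$ constrained to spheres of radius $\lambda$ in $\mathbb{R}^2$. So
\[
\mathcal{N}_\lambda(A) = \int \prod_{r \in \{0,1\}^n} \mathbbm{1}_A(x_1 + r_1 y_1, \ldots, x_n + r_n y_n) \, d\sigma_\lambda(y_1) \cdots d\sigma_\lambda(y_n) \, dx_1 \cdots dx_n,
\]
where $\sigma_\lambda$ is the (normalized) arc-length measure on the circle of radius $\lambda$ in $\mathbb{R}^2$. We want to show $\mathcal{N}_\lambda(A) > 0$ for all $\lambda$ in some interval.

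The standard strategy (going back to Bourgain, Cook–Magyar–Pramanik, Lyall–Magyar) is a decomposition of each spherical measure $\sigma_\lambda$ into a "structured" part and an "error" part at each scale. Concretely: pick a smooth bump $\phi$, let $\phi_t(z) = t^{-2}\phi(z/t)$ on $\mathbb{R}^2$, and write $\sigma_\lambda = \sigma_\lambda * \phi_\varepsilon + (\sigma_\lambda - \sigma_\lambda * \phi_\varepsilon)$. The "smoothed" piece $\sigma_\lambda * \phi_\varepsilon$ should, when plugged into all $n$ slots, give the main term; after integrating in $\lambda$ over a short interval this main term is comparable to (the continuous Szemerédi-type count for the cube pattern in $([0,1]^2)^n$), which is bounded below by a quantity of the form $\exp(-\delta^{-C(n)})$ via Proposition~\ref{prop:contszemeredi} (applied to the box pattern, or rather a product/corner version of Varnavides as in \cite[Lemma~3.2]{DKR17:corner}; actually the cube pattern here is the $\{0,1\}^n$-pattern, which is a multidimensional progression, still covered by Szemerédi). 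The "error" pieces, i.e.\ $\sigma_\lambda - \sigma_\lambda * \phi_\varepsilon$ in at least one slot, must be shown to be negligible uniformly in $\lambda$. Here the key point — what makes $\ell^2$ the "easy" exponent for this pattern — is that the Fourier transform $\widehat{\sigma_\lambda}(\xi)$ decays like $|\xi|^{-1/2}$ (stationary phase on the circle), so $\widehat{\sigma_\lambda - \sigma_\lambda * \phi_\varepsilon}$ is supported essentially on $|\xi| \gtrsim 1/\varepsilon$ and small there. Since the $n$ pairs $(x_i, y_i)$ are mutually independent (the pattern is a tensor product of $n$ copies of the two-point pattern $\{0,1\}$ in $\mathbb{R}^2$, glued through $A$), the error estimate reduces, by a Cauchy–Schwarz / $\mathrm{TT}^*$ argument slot by slot, to a single-variable oscillatory/singular integral bound — essentially boundedness of a truncated spherical maximal-type operator, or just an $L^2$ bound with $\varepsilon$-power gain. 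This is where I expect the analysis to actually happen, but it is strictly easier than for arithmetic progressions because there is no multilinear Hilbert transform obstruction: the relevant operator is (a tensor power of) a classical, well-understood one.

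So the plan, in order: (1) reduce Theorem~\ref{thm:quantifiedlm} to a lower bound for $\int_\lambda \mathcal{N}_\lambda(A)\,d\lambda$ over intervals of the stated length — actually to showing $\mathcal{N}_\lambda(A) \geq c(\delta,n) - (\text{error}(\varepsilon,\lambda))$ with error small; (2) set up the scale decomposition $\sigma_\lambda = \sigma_\lambda*\phi_\varepsilon + \psi_\varepsilon^\lambda$ and split $\mathcal{N}_\lambda(A)$ into the main term (all slots smoothed) plus $2^n - 1$ mixed terms; (3) lower-bound the main term using the continuous Szemerédi bound (Proposition~\ref{prop:contszemeredi}, via Varnavides), picking $\varepsilon$ comparable to the interval length and controlling the error introduced by replacing $\sigma_\lambda$ by $\sigma_\lambda * \phi_\varepsilon$ inside a smooth count — this costs only a fixed power of $\varepsilon$ times a $\delta$-independent constant; (4) bound each mixed error term by $C_n \varepsilon^{c_n}$, uniformly in $\lambda \in (1/2, 1]$ say, using Plancherel in the $y_i$ variables together with the decay $|\widehat{\sigma_\lambda}(\xi)| \lesssim (1+|\xi|)^{-1/2}$, reducing to a one-dimensional (per slot) estimate and iterating over the $n$ independent coordinate pairs; (5) finally choose $\varepsilon = \exp(-c\,\delta^{-C'(n)})$ so that the main term, of size $\gtrsim \exp(-\delta^{-C(n)})$ minus $O(\varepsilon^{c_n})$, dominates the sum of the $2^n$ error terms, giving $\mathcal{N}_\lambda(A) > 0$ for every $\lambda$ in the relevant interval, hence $I \subseteq \textup{$\ell^2$-gaps}$-analog for this pattern.

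The main obstacle, and where most of the work concentrates, is step (4): obtaining a \emph{quantitative power-gain} error estimate $\lesssim \varepsilon^{c_n}$ for the mixed terms, uniformly in $\lambda$, in a way that tracks the dependence of constants on $n$ but crucially is \emph{independent of $\delta$} — because it is this $\delta$-independence that lets us absorb the errors into the (tiny, $\delta$-dependent) main term by choosing $\varepsilon$ small. The technical heart is a $\mathrm{TT}^*$/Cauchy–Schwarz iteration: one isolates one "bad" slot $i$ where the measure is $\psi_\varepsilon^\lambda$, applies Cauchy–Schwarz in that $x_i$ variable to symmetrize, and is left with a kernel that is the (self-)convolution of $\psi_\varepsilon^\lambda$, whose Fourier transform is supported where $|\widehat{\sigma_\lambda}|$ is already $O(\varepsilon^{1/2})$; summing a geometric-type series over dyadic frequency scales $\gtrsim 1/\varepsilon$ yields the power gain. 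Care is needed to keep the other slots' $\mathbbm{1}_A$-factors under control (they are harmless $L^\infty$ factors after Cauchy–Schwarz), and to make the argument genuinely uniform over $\lambda$ in a compact subinterval of $(0,1]$, which is where restricting attention to, say, $\lambda \in [1/2, 1]$ and rescaling is convenient. Everything else — the Varnavides lower bound for the cube/box pattern and the smooth-count comparison — is routine and parallels \cite{DKR17:corner, DK18:products}.
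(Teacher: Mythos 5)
Your step (4) is essentially the paper's ``uniform part'' (Plancherel plus the decay $|\widehat{\sigma}(\xi)|\lesssim(1+\|\xi\|_{\ell^2})^{-1/2}$ gives $|\mathcal{N}^{0}_{\lambda}(A)-\mathcal{N}^{\varepsilon}_{\lambda}(A)|\lesssim_n\varepsilon^{1/2}$ pointwise in $\lambda$), and that part of your plan is sound. The genuine gap is in steps (3) and (5): you assume that the all-slots-smoothed main term $\mathcal{N}^{\varepsilon}_{\lambda}(A)$ has a lower bound $c(\delta,n)$ that is independent of $\varepsilon$ and uniform in $\lambda$ (you phrase this as ``replacing $\sigma_\lambda$ by $\sigma_\lambda\ast\phi_\varepsilon$ inside a smooth count costs only a fixed power of $\varepsilon$ times a $\delta$-independent constant''). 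That replacement cost is exactly the difference $\mathcal{N}^{\varepsilon}_{\lambda}(A)-\mathcal{N}^{1}_{\lambda}(A)$ between the $\varepsilon$-smoothed sphere average and the Varnavides-type solid average, and it is \emph{not} small pointwise in $\lambda$: already for $n=1$, take $A$ a union of balls of radius $r\sim\delta^{3/4}$ centered at the points of $s\mathbb{Z}^2\cap[0,1]^2$ with $s\sim\delta^{1/4}$; since squared lattice distances are integer multiples of $s^2$, the distance set of $A$ omits intervals of length $\sim\sqrt{\delta}$ inside $[1/2,1]$, so $\mathcal{N}^{0}_{\lambda}(A)=\mathcal{N}^{\varepsilon}_{\lambda}(A)=0$ for suitable $\lambda\in[1/2,1]$ no matter how small $\varepsilon$ is, while $|A|\gtrsim\delta$. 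In particular the conclusion your scheme would deliver --- that \emph{every} $\lambda$ in a fixed interval like $[1/2,1]$ works, with $I$ independent of $A$ --- is false; the interval $I$ must depend on $A$, and locating it is the actual content of the theorem. What is missing is the multiscale mechanism: an estimate on $\sum_{j=1}^J|\mathcal{N}^{\varepsilon}_{\lambda_j}(A)-\mathcal{N}^{1}_{\lambda_j}(A)|$ that is uniform (or at least sublinear) in the number of dyadic scales $J$ --- in the paper this is proved by the Gaussian heat-flow telescoping identity together with positivity of the resulting square terms --- followed by pigeonholing over $J\sim\delta^{-C}$ dyadic blocks $(2^{-j},2^{-j+1}]$ to find one block on which the error is small for all $\lambda$ simultaneously. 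This pigeonholing is precisely where the interval length $\exp(-\delta^{-C'(n)})=2^{-J}$ comes from; without it there is no way to absorb the error into the main term by shrinking $\varepsilon$ alone.

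A secondary but still consequential point: invoking Szemer\'edi/Varnavides (Proposition~\ref{prop:contszemeredi}) for the structured term is both unnecessary and quantitatively fatal here. The $\{0,1\}^n$ box pattern is controlled by iterated Cauchy--Schwarz (the box-norm inequality), giving $\mathcal{N}^{1}_{\lambda}(A)\gtrsim_n\delta^{2^n}$, a \emph{power} of $\delta$; it is this power bound, fed into the pigeonholing ($J$ polynomial in $\delta^{-1}$), that yields the single-exponential interval length claimed in the theorem. If the main term were only bounded below by an exponential-type quantity from Szemer\'edi, the resulting $J$ would be exponentially large and the interval length doubly exponential in $\delta^{-1}$, short of the stated conclusion.
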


In words, if $A$ is a positive measure subset of the $2n$-dimensional unit cube, then there exists an interval $I$ with length depending only on $n$ and $|A|$ such that for every $\lambda\in I$ the set $A$ contains vertices of an $n$-dimensional cube with side length $\lambda$. Moreover, it is sufficient to consider only cubes that have one-dimensional edges parallel to mutually orthogonal two-dimensional coordinate planes. Everything said also applies to rectangular boxes with other aspect ratios, i.e., which are not necessarily cubes. We wanted to make the formulation of Theorem~\ref{thm:quantifiedlm} as simple as possible, since it primarily serves as another, much simpler illustration of the techniques used to prove Theorem~\ref{thm:maintheorem}.
In \cite{LM19:hypergraphs} Lyall and Magyar remarked that the approach from their paper only ensures that length of the interval $I$ is at least
\[ \big(\exp(\exp(\cdots\exp(C'(n)\delta^{-3\cdot 2^n})\cdots))\big)^{-1}, \]
where the tower of exponentials has length $n$. Theorem~\ref{thm:quantifiedlm} is a quantitative refinement of their result. Comparing the single exponential bound from Theorem~\ref{thm:quantifiedlm} with double or triple exponential bounds from Theorem~\ref{thm:maintheorem}, we support the heuristics that an $n$-dimensional cube is a much simpler pattern than a $3$-term arithmetic progression.

One needs to emphasize that each of the previously cited papers simultaneously also establishes a stronger fact for subsets $A\subseteq\mathbb{R}^d$ of positive upper Banach density: that appropriately measured sizes of copies of the studied pattern within $A$ actually attain an unbounded interval of the form $[\lambda_0,\infty)$. Recall that the \emph{upper Banach density} of a measurable set $A\subseteq\mathbb{R}^d$ is defined to be the number
\[ \overline{\delta}(A) := \limsup_{R\rightarrow \infty} \sup_{x\in\mathbb{R}^d} \frac{|A\cap (x+[0,R]^d)|}{R^d} \in [0,1]. \]
Our proof of Theorem~\ref{thm:quantifiedlm} also shows this stronger property of cubes, see Remark~\ref{rem:reprovinglm} in Section~\ref{sec:quantifiedlm}, but this has already been proven by Lyall and Magyar \cite[Theorem~1.1~(i)]{LM19:hypergraphs}.
At the time of writing we are not able to address this stronger statement for arithmetic progressions of length $n\geq 4$. We use the opportunity to pose this question as an interesting open problem, even though it has already been formulated implicitly in \cite[Section~4]{DKR17:corner}.

\begin{problem}\label{prob:openproblem}
Prove or disprove that for every integer $n\geq 4$ and every exponent $p\in[1,\infty)\setminus\{1,2,\ldots,n-1\}$ there exists a number $D'(n,p)\in(0,\infty)$ such that in every dimension $d\geq D'(n,p)$ the following holds: if $A\subseteq\mathbb{R}^d$ is a measurable set satisfying $\overline{\delta}(A)>0$, then there exists $\lambda_0=\lambda_0(n,p,d,A)\in(0,\infty)$ such that for every $\lambda\in[\lambda_0,\infty)$ one can find $x,y\in\mathbb{R}^d$ satisfying $x, x+y,\ldots, x+(n-1)y \in A$ and $\|y\|_{\ell^p}=\lambda$.
\end{problem}

Difficulty of Problem~\ref{prob:openproblem} seems to be related to the lack of understanding of boundedness properties of the multilinear Hilbert transforms; see the discussion in Section~\ref{sec:errorpart}.

The paper is outlined as follows. Section~\ref{sec:notation} discusses more or less standard notation. Most of the paper, i.e., Sections~\ref{sec:mainproof}--\ref{sec:uniformpart} are dedicated to the proof of Theorem~\ref{thm:maintheorem}. Section~\ref{sec:mainproof} introduces quantities $\mathcal{N}^{\varepsilon}_{\lambda}(A)$ that ``count'' arithmetic progression of $\ell^p$-size $\lambda\in(0,\infty)$ in a set $A$, but using a measure that is smoothened or ``blurred'' up to scale $\varepsilon\in[0,1]$. Quantity $\mathcal{N}^{0}_{\lambda}(A)$ is then decomposed into a structured term $\mathcal{N}^{1}_{\lambda}(A)$, an error term $\mathcal{N}^{\varepsilon}_{\lambda}(A)-\mathcal{N}^{1}_{\lambda}(A)$, and a uniform term $\mathcal{N}^{0}_{\lambda}(A)-\mathcal{N}^{\varepsilon}_{\lambda}(A)$, somewhat in analogy with regularity lemmas in the fields of extremal graph theory and additive combinatorics; see the expository papers by Tao \cite{Tao07:exp} and Gowers \cite{Gow10:exp}. Following that principle, Section~\ref{sec:mainproof} also reduces Theorem~\ref{thm:maintheorem} to three propositions of mutually different nature. Proposition~\ref{prop:structured} handles the structured part, relying on Szemer\'{e}di's theorem and using bounds \eqref{eq:roth3}--\eqref{eq:szemeredi} as black boxes. Proposition~\ref{prop:error} takes care of the error part, proving bounds for a multilinear singular integral. Proposition~\ref{prop:uniform} controls the uniform part by bounding an oscillatory integral. These propositions are then established in Sections~\ref{sec:structuredpart}, \ref{sec:errorpart}, and \ref{sec:uniformpart}, respectively. Proposition~\ref{prop:contszemeredi} from the beginning of this introduction is also shown in Section~\ref{sec:structuredpart}, along the lines of the proof of Proposition~\ref{prop:structured}. Finally, Section~\ref{sec:quantifiedlm} gives a very short and self-sufficient proof of Theorem~\ref{thm:quantifiedlm}, as all of the above steps simplify when an $n$-term progression is replaced with vertices of an $n$-dimensional cube.

Parameters $\lambda$ and $\varepsilon$ of the counting quantity $\mathcal{N}^{\varepsilon}_{\lambda}(A)$ can be interpreted as the \emph{scale of largeness} and the \emph{scale of smoothness}, respectively. For this reason one might call the presented method of proof the \emph{largeness--smoothness multiscale approach}. That method draws inspiration from a paper by Bourgain \cite{B86:roth}, but, as we understand it, it seems to have been first used by Cook, Magyar, and Pramanik \cite{CMP15:roth}. We roughly follow the same approach, but we also prefer to make several changes to it. For instance, we use both dilations and convolutions in the definition of $\mathcal{N}^{\varepsilon}_{\lambda}(A)$, so that $\lambda$ and $\varepsilon$ can really be interpreted and treated as ``scales'' in the usual sense. We believe that this approach will also prove useful in relation to other problems in the Euclidean Ramsey theory.

As it has already been mentioned, there is an apparent similarity of our scheme of proof with the outlines of the aforementioned papers \cite{CMP15:roth,DK18:products,DKR17:corner}. However, the difficulties within the field of harmonic analysis that are related to the study of longer arithmetic progressions have not been dealt with elsewhere. Moreover, many details of the approach will be worked out differently here, enabling us to be more quantitative and also to further streamline the corresponding parts of the proof. In comparison with \cite{CMP15:roth,DK18:products,DKR17:corner} the main new ingredient of the present paper is Section~\ref{sec:errorpart}, where a multilinear singular estimate is proven. It is largely influenced by \cite{DKT16:splx}, but we made sure that this section is entirely self-contained. The whole paper does not depend on any difficult outside results, except for the bounds \eqref{eq:roth3}--\eqref{eq:szemeredi} in Szemer\'{e}di's theorem used in the proof of Theorem~\ref{thm:maintheorem}, while the proof of Theorem~\ref{thm:quantifiedlm} does not even depend on any combinatorial results.

\section{Notation}
\label{sec:notation}
Whenever $A$ and $B$ are two nonnegative expressions, we will write $A\lesssim B$ and $B\gtrsim A$ if there exists a constant $C\in[0,\infty)$ such that  $A\leq C B$. We also write $A\sim B$ if both $A\lesssim B$ and $B\lesssim A$ hold. If the implied constant $C$ depends on a set of parameters $P$, we will denote that in a subscript, writing $A\lesssim_{P} B$ and $B\gtrsim_{P} A$. The set of parameters $P$ is allowed to be empty, in which case the constant $C$ is an absolute one, or all dependencies are understood.

The \emph{standard inner product} on $\mathbb{R}^d$ will be written with a dot, i.e.
\[ x\cdot y:=\sum_{i=1}^{d}x_i y_i \]
for vectors $x=(x_1,\ldots,x_d)$ and $y=(y_1,\ldots,y_d)$ in $\mathbb{R}^d$. For any exponent $p\in[1,\infty)$ the \emph{$\ell^p$-norm} of $x$ is given by the formula
\[ \|x\|_{\ell^p} := \Big( \sum_{i=1}^{d} |x_i|^p \Big)^{1/p}. \]

The \emph{indicator function} of a set $A$ will be denoted $\mathbbm{1}_A$, while the ambient space (i.e.\@ the domain of $\mathbbm{1}_A$) will always be understood from context.
The \emph{support} of a continuous function $f\colon\mathbb{R}^d\to\mathbb{C}$ will be written as $\mathop{\textup{supp}}f$.
The greatest integer not exceeding a number $x\in\mathbb{R}$, i.e.\@ \emph{floor} of $x$, will be written $\lfloor x\rfloor$.
The imaginary unit in $\mathbb{C}$ will be denoted $\mathbbm{i}$.
The logarithm function will always have the number $e$ as its base.

The Lebesgue measure of a measurable set $A\subseteq\mathbb{R}^d$ will be written simply as $|A|$, the dimension $d$ being clear from context. Whenever the measure is not specified, it is understood that integrals are evaluated with respect to the Lebesgue measure.
Let us write $\fint_{T}$ for the integral \emph{average} over a measurable set $T$, i.e.\@ for $\frac{1}{|T|}\int_T$.

If $f$ is a complex integrable function on $\mathbb{R}^d$ and $\sigma$ is a finite measure on Borel subsets of $\mathbb{R}^d$, then we write $f_\lambda$ and $\sigma_\lambda$ for their ($\textup{L}^1$-normalized) \emph{dilates} defined as
\[ f_{\lambda}(x) := \lambda^{-d} f(\lambda^{-1}x) \]
for a number $\lambda>0$ and for each $x\in\mathbb{R}^d$, and
\[ \sigma_{\lambda}(A) := \sigma(\lambda^{-1}A) \]
for Borel sets $A\subseteq\mathbb{R}^d$.
Otherwise, lower indices that we use have no predefined meaning; the context should remove any ambiguity.
We also often use upper indices when there is no chance for confusing them with powers.

\emph{Convolution} of two integrable functions $f,g\colon\mathbb{R}^d\to\mathbb{C}$ is a function $f\ast g$ defined as
\[ (f\ast g)(x) := \int_{\mathbb{R}^d} f(x-y) g(y) \,\textup{d}y = \int_{\mathbb{R}^d} g(x-y) f(y) \,\textup{d}y \]
for almost every $x\in\mathbb{R}^d$.
If we are also given a finite Borel measure $\sigma$, then the \emph{convolution} $\sigma\ast g$
is again an almost everywhere defined function on $\mathbb{R}^d$, this time given as
\[ (\sigma\ast g)(x) := \int_{\mathbb{R}^d} g(x-y) \,\textup{d}\sigma(y). \]

The \emph{Fourier transform} of an integrable function $f\colon\mathbb{R}^d\to\mathbb{C}$ is written as $\widehat{f}$ and normalized as
\[ \widehat{f}(\xi) := \int_{\mathbb{R}^d} f(x) e^{-2\pi \mathbbm{i} x\cdot\xi} \,\textup{d}x \]
for $\xi\in\mathbb{R}^d$.
Well-known symmetries of the Fourier transform are
\[ (\widehat{f\ast g})(\xi) = \widehat{f}(\xi) \widehat{g}(\xi), \]
\[ \widehat{f_\lambda}(\xi) = \widehat{f}(\lambda \xi), \]
and
\[ \widehat{f(\cdot+y)}(\xi) = e^{2\pi\mathbbm{i}y\cdot\xi}\widehat{f}(\xi) \]
for integrable functions $f$ and $g$, $\lambda>0$, and $y\in\mathbb{R}^d$.
The \emph{Fourier transform} of a finite Borel measure $\sigma$ is defined as
\[ \widehat{\sigma}(\xi) := \int_{\mathbb{R}^d} e^{-2\pi \mathbbm{i} x\cdot\xi} \,\textup{d}\sigma(x) \]
and the property
\[ (\widehat{\sigma\ast g})(\xi) = \widehat{\sigma}(\xi) \widehat{g}(\xi) \]
remains valid for the convolution of $\sigma$ with an integrable function $g$.

\section{Quantities that detect progressions}
\label{sec:mainproof}
Progression length $n\geq 3$ will be fixed throughout the paper and so will the exponent $p\in[1,\infty)$ different from $1$, $2$, \ldots, $n-1$.
We will be working in $\mathbb{R}^d$ for a suitably large dimension $d$. Requirement $d\geq D(n,p)$, where $D(n,p)$ was given in \eqref{eq:dimthreshhold}, will be used only when needed in the proof.
Objects we are about to introduce all depend on $n$, $p$, and $d$, but, in favor of elegance, we do not always emphasize that in the notation.

Let us first recall the measure $\sigma$ introduced in \cite{CMP15:roth}. Slightly informally, $\sigma$ is given as a delta-measure $\delta(\|x\|_{\ell^p}^p-1)$, so it is a measure on the Borel subsets of $\mathbb{R}^d$ supported on the unit sphere in the $\ell^p$-norm,
\[ S:= \big\{(x_1,x_2,\ldots,x_d)\in\mathbb{R}^d : |x_1|^p+|x_2|^p+\cdots+|x_d|^p=1\big\}. \]
It can also be defined, more rigorously, as an appropriately weighted surface measure of $S$; see \cite[\S{}2.1.1]{CMP15:roth}.
Fix a $\textup{C}^\infty$ function $\psi\colon\mathbb{R}\to[0,\infty)$ supported in $[-1,1]$ and having integral $1$.
The only fact about $\sigma$ that we will need is that it is the vague limit as $\eta\to0^+$ of the measures $\sigma^\eta$ given by
\begin{equation}\label{eq:measuresigmaeta}
\textup{d}\sigma^{\eta}(x)=\psi_{\eta}(\|x\|_{\ell^p}^p-1)\,\textup{d}x.
\end{equation}
This was already commented on and used in \cite{CMP15:roth}.
In other words, for any continuous function $f\colon\mathbb{R}^d\to\mathbb{C}$ we have
\begin{equation}\label{eq:vagueconvergence}
\lim_{\eta\to0^+} \int_{\mathbb{R}^d} f(x) \,\textup{d}\sigma^{\eta}(x) = \int_{\mathbb{R}^d} f(x) \,\textup{d}\sigma(x).
\end{equation}

The most important object associated with a measurable set $A\subseteq[0,1]^d$ and a number $\lambda\in(0,\infty)$ is the following quantity that ``counts'' $n$-term progressions in $A$ with gaps $y$ of length precisely $\|y\|_{\ell^p}=\lambda$. It is defined as
\begin{equation}\label{eq:n0definition}
\mathcal{N}^{0}_{\lambda}(A) := \int_{\mathbb{R}^d} \int_{\mathbb{R}^d} \prod_{i=0}^{n-1} \mathbbm{1}_{A}(x+i y) \,\textup{d}\sigma_{\lambda}(y) \,\textup{d}x;
\end{equation}
number $0$ in the upper index will be justified in a moment.
In analogy with the paper by Cook, Magyar, and Pramanik \cite{CMP15:roth}, we will also need to introduce a certain ``smoothened'' variant of \eqref{eq:n0definition}. Let us fix an even $\textup{C}^\infty$ function $\varphi\colon\mathbb{R}^d\to[0,\infty)$, whose integral is equal to $1$, and which is strictly positive on $[-2,2]^d$ and zero outside $[-3,3]^d$. Dependencies of any future constants and other objects on $\varphi$ and $\psi$ will be notationally suppressed. Now for each $\varepsilon\in(0,1]$ define
\begin{equation}\label{eq:nepsdefinition}
\mathcal{N}^{\varepsilon}_{\lambda}(A) := \int_{\mathbb{R}^d} \int_{\mathbb{R}^d} \Big( \prod_{i=0}^{n-1} \mathbbm{1}_{A}(x+i y) \Big) (\sigma_{\lambda}\ast\varphi_{\varepsilon\lambda})(y) \,\textup{d}y \,\textup{d}x.
\end{equation}
For fixed $A$ and $\lambda$ we have
\begin{equation}\label{eq:nconvergence}
\lim_{\varepsilon\to0^+} \mathcal{N}^{\varepsilon}_{\lambda}(A) = \mathcal{N}^{0}_{\lambda}(A).
\end{equation}
Indeed, denote
\begin{equation}\label{eq:Fdefinedasaproduct}
\mathcal{F}(x,y):=\prod_{i=0}^{n-1}\mathbbm{1}_{A}(x+i y)
\end{equation}
and observe that the integral
\begin{equation}\label{eq:Fdefinedasaproductnewq}
f(y) := \int_{[0,1]^d} \mathcal{F}(x,y) \,\textup{d}x
\end{equation}
defines a continuous function $f\colon\mathbb{R}^d\to[0,1]$. This is an easy consequence of continuity of translation operators on $\textup{L}^1(\mathbb{R}^d)$.
Using Fubini's theorem we can rewrite
\[ \mathcal{N}^{\varepsilon}_{\lambda}(A) = \int_{\mathbb{R}^d} \big(f\ast\varphi_{\varepsilon\lambda}\big)(y) \,\textup{d}\sigma_{\lambda}(y). \]
Since $(f\ast\varphi_{\varepsilon\lambda})(y)$ converges to $f(y)$ as $\varepsilon\to0^+$ for every point $y\in\mathbb{R}^d$, property \eqref{eq:nconvergence} is confirmed by applying the dominated convergence theorem with respect to the finite measure $\sigma_{\lambda}$.

Number $\lambda$ is the scale of ``largeness;'' it simply restricts our attention to arithmetic progressions with gaps of size $\lambda$. On the other hand, we interpret $\varepsilon$ as the scale of ``smoothness,'' the main intuition being that \eqref{eq:nepsdefinition} range from $\mathcal{N}^{1}_{\lambda}(A)$, which is easier to estimate from below, to $\mathcal{N}^{0}_{\lambda}(A)$, which is the actual counting expression that we care about.
In line with that reasoning we will fix a sufficiently small $\varepsilon$ and split
\begin{equation}\label{eq:splitting}
\mathcal{N}^{0}_{\lambda}(A)
= \mathcal{N}^{1}_{\lambda}(A)
+ \big(\mathcal{N}^{\varepsilon}_{\lambda}(A)-\mathcal{N}^{1}_{\lambda}(A)\big)
+ \big(\mathcal{N}^{0}_{\lambda}(A)-\mathcal{N}^{\varepsilon}_{\lambda}(A)\big).
\end{equation}
We will put effort into estimating each of the three terms on the right hand side of \eqref{eq:splitting}. This will be done in the following three propositions.

\begin{proposition}\label{prop:structured}
There exists a constant $E=E(n,p,d)\in[1,\infty)$ with the following property: if $\lambda\in(0,1]$, $\delta\in(0,1/2]$, and $A\subseteq[0,1]^d$ is a measurable set with $|A|\geq\delta$, then
\[ \mathcal{N}^{1}_{\lambda}(A) \geq \begin{cases} \big(\exp(\delta^{-E})\big)^{-1} & \text{when } 3\leq n\leq 4, \\ \big(\exp(\exp(\delta^{-E}))\big)^{-1} & \text{when } n\geq 5. \end{cases} \]
\end{proposition}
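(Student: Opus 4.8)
\textbf{Proof plan for Proposition~\ref{prop:structured}.}

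The plan is to show that $\mathcal{N}^{1}_{\lambda}(A)$, which is the average $\int (f\ast\varphi_{\lambda})(y)\,\textup{d}\sigma_{\lambda}(y)$ of the progression-counting density $f(y)=\int_{[0,1]^d}\prod_{i=0}^{n-1}\mathbbm{1}_A(x+iy)\,\textup{d}x$ against the smoothed, dilated sphere measure, is bounded below by a quantity of the form furnished by the continuous Szemer\'{e}di theorem (Proposition~\ref{prop:contszemeredi}). The point of working with $\mathcal{N}^1$ rather than $\mathcal{N}^0$ is precisely that the measure $\sigma_\lambda\ast\varphi_\lambda$ is absolutely continuous with a smooth, everywhere-controlled density on a fixed-size region, so it can be compared to genuine Lebesgue integration over a cube (or a fixed ball), where Proposition~\ref{prop:contszemeredi} applies directly. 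The main step is therefore a pointwise lower bound $(\sigma_\lambda\ast\varphi_\lambda)(y)\gtrsim_{n,p,d}\lambda^{-d}\mathbbm{1}_{Q}(\lambda^{-1}y)$ for a suitable fixed cube or ball $Q\subseteq\mathbb{R}^d$ of positive measure contained in, say, $[-C,C]^d$, uniformly in $\lambda\in(0,1]$.

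First I would establish that lower bound on the density. By definition of dilates, $(\sigma_\lambda\ast\varphi_\lambda)(y)=\lambda^{-d}(\sigma\ast\varphi)(\lambda^{-1}y)$, so it suffices to bound $(\sigma\ast\varphi)(z)$ from below on a fixed-size set. Since $\sigma$ is the vague limit of $\sigma^\eta$ with $\textup{d}\sigma^\eta(x)=\psi_\eta(\|x\|_{\ell^p}^p-1)\,\textup{d}x$, and $\varphi$ is continuous, $(\sigma\ast\varphi)(z)=\int\varphi(z-x)\,\textup{d}\sigma(x)$; because $\varphi$ is strictly positive on $[-2,2]^d$ and $\sigma$ is a nonzero measure supported on the $\ell^p$ unit sphere $S$, for any $z$ in a neighborhood of the origin (say $\|z\|_{\ell^\infty}\le 1$) the integrand $\varphi(z-x)$ is bounded below by a fixed positive constant on a fixed portion of $S$ (any point of $S$ lies in $[-1,1]^d$, hence $z-x\in[-2,2]^d$), giving $(\sigma\ast\varphi)(z)\geq c(n,p,d)>0$ for all such $z$. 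I would take $Q$ to be a cube around the origin on which this holds, rescale, and conclude $(\sigma_\lambda\ast\varphi_\lambda)(y)\geq c(n,p,d)\lambda^{-d}$ whenever $\lambda^{-1}y\in Q$.

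Next, combine this with the definition of $\mathcal{N}^1$:
\[
\mathcal{N}^{1}_{\lambda}(A)=\int_{\mathbb{R}^d}f(y)\,(\sigma_\lambda\ast\varphi_\lambda)(y)\,\textup{d}y
\geq c(n,p,d)\,\lambda^{-d}\int_{\lambda Q}f(y)\,\textup{d}y
= c(n,p,d)\,|Q|\fint_{\lambda Q}f(y)\,\textup{d}y,
\]
and the remaining task is to bound $\fint_{\lambda Q}f$ from below. Here one either invokes Proposition~\ref{prop:contszemeredi} after an affine change of variables mapping $\lambda Q$ (possibly a slightly smaller subcube of it, disjoint from the boundary effects of $[0,1]^d$) into a reference cube and restricting $x$ to a suitable subcube of $[0,1]^d$ so that all $n$ points $x+iy$ remain inside $[0,1]^d$ — losing only a dimension- and $\lambda$-independent constant factor — or, more cleanly, one reruns the Varnavides-type argument directly (as the proposition says, ``along the lines of the proof of Proposition~\ref{prop:structured}''). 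Either way the output is a lower bound of the stated double/triple... no — of the stated single/double exponential shape in $\delta^{-E}$, with $E=E(n,p,d)$ absorbing the constants $c(n,p,d)$, $|Q|$, and the exponent $C(n,d)$ from Proposition~\ref{prop:contszemeredi}.

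The step I expect to be the main (though still routine) obstacle is the careful bookkeeping around the boundary of $[0,1]^d$: because $y$ ranges over a set of size $\sim\lambda$ and $\lambda$ can be close to $1$, one cannot naively translate $x$ by $iy$ for $i$ up to $n-1$ and stay in the cube. The fix is standard — shrink to a subcube $[0,1-n\lambda']^d$ for an appropriate $\lambda'\lesssim\lambda$, or note that Proposition~\ref{prop:contszemeredi} is applied at a fixed scale after rescaling so the loss is uniform — but it must be done so that the resulting constant genuinely depends only on $n,p,d$ and the $\delta$-dependence is exactly that inherited from \eqref{eq:roth3}--\eqref{eq:szemeredi}. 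A minor secondary point is justifying the pointwise lower bound on $(\sigma\ast\varphi)$ rigorously from vague convergence rather than from an explicit surface-measure description, but this follows immediately from \eqref{eq:vagueconvergence} applied to the nonnegative continuous function $x\mapsto\varphi(z-x)$ together with the fact that $\sigma(S)>0$.
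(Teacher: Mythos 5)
Your plan follows the paper's proof in its essentials: the pointwise bound $(\sigma\ast\varphi)(z)\geq\sigma(S)\min_{[-2,2]^d}\varphi>0$ on a fixed cube (hence $\sigma_\lambda\ast\varphi_\lambda\gtrsim_{p,d}\lambda^{-d}\mathbbm{1}_{[0,\lambda]^d}$) is exactly the paper's first step, and your ``rerun the Varnavides-type argument directly'' option is exactly the paper's second step: its Lemma~\ref{lemma:lowerbound} is stated with the scale $\lambda$ built in and bounds $\lambda^{-d}\int_{[0,\lambda]^d}\int_{[0,1]^d}\prod_i\mathbbm{1}_A(x+iy)\,\textup{d}x\,\textup{d}y\gtrsim_d\delta^{d+1}N(n,\delta/4)^{-d-2}$, after which \eqref{eq:roth3}--\eqref{eq:szemeredi} give the stated exponentials with everything absorbed into $E$.

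One caveat: your other suggested completion --- ``invoke Proposition~\ref{prop:contszemeredi} after an affine change of variables mapping $\lambda Q$ into a reference cube, losing only a constant factor'' --- does not work as stated. Proposition~\ref{prop:contszemeredi} lower-bounds the count of progressions whose gaps range over the whole unit cube, whereas you need a lower bound for the normalized count with gaps confined to a cube of side $\sim\lambda$; no change of variables in $y$ alone converts one into the other, since $A$ is not rescaled along with $y$. To make that route work you would have to partition $[0,1]^d$ into $\lambda$-subcubes, pigeonhole to find $\gtrsim\delta\lambda^{-d}$ subcubes on which $A$ has density $\geq\delta/2$, rescale each such subcube to $[0,1]^d$ and apply Proposition~\ref{prop:contszemeredi} there, then sum --- which is really just another incarnation of the Varnavides argument. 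Relatedly, the boundary bookkeeping you flag is not handled in the paper by shrinking to $[0,1-n\lambda']^d$: in Lemma~\ref{lemma:lowerbound} the discrete progression gaps $t$ are taken of size $\sim\delta\lambda/(dN)$ with base points $u\in[0,1-\vartheta]^d$, $\vartheta=\delta/4d$, so the whole $N$-term progression automatically stays inside $[0,1]^d$ and the resulting gap $y=lt$ lands in $[0,\lambda]^d$, matching the region where the density bound on $\sigma_\lambda\ast\varphi_\lambda$ holds. With your second option carried out this way, the proof is correct and coincides with the paper's.
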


\begin{proposition}\label{prop:error}
There exists a constant $F=F(n,p,d)\in[1,\infty)$ with the following property: for a positive integer $J$, any real numbers $\lambda_j\in(2^{-j},2^{-j+1}]$; $j=1,2,\ldots,J$, any $\varepsilon\in(0,1/2]$, and any measurable set $A\subseteq[0,1]^d$ we have
\[ \sum_{j=1}^{J} \big|\mathcal{N}^{\varepsilon}_{\lambda_j}(A)-\mathcal{N}^{1}_{\lambda_j}(A)\big| \leq \varepsilon^{-F} J^{1-2^{-n+2}}. \]
\end{proposition}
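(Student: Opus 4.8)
The plan is to realize $\mathcal{N}^{\varepsilon}_{\lambda_j}(A)-\mathcal{N}^{1}_{\lambda_j}(A)$ as an $n$-linear form applied to $\mathbbm{1}_A$ with a kernel that behaves like a Littlewood--Paley projection at frequency scale $\lambda_j^{-1}$, and then to prove the asserted power-type cancellation for the sum over the scales $j=1,\dots,J$ by an iterated Cauchy--Schwarz (``Gowers box'') argument, following the scheme of \cite{DKT16:splx}. Put
\[ \mu_j := \sigma_{\lambda_j}\ast\varphi_{\varepsilon\lambda_j}-\sigma_{\lambda_j}\ast\varphi_{\lambda_j}, \]
so that, with $\mathcal{F}$ as in \eqref{eq:Fdefinedasaproduct},
\[ \mathcal{N}^{\varepsilon}_{\lambda_j}(A)-\mathcal{N}^{1}_{\lambda_j}(A)=\int_{\mathbb{R}^d}\int_{\mathbb{R}^d}\mathcal{F}(x,y)\,\mu_j(y)\,\textup{d}y\,\textup{d}x, \]
and, after choosing signs $\epsilon_j\in\{-1,+1\}$ and setting $m:=\sum_{j=1}^{J}\epsilon_j\mu_j$, the problem reduces to bounding $\big|\int\int\mathcal{F}(x,y)\,m(y)\,\textup{d}y\,\textup{d}x\big|$ by $\varepsilon^{-F}J^{1-2^{-n+2}}$. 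I will use that $\int_{\mathbb{R}^d}\mu_j=0$ (since $\varphi_{\varepsilon\lambda}$ and $\varphi_{\lambda}$ have integral $1$), that $\|\mu_j\|_{\textup{L}^1(\mathbb{R}^d)}\lesssim_{n,p,d}1$ uniformly in $j$ and $\varepsilon$, and that $\widehat{\mu_j}(\xi)=\widehat{\sigma}(\lambda_j\xi)\big(\widehat{\varphi}(\varepsilon\lambda_j\xi)-\widehat{\varphi}(\lambda_j\xi)\big)$. Combined with the decay estimate $|\widehat{\sigma}(\xi)|\lesssim_{n,p,d}(1+|\xi|)^{-\kappa}$ --- valid for the $\ell^p$-sphere measure with $\kappa=\kappa(n,p,d)$ as large as required once $d\ge D(n,p)$, precisely because $p\notin\{1,2,\dots,n-1\}$ and $d$ is large (stationary phase, cf.\ \cite{CMP15:roth}), this being the only place the hypothesis $d\ge D(n,p)$ enters --- the second-order vanishing of $\widehat{\varphi}(\varepsilon\lambda_j\xi)-\widehat{\varphi}(\lambda_j\xi)$ near $\xi=0$, and the rapid decay of $\widehat{\varphi}$ (which kills $|\lambda_j\xi|\gtrsim\varepsilon^{-1}$), one sees that each $\widehat{\mu_j}$ is a smooth bump essentially supported in $|\xi|\sim\lambda_j^{-1}$, and that the partial sums obey $\|\widehat{m}\|_{\textup{L}^\infty(\mathbb{R}^d)}\lesssim_{n,p,d}\varepsilon^{-O(1)}$ uniformly in $J$ and in the signs.

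The $n$-linear form is then estimated by downward induction on the number of linear factors. The bottom case is a bilinear form, $\sum_j\epsilon_j\int\int g_0(x)g_1(x+cy)\mu_j(y)\,\textup{d}y\,\textup{d}x$ with $\|g_0\|_\infty,\|g_1\|_\infty\le1$ and $c\neq0$: integrating out $x$ turns it into $\int\widehat{g_0}(-\xi)\widehat{g_1}(\xi)\widehat{m}(-c\xi)\,\textup{d}\xi$, which by Plancherel and the $\textup{L}^\infty$ bound on $\widehat{m}$ is $\lesssim_{n,p,d}\varepsilon^{-O(1)}\|g_0\|_2\|g_1\|_2\lesssim_{n,p,d}\varepsilon^{-O(1)}$ --- bounded uniformly in $J$, matching the exponent $1-2^{-2+2}=0$. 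For the inductive step, given an $\ell$-linear form over the $J$ scales (built from functions bounded by $1$, distinct translation parameters, and the combined kernel), a single Cauchy--Schwarz eliminates one factor (via its trivial $\textup{L}^2$ bound) at the cost of doubling the remaining $\ell-1$ factors into a box form and replacing the single scale sum by a double sum over pairs $(j,k)$; the diagonal $j=k$ of that double sum is estimated by $\lesssim_{n,p,d}\varepsilon^{-O(1)}J$ using only $\|\mu_j\|_{\textup{L}^1}\lesssim1$, while what remains is a box $(\ell-1)$-linear form over the $J$ scales of the same type. Writing $J^{a_\ell}$ for the bound this produces, one gets $2a_\ell\le 1+a_{\ell-1}$ with $a_2=0$, hence $a_\ell=1-2^{-\ell+2}$ and in particular $a_n=1-2^{-n+2}$; the powers of $\varepsilon^{-1}$ picked up over the $n-2$ steps combine into the factor $\varepsilon^{-F}$.

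The step I expect to be the main obstacle is carrying out the inductive Cauchy--Schwarz cleanly. Each application produces not a form of the original shape but a ``doubled'' Gowers-box form, which must be put back into the induction by further changes of variables and by discarding some (trivially bounded) factors; keeping track of the translation parameters, and of the $\varepsilon$-dependence so that only a fixed power of $\varepsilon^{-1}$ is lost per step, is the technical heart and is where \cite{DKT16:splx} is followed most closely. A closely related subtlety is that the Fourier localization of the $\mu_j$ must survive the changes of variables at every step, which forces $\kappa$ --- hence, through $|\widehat{\sigma}(\xi)|\lesssim(1+|\xi|)^{-\kappa}$, the dimension $d$ --- to be taken large enough to absorb the (up to $(n-2)$-fold) halving of the usable decay along the induction; this is what the explicit threshold $D(n,p)=2^{n+3}(n+p)$ is designed to guarantee.
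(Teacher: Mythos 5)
Your high-level plan (reduce to an $n$-linear form with kernel $m=\sum_j\epsilon_j\mu_j$, then iterate Cauchy--Schwarz to trade $J$ for $J^{1-2^{-n+2}}$, following \cite{DKT16:splx}) is the same skeleton the paper uses, and your exponent recursion $2a_\ell=1+a_{\ell-1}$ with $a_2=0$ reproduces the paper's exponents $1-2^{-k+1}$ in Lemma~\ref{lemma:telescoping}. However there are two genuine problems.

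First, you locate the role of the hypotheses $p\notin\{1,\ldots,n-1\}$ and $d\ge D(n,p)$ in this proposition, via Fourier decay $|\widehat{\sigma}(\xi)|\lesssim(1+|\xi|)^{-\kappa}$ and stationary phase. This is wrong: Proposition~\ref{prop:error} has \emph{no} dimensional hypothesis and holds for every $p\in[1,\infty)$; the constraints $p\notin\{1,\ldots,n-1\}$ and $d\ge D(n,p)$ appear only in Proposition~\ref{prop:uniform}, where they feed into the oscillatory integral estimate of Lemma~\ref{lemma:Unnorm1D}. The paper's proof of Proposition~\ref{prop:error} never invokes decay of $\widehat\sigma$: the only decay used is the vanishing $\widehat{\varrho}(0)=0$ (coming from $\int\varrho=0$) together with Schwartz tails of $\widehat\varrho$, $\widehat\theta$, and the Gaussians, and $\sigma$ is handled merely as a finite compactly supported measure (giving the $\varepsilon^{-N}$ moment bound). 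Your Plancherel base case therefore rests on the wrong lever.

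Second, and more importantly, you identify ``carrying out the inductive Cauchy--Schwarz cleanly'' as the main obstacle and propose a diagonal/off-diagonal split in $(j,k)$ where the diagonal costs $J$ and the off-diagonal ``is a box $(\ell-1)$-linear form of the same type.'' This is exactly where the paper's argument contains its essential new idea, and the diagonal/off-diagonal split as you describe it does not by itself close the induction: after squaring, the kernel in the gap variables is $m(y)m(y+z)$ with a genuine extra box direction $z$, and it is not of the form you started with. The paper instead (a) reinterprets the sum over scales as an integral $\int_\varepsilon^1(\cdot)\,\textup{d}t/t$ of a continuous Gaussian-built kernel, (b) uses a Gaussian telescoping identity (Lemma~\ref{lemma:Gaussians}) to write $\sum_{l,m}\Theta_{k,l,m,a,b}$ up to a controllable error $\Psi_{k,l,a,b}$ as a boundary term $\pi(\Xi_{k,a}-\Xi_{k,b})$ bounded by $\pi$, and then (c) uses the \emph{positivity} of each $\Theta_{k,l,m,a,b}$ to conclude that the one term arising from Cauchy--Schwarz, $\Theta_{k,l,k,a,b}=\mathcal T$, is individually bounded by the whole sum. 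The off-diagonal never reappears as a same-shape form; it is absorbed into $\Psi$, which by a convolution identity becomes $\widetilde\Lambda_{k-1,\cdot}$ and is controlled by induction. Without the telescoping identity and the positivity trick your recursion is aspirational rather than proved, and the step you flag as the obstacle is indeed where your argument is missing its engine.
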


\begin{proposition}\label{prop:uniform}
For every integer $d\geq D(n,p)$, every $\lambda,\varepsilon\in(0,1]$, and every measurable set $A\subseteq[0,1]^d$ we have
\begin{equation}\label{eq:propuniformineq}
\big|\mathcal{N}^{0}_{\lambda}(A)-\mathcal{N}^{\varepsilon}_{\lambda}(A)\big| \lesssim_{n,p,d} \varepsilon^{1/3}.
\end{equation}
\end{proposition}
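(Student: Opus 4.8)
The plan is to estimate the difference $\mathcal{N}^{0}_{\lambda}(A)-\mathcal{N}^{\varepsilon}_{\lambda}(A)$ by expressing it in terms of the Fourier transform of the measure $\sigma$ and exploiting its decay. First I would pass to the smoothed approximations: using the vague convergence \eqref{eq:vagueconvergence} it suffices to bound $\int \mathcal{F}(x,y)\, \bigl(\sigma^{\eta}_\lambda - \sigma^{\eta}_\lambda \ast \varphi_{\varepsilon\lambda}\bigr)(y)\,\textup{d}y\,\textup{d}x$ uniformly in small $\eta$, or equivalently to work directly with the continuous function $f$ from \eqref{eq:Fdefinedasaproductnewq} and write
\[
\mathcal{N}^{0}_{\lambda}(A)-\mathcal{N}^{\varepsilon}_{\lambda}(A) = \int_{\mathbb{R}^d} f(y)\,\textup{d}\bigl(\sigma_\lambda - \sigma_\lambda\ast\varphi_{\varepsilon\lambda}\bigr)(y).
\]
Since $f = \int_{[0,1]^d}\mathcal{F}(x,\cdot)\,\textup{d}x$ is supported in a fixed compact set (the progression $x,\dots,x+(n-1)y$ must stay in $[0,1]^d$, so $\|y\|_\infty \le 1$) and is bounded by $1$, I expect to control things by a Sobolev-type or direct Fourier argument. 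By Plancherel/Parseval it reduces to bounding $\int_{\mathbb{R}^d} \widehat{f}(\xi)\,\overline{\widehat{\sigma_\lambda}(\xi)}\bigl(1-\widehat{\varphi}(\varepsilon\lambda\xi)\bigr)\,\textup{d}\xi$ in absolute value, although $f$ is merely continuous so one must be a little careful; alternatively one writes $\sigma_\lambda - \sigma_\lambda\ast\varphi_{\varepsilon\lambda} = \sigma_\lambda \ast (\delta_0 - \varphi_{\varepsilon\lambda})$ and integrates by parts against the smoothness of $\varphi$.

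The key quantitative input is the decay estimate $|\widehat{\sigma}(\xi)| \lesssim_{n,p,d} (1+\|\xi\|_{\ell^2})^{-1/3}$, or more precisely a bound of the form $|\widehat{\sigma}(\xi)| \lesssim \|\xi\|_{\ell^2}^{-(d-1)/p'}$ or $\|\xi\|_{\ell^2}^{-1/p}$-type decay coming from the curvature of the $\ell^p$-sphere $S$ when $p \neq 1,2,\dots$ — this is exactly the oscillatory integral estimate for the $\ell^p$-sphere established in \cite{CMP15:roth}, and the exponent $1/3$ in the conclusion is a conservative choice that holds once $d \ge D(n,p)$ and $p \notin \{1,2,\dots,n-1\}$. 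Armed with this, I would split the frequency integral into $\|\xi\|_{\ell^2} \le (\varepsilon\lambda)^{-1}$ and $\|\xi\|_{\ell^2} > (\varepsilon\lambda)^{-1}$. On the low-frequency part, $|1 - \widehat{\varphi}(\varepsilon\lambda\xi)| \lesssim \varepsilon\lambda\|\xi\|_{\ell^2}$ by the mean value theorem (using $\widehat{\varphi}(0)=1$ and smoothness of $\varphi$), and one gains a factor $\varepsilon$ after integrating $\|\xi\|^{1-1/3}$ against a suitable bump; on the high-frequency part one uses rapid decay of $\widehat{\varphi}$ together with the $|\widehat{\sigma}|$-decay, again producing a power of $\varepsilon$. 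Combining and optimizing produces the claimed bound $\varepsilon^{1/3}$ (the precise power $1/3$ is not essential, any fixed positive power suffices for the application in Section~\ref{sec:mainproof}).

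The main obstacle is the regularity mismatch: $f$ is only continuous, not smooth or even Lipschitz, so a naive Parseval argument does not immediately converge, and one cannot directly integrate by parts in $y$ against $f$. I would circumvent this by keeping all the oscillation on the measure side: write everything as $\int f(y)\,(\sigma_\lambda\ast(\delta_0-\varphi_{\varepsilon\lambda}))(y)\,\textup{d}y$ and bound $\|\sigma_\lambda\ast(\delta_0-\varphi_{\varepsilon\lambda})\|$ in a suitable sense — but since $\sigma$ is a singular measure this ``function'' is again a distribution, so the cleanest route is to work with the $\eta$-regularizations $\sigma^\eta$ throughout (where $f \mapsto \int f\,\textup{d}(\sigma^\eta_\lambda - \sigma^\eta_\lambda\ast\varphi_{\varepsilon\lambda})$ is honest), obtain a bound uniform in $\eta$ via the Fourier-decay of $\widehat{\sigma^\eta}$ (which inherits the decay of $\widehat{\sigma}$ uniformly for $\eta$ small, since $\widehat{\sigma^\eta}(\xi) = \int_0^\infty \widehat{\sigma_t}(\xi)\,\widehat{\psi}(\eta(t-1))\cdots$ — more simply, $\widehat{\sigma^\eta} = \widehat{\sigma}\ast(\text{approximate identity})$ type relation gives $|\widehat{\sigma^\eta}|\lesssim$ the same thing plus a harmless correction), and then let $\eta\to 0^+$ using \eqref{eq:vagueconvergence}. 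Another minor technical point is that $\widehat{f}$, although $f$ is compactly supported and bounded, need not be integrable, so on the low-frequency portion I would argue via the already-established continuity of $f$ and an $L^\infty$ bound rather than $L^1$ of $\widehat{f}$; concretely, $|(f\ast\varphi_{\varepsilon\lambda})(y) - f(y)| \lesssim \omega_f(\varepsilon\lambda)$ where $\omega_f$ is the modulus of continuity, but since we need a power of $\varepsilon$ with no dependence on $A$, we cannot afford to use a modulus of continuity that degenerates — hence the Fourier route on the regularized objects is the safe one, and I would carry out the estimate there.
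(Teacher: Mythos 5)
Your proposal does not work as written, and the gap is fundamental rather than technical. You are trying to bound
\[
\Big| \int_{\mathbb{R}^d} f(y)\,\textup{d}\big(\sigma_\lambda-\sigma_\lambda\ast\varphi_{\varepsilon\lambda}\big)(y) \Big|
\]
by treating $f$ as a generic bounded continuous compactly supported function and leaning on Fourier decay of $\widehat{\sigma}$. But the total variation of the signed measure $\sigma_\lambda-\sigma_\lambda\ast\varphi_{\varepsilon\lambda}$ is $\sim 1$, not small in $\varepsilon$, so no bound of the form $\varepsilon^{1/3}$ can hold for all $f$ with $\|f\|_\infty\leq 1$. Concretely, if you pass to the Fourier side, the substitution $\zeta=\lambda\xi$ introduces an explicit factor $\lambda^{-d}$ in front of whatever $\lambda$-independent frequency integral remains; a priori controls like $\|\widehat{f}\|_\infty \lesssim \|f\|_{L^1}\lesssim 1$ or $\|\widehat{f}\|_{L^2}\lesssim 1$ do not cancel this factor, so the estimate fails badly as $\lambda\to 0^+$. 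One must use the specific multilinear structure $f(y)=\int\prod_{i=0}^{n-1}\mathbbm{1}_A(x+iy)\,\textup{d}x$, and the Fourier pairing you propose throws that away.

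The second, related issue is that pointwise decay of $\widehat{\sigma}$ is simply not the correct oscillatory input for $n\geq 3$. What the paper actually needs (and proves, in Lemmas~\ref{lemma:Unnorm1D} and \ref{lemma:Unnormsigma}) is an estimate on the Gowers $\textup{U}^n$-norm of $\sigma^\eta\ast\varrho_t$. For $n=2$ the $\textup{U}^2$-norm is indeed an $\textup{L}^4$-norm of the Fourier transform, so a decay bound on $\widehat{\sigma}$ would be enough; this is essentially what happens in the cube problem of Section~\ref{sec:quantifiedlm}, where \eqref{eq:FTofsigma} is the only oscillatory fact used. But for $n\geq 3$ a small $\|\widehat{g}\|_\infty$ does not imply a small $\|g\|_{\textup{U}^n}$ — that is the whole content of higher-order Fourier analysis — so the $\ell^p$-sphere Fourier decay alone cannot close the argument. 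What makes the $\textup{U}^n$-framework work here is precisely the combination of Lemma~\ref{lemma:CMPinduction}, which converts the $n$-linear counting pairing into $\lambda^{d(1-(n+1)2^{-n})}\|g\|_{\textup{U}^n}$ via iterated Cauchy--Schwarz on the product structure of $\mathcal{F}$, with the scaling identity \eqref{eq:gowersscaling}, which cancels that $\lambda$-power exactly. Your route omits both ingredients: you neither have a substitute for the $\textup{U}^n$-bound on $\sigma^\eta\ast\varrho_t$, nor a mechanism to eliminate the $\lambda$-dependence. Also note that you misattribute the key input: the relevant estimate in \cite{CMP15:roth} is their $\textup{U}^3$-norm lemma, not a pointwise $|\widehat{\sigma}(\xi)|$ decay.

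Your instincts about the technical scaffolding are sound (work with $\sigma^\eta$, use \eqref{eq:vagueconvergence} and \eqref{eq:nconvergence}, split frequencies at scale $(\varepsilon\lambda)^{-1}$, accept that $f$ is merely continuous), and the proof of Proposition~\ref{prop:uniform} does start in the same way by writing the difference as an integral of $f\ast\varrho_{t\lambda}$ against $\sigma_\lambda^\eta$. But the core of the argument — Lemma~\ref{lemma:CMPinduction} (multilinear Cauchy--Schwarz to a $\textup{U}^n$-bound), Lemma~\ref{lemma:Unnorm1D} (1D $\textup{U}^n$-oscillatory estimate exploiting $p\notin\{1,\dots,n-1\}$), and Lemma~\ref{lemma:Unnormsigma} (assembling the $d$-dimensional $\textup{U}^n$-bound via the tensor structure of $\|x\|_{\ell^p}^p=\sum_i|x_i|^p$) — is missing from the proposal and cannot be replaced by the Fourier-decay route you describe.
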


Informally speaking, $\mathcal{N}^{1}_{\lambda}(A)$ constitutes the \emph{structured part} of decomposition \eqref{eq:splitting}. It will be controlled using the known bounds \eqref{eq:roth3}--\eqref{eq:szemeredi} in Szemer\'{e}di's theorem. This is ``the main term'' in the language of \cite{CMP15:roth}. The next summand, $\mathcal{N}^{\varepsilon}_{\lambda}(A)-\mathcal{N}^{1}_{\lambda}(A)$, is an analogue of what is traditionally called the \emph{error part}; see \cite{Gow10:exp,Tao07:exp}. Starting with Bourgain \cite{B86:roth} it has often been handled by certain energy pigeonholing. The last summand $\mathcal{N}^{0}_{\lambda}(A)-\mathcal{N}^{\varepsilon}_{\lambda}(A)$ we call the \emph{uniform part}. While bounding it, one can shift attention completely from the set $A$ to the effectiveness of approximation of $\sigma$ by $\sigma\ast\varphi_{\varepsilon}$ in (the Euclidean analogue of) the Gowers uniformity norm.

Propositions~\ref{prop:structured}--\ref{prop:uniform} will be established in the three subsequent sections, while here we show how together they imply Theorem~\ref{thm:maintheorem}. This argument is far from new, appearing to some extent already in \cite{B86:roth} and written in detail in \cite{CMP15:roth}. We include it for completeness.

\begin{proof}[Proof of Theorem~\ref{thm:maintheorem}]
Take an integer $d\geq D(n,p)$ and recall the constants $E$ and $F$ from Propositions~\ref{prop:structured} and \ref{prop:error}, respectively. Let $G=G(n,p,d)$ be the constant that was implicit in inequality \eqref{eq:propuniformineq}. Take $\delta\in(0,1/2]$, denote
\[ \vartheta := \begin{cases} \big(\exp(\delta^{-E})\big)^{-1} & \text{when } 3\leq n\leq 4, \\ \big(\exp(\exp(\delta^{-E}))\big)^{-1} & \text{when } n\geq 5, \end{cases} \]
choose
\[ \varepsilon := \Big(\frac{\vartheta}{3G}\Big)^{3}, \]
and finally take
\[ J := \big\lfloor \big(3\vartheta^{-1}\varepsilon^{-F}\big)^{2^{n-2}} \big\rfloor + 1. \]
Observe
\[ J \lesssim_{n,p,d} \big(\vartheta^{-1}\big)^{2^{n-2}+3F}, \]
so that
\begin{equation}\label{eq:proofmainthm0}
2^{-J} \geq \begin{cases} \big(\exp(\exp(\delta^{-C(n,p,d)}))\big)^{-1} & \text{when } 3\leq n\leq 4, \\ \big(\exp(\exp(\exp(\delta^{-C(n,p,d)})))\big)^{-1} & \text{when } n\geq 5 \end{cases}
\end{equation}
for a sufficiently large constant $C(n,p,d)\in[1,\infty)$.

Take a measurable set $A\subseteq[0,1]^d$ such that $|A|\geq \delta$. First, we claim that there exists an index $j\in\{1,2,\ldots,J\}$ such that for every $\lambda\in(2^{-j},2^{-j+1}]$ we have
\begin{equation}\label{eq:proofmainthm1}
\big|\mathcal{N}^{\varepsilon}_{\lambda}(A)-\mathcal{N}^{1}_{\lambda}(A)\big| \leq \varepsilon^{-F} J^{-2^{-n+2}}.
\end{equation}
If that was not the case, then for each $j$ we could choose $\lambda_j\in(2^{-j},2^{-j+1}]$ such that the opposite inequality of \eqref{eq:proofmainthm1} holds. Summing in $j$ would then contradict Proposition~\ref{prop:error}.

Next, fix an index $j$ with the above property. Using splitting \eqref{eq:splitting}, Proposition~\ref{prop:structured}, Equation \eqref{eq:proofmainthm1}, Proposition~\ref{prop:uniform}, and the choices of $\varepsilon$ and $J$, for any $\lambda\in(2^{-j},2^{-j+1}]$ we can estimate
\begin{align}
\mathcal{N}^{0}_{\lambda}(A)
& \geq \mathcal{N}^{1}_{\lambda}(A)
- \big|\mathcal{N}^{\varepsilon}_{\lambda}(A)-\mathcal{N}^{1}_{\lambda}(A)\big|
- \big|\mathcal{N}^{0}_{\lambda}(A)-\mathcal{N}^{\varepsilon}_{\lambda}(A)\big| \nonumber \\
& \geq \vartheta - \varepsilon^{-F} J^{-2^{-n+2}} - G \varepsilon^{1/3} \nonumber \\
& \geq \vartheta - \vartheta/3 - \vartheta/3 = \vartheta/3. \label{eq:proofmainthm2}
\end{align}
In particular, the interval $I:=(2^{-j},2^{-j+1}]$ has length at least \eqref{eq:proofmainthm0} and for each $\lambda\in I$ we have $\mathcal{N}^{0}_{\lambda}(A)>0$, which implies that there exists a progression $x,x+y,\ldots,x+(n-1)y$ in $A$ satisfying $\|y\|_{\ell^p}=\lambda$.
\end{proof}

Observe that \eqref{eq:proofmainthm2} above gives slightly more than claimed in the formulation of Theorem~\ref{thm:maintheorem}: one has
\[ \mathcal{N}^{0}_{\lambda}(A) \gtrsim \begin{cases} \big(\exp(\delta^{-E})\big)^{-1} & \text{when } 3\leq n\leq 4, \\ \big(\exp(\exp(\delta^{-E}))\big)^{-1} & \text{when } n\geq 5 \end{cases} \]
for each scale $\lambda$ from an interval $I$ of length at least \eqref{eq:proofmainthm0}. In other words, for each $\lambda\in I$ there are ``quite a few'' progressions in $A$ with gaps of that $\ell^p$-size.

\section{The structured part: proof of Propositions~\ref{prop:contszemeredi} and \ref{prop:structured}}
\label{sec:structuredpart}
We will simultaneously prove Proposition~\ref{prop:contszemeredi}, which was mentioned in the introduction, and Proposition~\ref{prop:structured}, which is the first ingredient needed in the proof of Theorem~\ref{thm:maintheorem}. Both will be consequences of the following, slightly more general result. Recall the definition of the numbers $N(n,\delta)$ stated at the very beginning of the introductory section.

\begin{lemma}\label{lemma:lowerbound}
For $\lambda\in(0,1]$, $\delta\in(0,1/2]$, and a measurable set $A\subseteq[0,1]^d$ satisfying $|A|\geq\delta$ we have
\begin{equation}\label{eq:formlowerbound}
\frac{1}{\lambda^d} \int_{[0,\lambda]^d} \int_{[0,1]^d} \prod_{i=0}^{n-1} \mathbbm{1}_{A}(x+i y) \,\textup{d}x \,\textup{d}y
\gtrsim_d \delta^{d+1} N(n,\delta/4)^{-d-2}.
\end{equation}
\end{lemma}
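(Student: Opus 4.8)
My plan is to run a continuous analogue of Varnavides's averaging trick, feeding in the definition of $N(n,\delta/4)$ as the only external input. Abbreviate by $Q$ the left-hand side of \eqref{eq:formlowerbound}, and recall the function $f$ from \eqref{eq:Fdefinedasaproductnewq}, so that $Q=\lambda^{-d}\int_{[0,\lambda]^d}f(y)\,\textup{d}y$. Put $M:=N(n,\delta/4)$ (legitimate since $0<\delta/4\le 1/2$) and $\mu:=\delta\lambda/(8d)$, noting $0<\mu\le\lambda$. For $t\in[0,\mu/M]^d$ and $z\in[0,1]^d$ introduce the finite ``trace'' set
\[ S_{z,t}:=\{s\in\{0,1,\ldots,M-1\}:z+st\in A\}, \]
and consider the double-counting quantity
\[ T:=\int_{[0,1]^d}\fint_{[0,\mu/M]^d}\#\big\{(a,b)\in\mathbb{Z}^2:\ a\ge0,\ b\ge1,\ a+(n-1)b\le M-1,\ \{z+(a+ib)t\}_{i=0}^{n-1}\subseteq A\big\}\,\textup{d}t\,\textup{d}z. \]
Everything reduces to a lower and an upper bound for $T$.

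For the lower bound, fix $t\in[0,\mu/M]^d$. Since $A\subseteq[0,1]^d$ and $\sum_{k=1}^{d}st_k\le d\mu=\delta\lambda/8\le\delta/8$ for every $s\le M-1$, translation invariance of Lebesgue measure gives
\[ \int_{[0,1]^d}\mathbbm{1}_A(z+st)\,\textup{d}z=\big|A\cap([0,1]^d+st)\big|\ge|A|-\Big(1-\prod_{k=1}^{d}(1-st_k)\Big)\ge\delta-\delta/8, \]
and summing over $s=0,\ldots,M-1$ yields $\int_{[0,1]^d}|S_{z,t}|\,\textup{d}z\ge\delta M/2$. Markov's inequality on the probability space $[0,1]^d$ then shows that the set $G_t:=\{z\in[0,1]^d:|S_{z,t}|\ge\delta M/4\}$ has measure at least $\delta/4$. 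For $z\in G_t$ the subset $S_{z,t}$ of $\{0,\ldots,M-1\}$ has density at least $\delta/4$, so by the defining property of $M=N(n,\delta/4)$ it contains a nontrivial $n$-term progression; equivalently, the inner cardinality above is $\ge1$. Hence $T\ge\fint_{[0,\mu/M]^d}|G_t|\,\textup{d}t\ge\delta/4$.

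For the upper bound, expand the cardinality into a sum over admissible pairs $(a,b)$. For fixed $(a,b)$ and $t$, enlarging the domain of integration and then substituting $w=z+at$ gives $\int_{[0,1]^d}\prod_{i=0}^{n-1}\mathbbm{1}_A(z+(a+ib)t)\,\textup{d}z\le\int_{\mathbb{R}^d}\prod_{i=0}^{n-1}\mathbbm{1}_A(w+ibt)\,\textup{d}w=f(bt)$, the last equality because the integrand vanishes off $A\subseteq[0,1]^d$. Substituting $y=bt$ and using $b<M$ together with $f\ge0$ gives $\fint_{[0,\mu/M]^d}f(bt)\,\textup{d}t\le(M/\mu)^d b^{-d}\int_{[0,\lambda]^d}f(y)\,\textup{d}y$. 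There are at most $M^2$ admissible pairs and $b^{-d}\le1$, so summing and recalling $\int_{[0,\lambda]^d}f=\lambda^d Q$ and $\mu=\delta\lambda/(8d)$ we obtain $T\le(M/\mu)^d M^2\lambda^d Q=(8d)^d\delta^{-d}M^{d+2}Q$. Comparing the two estimates for $T$ yields $Q\ge\delta^{d+1}\big(4(8d)^d M^{d+2}\big)^{-1}$, which is \eqref{eq:formlowerbound} with an explicit $d$-dependent implied constant.

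The one point that needs care is the choice of the reduced scale $\mu=\delta\lambda/(8d)$: it is precisely what forces the translates $[0,1]^d+st$ to still occupy all but a $\delta/8$-fraction of the cube for every $s<M$, so that the Markov step produces a genuinely large family $G_t$ rather than a bound that could be vacuous. Allowing the common difference $t$ to be as large as $\lambda/M$ — the ``natural'' choice — would make this step fail once $\lambda$ is not small; this is exactly the regime where a density input is unavoidable, and here that input is Szemer\'edi's theorem via $N(n,\delta/4)$. The remaining manipulations (Tonelli for nonnegative integrands, the two changes of variables, and the crude count $\#\{(a,b)\}\le M^2$) are routine, the last being wasteful but already enough for the stated exponent $-d-2$.
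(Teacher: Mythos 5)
Your argument is correct and is essentially the paper's own proof: the same continuous Varnavides averaging over small common differences $t$ at scale $\sim\delta\lambda/(dN)$, the same application of the definition of $N(n,\delta/4)$ to the trace sets $S_{z,t}$ after a Markov/pigeonholing step, and the same double count with the substitution $x=z+at$, $y=bt$ and the crude $M^2$ bound on the pairs $(a,b)$, yielding the exponent $-d-2$. The only differences are cosmetic (you handle boundary effects by estimating the overlap $|[0,1]^d\cap([0,1]^d+st)|$ instead of restricting $u$ to $[0,1-\vartheta]^d$ as the paper does, and you allow $t$ near $0$ rather than working in a dyadic shell), neither of which changes the substance.
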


\begin{proof}
Denote $N:=N(n,\delta/4)$, $\vartheta:=\delta/4d$, and $T := (\vartheta\lambda/2N,\vartheta\lambda/N]^d \times [0,1-\vartheta]^d$.
The basic idea is to apply discrete Szemer\'{e}di's theorem to $N$-point sets $\{u+kt:k=0,1,\ldots,N-1\}$ obtained by fixing $(t,u)\in T$ and then integrate over sufficiently many choices of $(t,u)$. This is a continuous-parameter modification of a trick by Varnavides \cite{Var59:dens}.

Assuming that $A\subseteq[0,1]^d$ has measure $|A|\geq\delta$, we can write
\begin{align*}
& \fint_{T} \Big( \frac{1}{N}\sum_{k=0}^{N-1} \mathbbm{1}_{A}(u+kt) \Big) \,\textup{d}t \,\textup{d}u
= \frac{1}{N}\sum_{k=0}^{N-1} \fint_{(\vartheta\lambda/2N,\vartheta\lambda/N]^d} \fint_{[0,1-\vartheta]^d+kt} \mathbbm{1}_{A}(u) \,\textup{d}u \,\textup{d}t \\
& \geq \frac{|A\cap[\vartheta,1-\vartheta]^d|}{(1-\vartheta)^d}
\geq |A| - \big| [0,1]^d\setminus[\vartheta,1-\vartheta]^d \big| \geq \delta - 2d\vartheta = \frac{\delta}{2}.
\end{align*}
Thus, if we denote
\[ T_{\textup{large}} := \Big\{ (t,u)\in T : \frac{1}{N}\sum_{k=0}^{N-1} \mathbbm{1}_{A}(u+kt) \geq \frac{\delta}{4} \Big\}, \]
then
\begin{align*}
\frac{\delta}{2} & \leq \frac{1}{|T|} \int_{T} \Big( \frac{1}{N}\sum_{k=0}^{N-1} \mathbbm{1}_{A}(u+kt) \Big) \,\textup{d}t \,\textup{d}u \\
& \leq \frac{1}{|T|} \int_{T_{\textup{large}}} 1 \,\textup{d}t \,\textup{d}u + \frac{1}{|T|} \int_{T\setminus T_{\textup{large}}} \frac{\delta}{4} \,\textup{d}t \,\textup{d}u
\leq \frac{|T_{\textup{large}}|}{|T|} + \frac{\delta}{4}
\end{align*}
gives
\begin{equation}\label{eq:tlowerbound}
|T_{\textup{large}}| \geq \frac{\delta}{4} |T|
\geq \frac{\delta}{4} \Big(\frac{\vartheta\lambda}{2N}\Big)^d (1-\vartheta)^d
\geq \frac{\delta}{4} \Big(\frac{\vartheta\lambda}{2N}\Big)^d (1-d\vartheta)
\gtrsim_d \delta^{d+1} N^{-d} \lambda^{d}.
\end{equation}

For each pair $(t,u)\in T_{\textup{large}}$ we consider the set
\[ S_{t,u} := \big\{ k\in\{0,1,2,\ldots,N-1\} : u+k t\in A \big\} \]
and from the mere definition of $T_{\textup{large}}$ we know that it has at least $(\delta/4)N$ elements. Using the fact that we have chosen $N$ to be $N(n,\delta/4)$, we know that $S_{t,u}$ contains a nontrivial $n$-term arithmetic progression, i.e.\@ there exist integers $0\leq k\leq N-1$ and $1\leq l\leq\big\lfloor\frac{N-1-k}{n-1}\big\rfloor$ such that
\[ k, k+l, k+2l,\ldots, k+(n-1)l \in S_{t,u}, \]
i.e.
\[ \prod_{i=0}^{n-1} \mathbbm{1}_{A}\big(u+(k+il)t\big) = 1. \]
Consequently,
\[ |T_{\textup{large}}| \leq \int_{T_{\textup{large}}} \sum_{k=0}^{N-1} \sum_{l=1}^{\lfloor\frac{N-1-k}{n-1}\rfloor} \prod_{i=0}^{n-1} \mathbbm{1}_{A}\big(u+(k+il)t\big) \,\textup{d}t \,\textup{d}u, \]
so, interchanging the integral with the two sums and substituting $x=u+kt$, $y=lt$, we obtain
\[ |T_{\textup{large}}| \leq \sum_{k=0}^{N-1} \sum_{l=1}^{\lfloor\frac{N-1-k}{n-1}\rfloor} \frac{1}{l^d} \int_{(0,\vartheta\lambda]^d} \int_{[0,1]^d} \prod_{i=0}^{n-1} \mathbbm{1}_{A}(x+i y) \,\textup{d}x \,\textup{d}y. \]
Combining the last inequality with \eqref{eq:tlowerbound}, estimating crudely the sums in $k$ and $l$ by $N^2$ times the largest term, and finally using $(0,\vartheta\lambda]\subset[0,\lambda]$, we deduce \eqref{eq:formlowerbound}.
\end{proof}

Specializing $\lambda=1$ in Lemma~\ref{lemma:lowerbound} and using Estimates \eqref{eq:roth3}--\eqref{eq:szemeredi} proves Proposition~\ref{prop:contszemeredi}. Note that both the factor $\delta^{d+1}$ and the implicit constant depending on $d$ are swiped into the double or triple exponential lower bound for $N(n,\delta)^{-d-2}$.

For the proof of Proposition~\ref{prop:structured} we observe that, by the choice of $\varphi$, for each $y\in[0,1]^d$ we have
\[ (\sigma\ast\varphi)(y) = \int_S \varphi(y-z) \,\textup{d}\sigma(z) \geq \sigma(S)\min_{x\in[-2,2]^d} \varphi(x) >0, \]
so
\[ \sigma_{\lambda}\ast\varphi_{\lambda} \gtrsim_{p,d} \lambda^{-d} \mathbbm{1}_{[0,\lambda]^d} \]
and it remains to apply Lemma~\ref{lemma:lowerbound} again.

\begin{remark}\label{rem:szemerediequiv}
Interestingly, Proposition~\ref{prop:contszemeredi} is actually equivalent to having bounds of the form \eqref{eq:roth3}--\eqref{eq:szemeredi} in Szemer\'{e}di's theorem. Let us fix integers $n\geq 3$ and $d\geq 1$ and assume that the estimate from Proposition~\ref{prop:contszemeredi} holds for any $\delta\in(0,1/2]$ and any measurable set $A\subseteq[0,1]^d$ with measure $|A|\geq\delta$.
Fix $\delta\in(0,1/2]$, take a positive integer $N$ such that
\[ N \geq \begin{cases} \exp((\delta/5)^{-C(n,d)}) & \text{when } 3\leq n\leq 4, \\ \exp(\exp((\delta/5)^{-C(n,d)})) & \text{when } n\geq 5, \end{cases} \]
and finally take a set $S\subseteq\{0,1,\ldots,N-1\}$ with cardinality at least $\delta N$.
We claim that $S$ contains a nontrivial progression of length $n$. We want to argue by contradiction, so suppose that this is not the case. For any $k\in S$ consider a ``thin'' rectangular box
\[ T_k := [(5k+2)/5N,(5k+3)/5N] \times [0,1]^{d-1} \]
and define
\[ A := \bigcup_{k\in S} T_k \subseteq [0,1]^d. \]
If $x_1,x_2,\ldots,x_n$ is an arithmetic progression in $A$ such that $x_i\in T_{k_i}$ for $i=1,2,\ldots,n$, then it is easy to see that the corresponding indices of tubes, $k_1,k_2,\ldots,k_n\in S$, must also stand in arithmetic progression. By our assumption that $S$ contains no nontrivial arithmetic progressions of length $n$, this is only possible when $k_1=k_2=\cdots=k_n$. In particular, $x_1$ and $x_2$ must belong to the same tube $T_{k_1}$. Consequently, we can estimate
\begin{align*}
\int_{([0,1]^d)^2} \prod_{i=0}^{n-1} \mathbbm{1}_{A}(x+iy) \,\textup{d}x \,\textup{d}y
& \leq \sum_{k\in S} \int_{([0,1]^d)^2} \mathbbm{1}_{T_k}(x) \mathbbm{1}_{T_k}(x+y) \,\textup{d}x \,\textup{d}y \\
& \leq \sum_{k\in S} \frac{1}{25N^2} < \frac{1}{N}.
\end{align*}
On the other hand, the set $A$ has measure at least $\delta/5$ and Proposition~\ref{prop:contszemeredi} applies, contradicting the last displayed estimate and our choice of $N$.
\end{remark}

\section{The error part: proof of Proposition~\ref{prop:error}}
\label{sec:errorpart}
In this section we need to control the difference between the ``slightly smoothened'' counting quantity $\mathcal{N}^{\varepsilon}_{\lambda}(A)$ and the ``totally smoothened'' counting quantity $\mathcal{N}^{1}_{\lambda}(A)$. We need to say more about this difference than just bounding it for each fixed scale $\lambda$. Proposition~\ref{prop:error} attempts to control the multiscale sum
\begin{equation}\label{eq:multiscaleHilbert1}
\sum_{j=1}^{J} \kappa_j \big(\mathcal{N}^{\varepsilon}_{\lambda_j}(A)-\mathcal{N}^{1}_{\lambda_j}(A)\big)
\end{equation}
for arbitrary scales $\lambda_j\in(2^{-j},2^{-j+1}]$ and arbitrary complex signs $\kappa_j$, with a bound that is better than the trivial one, i.e.\@ grows sub-linearly in the number of scales $J$. To achieve this we will need to exploit some cancellation between different summands of \eqref{eq:multiscaleHilbert1}. In order to recognize the level of complexity of this problem, let us write \eqref{eq:multiscaleHilbert1} in the form
\[ \int_{\mathbb{R}^d} \int_{\mathbb{R}^d} \prod_{i=0}^{n-1} \mathbbm{1}_{A}(x+i y) K(y) \,\textup{d}y \,\textup{d}x, \]
where
\[ K(y) := \sum_{j=1}^{J} \kappa_j \big( (\sigma_{\lambda_j}\ast\varphi_{\varepsilon\lambda_j})(y) - (\sigma_{\lambda_j}\ast\varphi_{\lambda_j})(y) \big) \]
is viewed as an integral kernel. It can be shown that $K$ is indeed a translation-invariant Cal\-de\-r\'{o}n--Zyg\-mund kernel (see the definition in \cite[Chapter~VII]{St93:book}), but we will not need that fact, since we will take a slightly different route anyway.
Various multilinear generalizations of Cal\-de\-r\'{o}n--Zyg\-mund operators are still far from being categorized and understood.
If we turn our attention to dimension $d=1$, replace $K(y)$ with $\mathbbm{1}_{[-R,-r]\cup[r,R]}(y)/y$, and replace the $n$ appearances of $\mathbbm{1}_{A}$ with generic functions $f_0,f_1,\ldots,f_{n-1}$, we arrive at
\begin{equation}\label{eq:multiscaleHilbert2}
\int_{\mathbb{R}} \int_{[-R,-r]\cup[r,R]} \prod_{i=0}^{n-1} f_{i}(x+i y) \frac{\textup{d}y}{y} \,\textup{d}x,
\end{equation}
which is the (dualized and truncated) \emph{multilinear Hilbert transform}. It is useful to think of $J$ as being roughly $\log(R/r)$. First $\textup{L}^p$-bounds for \eqref{eq:multiscaleHilbert2} that are uniform in $r$, $R$, and the functions $f_i$ were shown by Lacey and Thiele \cite{LT97,LT99} in the case $n=3$, while it is only conjectured that any such bounds hold when $n\geq 4$. On the other hand, any bound for \eqref{eq:multiscaleHilbert2} that grows slower than $\log(R/r)$ as $r\to0^+$ and $R\to\infty$ is a nontrivial result. That type of bound was first established by Tao \cite{T16:mht}, while a quantitative improvement to a bound of the form $(\log(R/r))^{1-\gamma}$ for some $\gamma>0$ was later given by Thiele and the authors in \cite{DKT16:splx}.
Consequently, a large part of this section draws ideas from the paper \cite{DKT16:splx}. Notable differences are that we need more general kernels, we are working in higher dimensions $d$, and we are dealing with arithmetic progressions, rather than with (higher-dimensional) corners.

For the purpose of bounding \eqref{eq:multiscaleHilbert1}, we will find useful to decompose it into sums involving Gaussian functions. Let $\mathbbm{g}\colon\mathbb{R}^d\to\mathbb{R}$ be the \emph{standard Gaussian} on $\mathbb{R}^d$, defined as
\begin{equation}\label{eq:standardgaussian}
\mathbbm{g}(x) := e^{-\pi\|x\|_{\ell^2}^2}
\end{equation}
for each $x\in\mathbb{R}^d$, or, in Cartesian coordinates,
\[ \mathbbm{g}(x_1,x_2,\ldots,x_d) = e^{-\pi(x_1^2+x_2^2+\cdots+x_d^2)}. \]
Let us introduce the following notation for its partial derivatives,
\[ \mathbbm{h}^{(l)} := \partial_l \mathbbm{g}, \quad \mathbbm{k}^{(l)} := \partial_l^2 \mathbbm{g} \]
for $l=1,2,\ldots,d$, and also write
\[ \mathbbm{k} := \sum_{l=1}^{d}\mathbbm{k}^{(l)} = \Delta\mathbbm{g}. \]
It is well-known that
\begin{equation}\label{eq:ftofgaussians0}
\widehat{\mathbbm{g}}(\xi) = \mathbbm{g}(\xi).
\end{equation}
Consequently,
\begin{equation}\label{eq:ftofgaussians}
\widehat{\mathbbm{h}^{(l)}}(\xi) = 2\pi\mathbbm{i}\xi_l \,\mathbbm{g}(\xi), \quad
\widehat{\mathbbm{k}^{(l)}}(\xi) = -(2\pi\xi_l)^2 \,\mathbbm{g}(\xi),
\end{equation}
and
\begin{equation}\label{eq:ftofk}
\widehat{\mathbbm{k}}(\xi) = - 4 \pi^2 \|\xi\|_{\ell^2}^2 e^{-\pi \|\xi\|_{\ell^2}^2}
\end{equation}
for each $\xi=(\xi_1,\ldots,\xi_d)\in\mathbb{R}^d$. Identities
\begin{equation}\label{eq:teleconv0}
\mathbbm{g}_{a} \ast \mathbbm{g}_{b} = \mathbbm{g}_{\sqrt{a^2+b^2}}, \quad
\mathbbm{h}^{(l)}_{a} \ast \mathbbm{h}^{(l)}_{b} = \frac{ab}{a^2+b^2} \mathbbm{k}^{(l)}_{\sqrt{a^2+b^2}}
\end{equation}
and
\begin{equation}\label{eq:teleconv}
\mathbbm{h}^{(l)}_{a} \ast \mathbbm{g}_{b} = \frac{a}{\sqrt{a^2+b^2}} \mathbbm{h}^{(l)}_{\sqrt{a^2+b^2}}, \quad
\mathbbm{k}^{(l)}_{a} \ast \mathbbm{g}_{b} = \frac{a^2}{a^2+b^2} \mathbbm{k}^{(l)}_{\sqrt{a^2+b^2}}
\end{equation}
are immediate consequences of \eqref{eq:ftofgaussians0}, \eqref{eq:ftofgaussians}, and the fact that the Fourier transform changes convolution to the pointwise product.
Straightforward differentiation easily gives
\begin{equation}\label{eq:heatequation}
\frac{\partial}{\partial t} \big( \mathbbm{g}_{t}(x) \big) = \frac{1}{2\pi t} \mathbbm{k}_{t}(x).
\end{equation}
One can notice that \eqref{eq:heatequation} is actually the \emph{heat equation}, rewritten using a quadratic change of the time variable.

Let us define several auxiliary quantities involving these Gaussian functions. They will depend on quite a few parameters: $k\in\{1,2,\ldots,n-1\}$, $l\in\{1,2,\ldots,d\}$, $m\in\{k,k+1,\ldots,n-1\}$, $\alpha,\alpha_1,\ldots,\alpha_{n-1}\in(0,\infty)$, and $t\in(0,\infty)$. The quantities will all be functions of $y,u_1,\ldots,u_{n-1}\in\mathbb{R}^d$, but in favor of brevity we omit writing their variables and some of the parameters. These quantities are:
\begin{align*}
K_{k,l,m,t} := - \int_{(\mathbb{R}^d)^{n-k}}
& \mathbbm{g}_{t\alpha}(y+p_k+\cdots +p_{n-1}) \mathbbm{h}^{(l)}_{t\alpha_m}(p_m) \mathbbm{h}^{(l)}_{t\alpha_m}(u_m-p_m) \\
& \times \Big( \prod_{\substack{k\leq j\leq n-1\\ j\neq m}} \mathbbm{g}_{t\alpha_j}(p_j) \mathbbm{g}_{t\alpha_j}(u_j-p_j) \Big)
\,\textup{d}p_k\cdots \textup{d}p_{n-1},
\end{align*}
\begin{align*}
L_{k,l,t} := -\frac{1}{2}\Big(1+ \alpha^{-2} \sum_{m=k}^{n-1} \alpha_m^2 \Big) \int_{(\mathbb{R}^d)^{n-k}}
& \mathbbm{k}^{(l)}_{t\alpha}(y+p_k+\cdots +p_{n-1}) \\
& \times \Big( \prod_{j=k}^{n-1} \mathbbm{g}_{t\alpha_j}(p_j) \mathbbm{g}_{t\alpha_j}(u_j-p_j) \Big)
\,\textup{d}p_k\cdots \textup{d}p_{n-1},
\end{align*}
and
\[ M_{k,t} := \int_{(\mathbb{R}^d)^{n-k}} \mathbbm{g}_{t\alpha}(y+p_k+\cdots +p_{n-1})
\Big ( \prod_{j=k}^{n-1} \mathbbm{g}_{t\alpha_j}(p_j) \mathbbm{g}_{t\alpha_j}(u_j-p_j) \Big ) \,\textup{d}p_k\cdots \textup{d}p_{n-1}. \]
We will also find useful to define:
\begin{align}
\widetilde{K}_{l,m,t} := - \int_{(\mathbb{R}^d)^{n-1}} & \mathbbm{h}^{(l)}_{t\alpha_m}(p_m) \mathbbm{h}^{(l)}_{t\alpha_m}(u_m-p_m) \nonumber \\
& \Big( \prod_{\substack{1\leq j\leq n-1\\ j\neq m}} \mathbbm{g}_{t\alpha_j}(p_j) \mathbbm{g}_{t\alpha_j}(u_j-p_j) \Big) \,\textup{d}p_1\cdots \textup{d}p_{n-1} \label{eq:Ktildeisdefined}
\end{align}
and
\[ \widetilde{M}_{t} := \int_{(\mathbb{R}^d)^{n-1}} \prod_{j=1}^{n-1} \mathbbm{g}_{t\alpha_j}(p_j) \mathbbm{g}_{t\alpha_j}(u_j-p_j) \,\textup{d}p_1\cdots \textup{d}p_{n-1}. \]
Observe that some of the above quantities are defined with negative signs before the integrals. These signs matter, since the following proof crucially depends on the fact that certain expressions are nonnegative.

\begin{lemma}\label{lemma:Gaussians}
For real numbers $a$ and $b$ such that $0<a<b$, one has
\begin{equation}\label{eq:Gaussianidentity1}
\int_{a}^{b} \sum_{l=1}^d \Big( L_{k,l,t} + \sum_{m=k}^{n-1} K_{k,l,m,t} \Big) \frac{\textup{d}t}{t} = \pi \big( M_{k,a} - M_{k,b} \big)
\end{equation}
and
\begin{equation}\label{eq:Gaussianidentity2}
\int_{a}^{b} \sum_{l=1}^d \sum_{m=1}^{n-1} \widetilde{K}_{l,m,t} \,\frac{\textup{d}t}{t} = \pi \big( \widetilde{M}_{a} - \widetilde{M}_{b} \big).
\end{equation}
\end{lemma}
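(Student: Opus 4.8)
The plan is to prove both identities by differentiating the right-hand sides in a time-like variable and matching them to the integrands on the left. Concretely, both \eqref{eq:Gaussianidentity1} and \eqref{eq:Gaussianidentity2} will follow from the fundamental theorem of calculus once we establish the pointwise derivative identities
\[
-t\,\frac{\textup{d}}{\textup{d}t}\big(\pi M_{k,t}\big) = t\sum_{l=1}^{d}\Big(L_{k,l,t}+\sum_{m=k}^{n-1}K_{k,l,m,t}\Big)\quad\text{(after dividing by }t\text{)},
\]
and the analogous statement with $\widetilde M_t$ and $\widetilde K_{l,m,t}$; integrating $\textup{d}t/t$ from $a$ to $b$ then telescopes to $\pi(M_{k,a}-M_{k,b})$ and $\pi(\widetilde M_a-\widetilde M_b)$. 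So the whole lemma reduces to a differentiation computation.

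First I would carry out the differentiation of $M_{k,t}$. It is an integral over $(p_k,\ldots,p_{n-1})$ of a product of Gaussians $\mathbbm{g}_{t\alpha}(y+p_k+\cdots+p_{n-1})\prod_j\mathbbm{g}_{t\alpha_j}(p_j)\mathbbm{g}_{t\alpha_j}(u_j-p_j)$, so $\partial_t$ hits each factor in turn by the product rule. The key tool is the heat-equation identity \eqref{eq:heatequation}, which gives $\partial_t(\mathbbm{g}_{s})=\tfrac{1}{2\pi s}\mathbbm{k}_s$; applying it with $s=t\alpha$ (and chain rule, picking up a factor $\alpha$) produces $\tfrac{1}{2\pi t}\mathbbm{k}_{t\alpha}$ on the first factor, and similarly $\tfrac{1}{2\pi t}\mathbbm{k}_{t\alpha_j}$ on each $\mathbbm{g}_{t\alpha_j}$ factor. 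Now $\mathbbm{k}=\Delta\mathbbm{g}=\sum_l\mathbbm{k}^{(l)}$, so each such term splits into a sum over $l$. The term where $\mathbbm{k}_{t\alpha}$ replaces $\mathbbm{g}_{t\alpha}$, after summing over $l$ and inserting the combinatorial prefactor, is exactly $-t^{-1}L_{k,l,t}$ summed over $l$ — this is where the coefficient $\tfrac12(1+\alpha^{-2}\sum_m\alpha_m^2)$ comes from: it collects the contribution of differentiating the head Gaussian together with all the others, using that differentiating $\mathbbm{g}_{t\alpha_j}(p_j)$ and $\mathbbm{g}_{t\alpha_j}(u_j-p_j)$ can, after integrating by parts in $p_j$, be re-expressed in terms of $\mathbbm{k}^{(l)}_{t\alpha}$ acting on the head variable. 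For the remaining terms, where $\mathbbm{k}^{(l)}_{t\alpha_m}$ replaces one $\mathbbm{g}_{t\alpha_m}(p_m)$ (or $\mathbbm{g}_{t\alpha_m}(u_m-p_m)$), I would integrate by parts in $p_m$: since $\mathbbm{k}^{(l)}=\partial_l^2\mathbbm{g}=\partial_l\mathbbm{h}^{(l)}$, one derivative moves onto the other $p_m$-factor, turning the pair $\mathbbm{g}_{t\alpha_m}(p_m)\mathbbm{g}_{t\alpha_m}(u_m-p_m)$ into $\mathbbm{h}^{(l)}_{t\alpha_m}(p_m)\mathbbm{h}^{(l)}_{t\alpha_m}(u_m-p_m)$ (with the correct sign, because differentiating $u_m-p_m$ in $p_m$ flips a sign — this is precisely why $K_{k,l,m,t}$ carries an explicit minus sign). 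Summing over $l$ and over $m$ gives $t^{-1}\sum_{l,m}K_{k,l,m,t}$. Collecting everything yields $-\pi\,\partial_t M_{k,t}=t^{-1}\sum_l(L_{k,l,t}+\sum_m K_{k,l,m,t})$, which is \eqref{eq:Gaussianidentity1}. The proof of \eqref{eq:Gaussianidentity2} is the same computation with the head Gaussian $\mathbbm{g}_{t\alpha}(y+\cdots)$ absent, so only the integration-by-parts terms survive and there is no $L$-term — hence the cleaner identity.

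I expect the main obstacle to be bookkeeping rather than conceptual: getting the constant $\tfrac12(1+\alpha^{-2}\sum_{m}\alpha_m^2)$ exactly right and tracking all the signs through the integrations by parts. The subtle point is that when we differentiate the factors $\mathbbm{g}_{t\alpha_j}(p_j)$ and $\mathbbm{g}_{t\alpha_j}(u_j-p_j)$ via the heat equation, the resulting Laplacians in the $p_j$-variables must be transferred — using $\int \Delta_{p}(\cdots)\,\textup{d}p$-type manipulations and the convolution identities \eqref{eq:teleconv0}--\eqref{eq:teleconv}, or equivalently a change of variables making the integral a genuine convolution — onto the single head variable $y+p_k+\cdots+p_{n-1}$, which is what forces the $\alpha^{-2}\alpha_m^2$ weights to appear. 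It is advisable to verify the identity first in a model case (e.g. $n-k=1$, $d=1$), where the convolution identities $\mathbbm{g}_a\ast\mathbbm{g}_b=\mathbbm{g}_{\sqrt{a^2+b^2}}$ etc.\ let one compute $M_{k,t}$ in closed form, and then observe that the general case follows by the same product-rule-plus-integration-by-parts bookkeeping. One should also note that all differentiations under the integral sign and integrations by parts are justified by the rapid (Gaussian) decay of every factor, so no convergence issues arise.
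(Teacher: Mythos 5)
Your top-level plan---reduce both identities, via the fundamental theorem of calculus, to a derivative identity in $t$ and then verify that derivative---is exactly the paper's strategy, and for \eqref{eq:Gaussianidentity2} your physical-space computation (product rule plus the heat equation \eqref{eq:heatequation}, with the $p_j$-integrals collapsed using \eqref{eq:teleconv0}) is essentially the proof given in the paper. For \eqref{eq:Gaussianidentity1}, however, the paper does not stay on the physical side: it notes that $K_{k,l,m,t}$, $L_{k,l,t}$, $M_{k,t}$ are integrals of explicit functions over a linear subspace, rewrites each as the integral of the Fourier transform over the orthogonal complement, and then the whole identity becomes a one-line chain-rule computation for $e^{-\pi t^2 Q}$ with $Q=\alpha^2\|\eta\|_{\ell^2}^2+\sum_j\alpha_j^2(\|\xi_j\|_{\ell^2}^2+\|\xi_j+\eta\|_{\ell^2}^2)$, together with the elementary rearrangement $\|\xi_m\|_{\ell^2}^2+\|\xi_m+\eta\|_{\ell^2}^2=\|\eta\|_{\ell^2}^2+2\,\xi_m\cdot(\xi_m+\eta)$; no integration by parts is needed, and this is precisely how the weights $\alpha^{-2}\alpha_m^2$ in $L_{k,l,t}$ appear. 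Your route can be completed, so the difference is one of execution: the Fourier detour trades your derivative redistribution for a trivial algebraic identity. (Your alternative suggestion of computing closed forms via the convolution identities does not apply directly to \eqref{eq:Gaussianidentity1}, because the head Gaussian couples all the $p_j$'s, so $M_{k,t}$ is not an iterated convolution in a single variable; this is the ``messy Gaussian integral'' route the paper explicitly avoids.)

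That redistribution is also where your sketch has a real gap as written. When the heat equation places $\mathbbm{k}^{(l)}_{t\alpha_m}$ on $\mathbbm{g}_{t\alpha_m}(p_m)$ (or on $\mathbbm{g}_{t\alpha_m}(u_m-p_m)$) and you integrate by parts in $p_m$, the derivative does not land only on ``the other $p_m$-factor'': the head Gaussian $\mathbbm{g}_{t\alpha}(y+p_k+\cdots+p_{n-1})$ also depends on $p_m$, so besides the desired pair $\mathbbm{h}^{(l)}_{t\alpha_m}(p_m)\,\mathbbm{h}^{(l)}_{t\alpha_m}(u_m-p_m)$ you pick up cross terms containing $\mathbbm{h}^{(l)}_{t\alpha}$ of the head variable, with weight $\alpha_m/\alpha$. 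Trying to remove them by integrating by parts in $p_m$ in the same way merely returns you to the starting expression, so a single integration by parts per term is not enough. The missing ingredient is a second, different total-derivative identity, e.g.\ $\int_{\mathbb{R}^d}\partial_{p_m,l}\big[\mathbbm{g}_{t\alpha_m}(p_m)\mathbbm{g}_{t\alpha_m}(u_m-p_m)\,\mathbbm{h}^{(l)}_{t\alpha}(y+p_k+\cdots+p_{n-1})\big]\,\textup{d}p_m=0$, which converts the (antisymmetric) cross terms into $\frac{\alpha_m^2}{\alpha^2}\int\mathbbm{g}_{t\alpha_m}(p_m)\mathbbm{g}_{t\alpha_m}(u_m-p_m)\,\mathbbm{k}^{(l)}_{t\alpha}(\cdot)\,\textup{d}p_m$; summing over $m$ and adding the term where $\partial_t$ hits the head Gaussian directly then yields exactly the coefficient $\frac12\big(1+\alpha^{-2}\sum_{m=k}^{n-1}\alpha_m^2\big)$ of $L_{k,l,t}$, while the surviving $\mathbbm{h}\mathbbm{h}$ pairs give the $K_{k,l,m,t}$. (Incidentally, the minus sign in $K_{k,l,m,t}$ comes from the overall $-\pi t\,\partial_t$, i.e.\ from the orientation $M_{k,a}-M_{k,b}$, not from the sign flip in differentiating $u_m-p_m$: in that integration by parts the two sign changes cancel.) With this extra step your argument closes and matches the lemma; without it, the claim that the differentiated $m$-terms produce $t^{-1}\sum_{l,m}K_{k,l,m,t}$ is not yet justified.
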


\begin{proof}
Identity \eqref{eq:Gaussianidentity1} could, in principle, be proven by evaluating several Gaussian integrals. In practice this would lead to very messy computations, so we approach it differently, analogously as it was done in \cite{DKT16:splx}.
Observe that $K_{k,l,m,t}$ is the integral of the function $H\colon(\mathbb{R}^d)^{2n-2k+1}\to\mathbb{C}$,
\begin{align*}
H(q,q^0_k,q_k^1,\ldots, q^0_{n-1},q_{n-1}^1) :=\,
& \mathbbm{g}_{t\alpha}(q+y) \mathbbm{h}^{(l)}_{t\alpha_m}(q^0_m) \mathbbm{h}^{(l)}_{t\alpha_m}(q^1_m-u_m) \\
& \times \Big( \prod_{\substack{k\leq j\leq n-1\\ j\neq m}}
\mathbbm{g}_{t\alpha_j}(q^0_j) \mathbbm{g}_{t\alpha_j}(q^1_j-u_j) \Big)
\end{align*}
over the linear space
\[ \big\{(p_k+\cdots +p_{n-1},\, p_k ,\, p_k,\, p_{k+1},\, p_{k+1},\ldots, p_{n-1},\,p_{n-1}) : p_k,\ldots, p_{n-1}\in \mathbb{R}^d\big\}. \]
Therefore it also equals the integral of the Fourier transform of $H$ over its orthogonal complement,
\[ \big\{(\eta,\, \xi_k ,\, -\xi_k-\eta,\, \xi_{k+1},\, -\xi_{k+1}-\eta,\ldots, \xi_{n-1},\,-\xi_{n-1}-\eta) : \eta, \xi_k,\ldots, \xi_{n-1}\in \mathbb{R}^d\big\}. \]
Using this fact and symmetries of the Fourier transform, we obtain
\begin{align*}
K_{k,l,m,t} =\, & - \int_{(\mathbb{R}^d)^{n-k+1}} \widehat{\mathbbm{g}}(t\alpha\eta) \widehat{\mathbbm{h}^{(l)}}(t\alpha_m\xi_m)\widehat{\mathbbm{h}^{(l)}}(t\alpha_m(\xi_m+\eta)) \\
& \times \Big(\prod_{\substack{k\leq j\leq n-1\\ j\neq m}} \widehat{\mathbbm{g}}(t\alpha_j\xi_j)\widehat{\mathbbm{g}}(t\alpha_j(\xi_j+\eta))\Big)
\,e^{2\pi \mathbbm{i} \big( y\cdot\eta + \sum_{j=k}^{n-1}u_j\cdot(\xi_j+\eta) \big )} \,\textup{d}\eta \,\textup{d}\xi_k\cdots \textup{d}\xi_{n-1}.
\end{align*}
Formula \eqref{eq:ftofgaussians} finally gives
\begin{align*}
K_{k,l,m,t} = \int_{(\mathbb{R}^d)^{n-k+1}} & 4\pi^2 t^2 \alpha_m^2 \xi_m^l (\xi_m^l+\eta^l) \,\widehat{\mathbbm{g}}(t\alpha\eta)
\Big(\prod_{j=k}^{n-1} \widehat{\mathbbm{g}}(t\alpha_j\xi_j)\widehat{\mathbbm{g}}(t\alpha_j(\xi_j+\eta))\Big) \\
& \times e^{2\pi \mathbbm{i} \big( y\cdot\eta + \sum_{j=k}^{n-1}u_j\cdot(\xi_j+\eta) \big )} \,\textup{d}\eta \,\textup{d}\xi_k\cdots \textup{d}\xi_{n-1},
\end{align*}
where $\xi_j=(\xi_j^1,\ldots,\xi_j^d)$ and $\eta=(\eta^1,\ldots,\eta^d)$.
Similarly, $L_{k,l,t}$ can be expressed as
\begin{align*}
L_{k,l,t} = \int_{(\mathbb{R}^d)^{n-k+1}} & 2\pi^2 t^2 \Big(\alpha^{2} + \sum_{m=k}^{n-1} \alpha_m^2 \Big) (\eta^l)^2 \,\widehat{\mathbbm{g}}(t\alpha\eta) \Big(\prod_{j=k}^{n-1} \widehat{\mathbbm{g}}(t\alpha_j\xi_j)\widehat{\mathbbm{g}}(t\alpha_j(\xi_j+\eta))\Big) \\
& \times e^{2\pi \mathbbm{i} \big( y\cdot\eta + \sum_{j=k}^{n-1}u_j\cdot(\xi_j+\eta) \big )} \,\textup{d}\eta \,\textup{d}\xi_k\cdots \textup{d}\xi_{n-1}
\end{align*}
and $M_{k,t}$ can be expressed simply as
\begin{align*}
M_{k,t} = \int_{(\mathbb{R}^d)^{n-k+1}} & \widehat{\mathbbm{g}}(t\alpha\eta)
\Big(\prod_{j=k}^{n-1} \widehat{\mathbbm{g}}(t\alpha_j\xi_j)\widehat{\mathbbm{g}}(t\alpha_j(\xi_j+\eta))\Big) \\
& \times e^{2\pi \mathbbm{i} \big( y\cdot\eta + \sum_{j=k}^{n-1}u_j\cdot(\xi_j+\eta) \big )} \,\textup{d}\eta \,\textup{d}\xi_k\cdots \textup{d}\xi_{n-1}.
\end{align*}
We see that the identity from the statement of Lemma~\ref{lemma:Gaussians} will follow from
\begin{align*}
& \int_{a}^{b} \bigg( 2\pi t \Big( \alpha^2 + \sum_{m=k}^{n-1} \alpha_m^2 \Big) \|\eta\|_{\ell^2}^2 +  4\pi t \sum_{m=k}^{n-1} \alpha_m^2 \xi_m\cdot (\xi_m+\eta) \bigg) \\
& \qquad \times \widehat{\mathbbm{g}}(t\alpha\eta) \Big(\prod_{j=k}^{n-1} \widehat{\mathbbm{g}}(t\alpha_j\xi_j)\widehat{\mathbbm{g}}(t\alpha_j(\xi_j+\eta))\Big) \,\textup{d}t \\
& = - \bigg( \widehat{\mathbbm{g}}(t\alpha\eta)
\Big(\prod_{j=k}^{n-1} \widehat{\mathbbm{g}}(t\alpha_j\xi_j)\widehat{\mathbbm{g}}(t\alpha_j(\xi_j+\eta))\Big) \bigg) \bigg|_{t=a}^{t=b}
\end{align*}
after multiplication with the complex exponential and integration in the variables $\eta$, $\xi_k$, \ldots, $\xi_{n-1}$.
Via the fundamental theorem of calculus, the last equality is equivalent to its differential formulation,
\begin{align*}
& \frac{\partial}{\partial t} \bigg( \widehat{\mathbbm{g}}(t\alpha\eta)
\Big(\prod_{j=k}^{n-1} \widehat{\mathbbm{g}}(t\alpha_j\xi_j)\widehat{\mathbbm{g}}(t\alpha_j(\xi_j+\eta))\Big) \bigg) \\
& = -2\pi t \Big( \alpha^2 \|\eta\|_{\ell^2}^2 + \sum_{j=k}^{n-1} \alpha_j^2 \big( 2\|\xi_j\|_{\ell^2}^2 + 2\xi_j\cdot\eta + \|\eta\|_{\ell^2}^2 \big) \Big) \\
& \qquad \times \widehat{\mathbbm{g}}(t\alpha\eta) \Big( \prod_{j=k}^{n-1} \widehat{\mathbbm{g}}(t\alpha_j\xi_j) \widehat{\mathbbm{g}}(t\alpha_j(\xi_j+\eta)) \Big),
\end{align*}
while this identity is readily verified by recalling \eqref{eq:ftofgaussians0}, writing
\[ \widehat{\mathbbm{g}}(t\alpha\eta) \Big(\prod_{j=k}^{n-1} \widehat{\mathbbm{g}}(t\alpha_j\xi_j)\widehat{\mathbbm{g}}(t\alpha_j(\xi_j+\eta))\Big)
= e^{-\pi t^2 \left( \alpha^2\|\eta\|_{\ell^2}^2 + \sum_{j=k}^{n-1} \alpha_j^2 ( \|\xi_j\|_{\ell^2}^2 + \|\xi_j+\eta\|_{\ell^2}^2 ) \right)}, \]
and using the chain rule. This proves Identity \eqref{eq:Gaussianidentity1}.

Identity \eqref{eq:Gaussianidentity2} can now be deduced simply by specializing \eqref{eq:Gaussianidentity1} to $k=1$, integrating it in $y$ over $\mathbb{R}^d$, and observing $\int_{\mathbb{R}^d}\mathbbm{k}^{(l)}_{t\alpha}=0$, which makes $L_{k,l,t}$ vanish.
However, \eqref{eq:Gaussianidentity2} is actually much simpler to prove, as this time the corresponding Gaussian integrals can be evaluated quite easily. For an alternative proof of \eqref{eq:Gaussianidentity2} we use Fubini's theorem and convolution identities \eqref{eq:teleconv0} to rewrite $\widetilde{K}_{l,m,t}$ and $\widetilde{M}_{t}$ as
\[ \widetilde{K}_{l,m,t} = - \frac{1}{2} \mathbbm{k}^{(l)}_{2^{1/2}t\alpha_m}(u_m)
\prod_{\substack{1\leq j\leq n-1\\ j\neq m}} \mathbbm{g}_{2^{1/2}t\alpha_j}(u_j) \]
and
\[ \widetilde{M}_{t} = \prod_{j=1}^{n-1} \mathbbm{g}_{2^{1/2}t\alpha_j}(u_j). \]
Now the product rule for differentiation and the heat equation \eqref{eq:heatequation} together give
\begin{align*}
\frac{\partial}{\partial t}\widetilde{M}_{t}
& = \sum_{m=1}^{n-1} \frac{1}{2\pi t} \mathbbm{k}_{2^{1/2}t\alpha_m}(u_m) \prod_{\substack{1\leq j\leq n-1\\ j\neq m}} \mathbbm{g}_{2^{1/2}t\alpha_j}(u_j) \\
& = \frac{1}{2\pi t} \sum_{m=1}^{n-1} \sum_{l=1}^{d} \mathbbm{k}^{(l)}_{2^{1/2}t\alpha_m}(u_m) \prod_{\substack{1\leq j\leq n-1\\ j\neq m}} \mathbbm{g}_{2^{1/2}t\alpha_j}(u_j)
= -\frac{1}{\pi t} \sum_{m=1}^{n-1} \sum_{l=1}^{d} \widetilde{K}_{l,m,t},
\end{align*}
which is precisely the differential formulation of Identity \eqref{eq:Gaussianidentity2}.
\end{proof}

For $k\in\{0,1,\ldots,n-1\}$ denote
\[ \mathcal{F}_k := \prod_{i=0}^{k} \prod_{(r_{k+1},\ldots, r_{n-1})\in\{0,1\}^{n-k-1}} \mathbbm{1}_{A}\Big( x+i y + \sum_{s={k+1}}^{n-1} r_s (i+s-k) u_s \Big). \]
Note that this is a function of $x,y,u_{k+1},\ldots, u_{n-1}\in \mathbb{R}^d$.
For $k=n-1$ the sum in $s$ is interpreted as $0$ and the product over $(r_{k+1},\ldots,r_{n-1})$ is interpreted simply as $\mathbbm{1}_{A}( x+i y)$, so $\mathcal{F}_{n-1}=\mathcal{F}_{n-1}(x,y)$ becomes exactly $\mathcal{F}(x,y)$, which was defined previously in \eqref{eq:Fdefinedasaproduct}.
Let us also agree to write
\begin{equation}\label{eq:Gisdefined}
\mathcal{G}^{\beta_1,\beta_2,\ldots,\beta_{n-1}} := \prod_{(r_{1},\ldots, r_{n-1})\in\{0,1\}^{n-1}} \mathbbm{1}_{A}\Big( x + \sum_{s={1}}^{n-1} r_s \beta_s u_s \Big)
\end{equation}
for parameters $\beta_1,\ldots,\beta_{n-1}\in\mathbb{R}\setminus\{0\}$. This is a function of $x,u_{1},\ldots, u_{n-1}\in \mathbb{R}^d$.

In order to control \eqref{eq:multiscaleHilbert1} we will introduce two finite sequences of quantities with increasing degrees of complexity. Take $k=\{1,\ldots,n-1\}$, $l\in\{1,\ldots,d\}$, $a,b\in(0,\infty)$ such that $a<b$, and $\alpha,\alpha_k,\ldots, \alpha_{n-1}\in (0,\infty)$.
We define
\begin{align*}
\widetilde{\Lambda}_{k,l,a,b}^{\alpha,\alpha_k,\ldots, \alpha_{n-1}} :=\, & \int_{a}^{b} \int_{(\mathbb{R}^d)^{2(n-k-1)}} \Big| \int_{(\mathbb{R}^d)^3}\mathcal{F}_k \, \mathbbm{h}^{(l)}_{t\alpha_k}(y-p_{k}) \mathbbm{h}^{(l)}_{t\alpha}(p_{k}+\cdots +p_{n-1}) \,\textup{d}p_k\,\textup{d}x\,\textup{d}y \Big| \\
& \times \Big ( \prod_{j=k+1}^{n-1} \mathbbm{g}_{t\alpha_j}(p_j) \mathbbm{g}_{t\alpha_j}(u_j-p_j) \Big ) \,\textup{d}p_{k+1}\cdots \textup{d}p_{n-1} \,\textup{d}u_{k+1}\cdots \textup{d}u_{n-1} \,\frac{\textup{d}t}{t}
\end{align*}
and, if $k\geq 2$, then we also define
\begin{align*}
\Lambda_{k,l,a,b}^{\alpha,\alpha_k,\ldots, \alpha_{n-1}} :=\, & \int_{a}^{b} \int_{(\mathbb{R}^d)^{2(n-k)}} \Big| \int_{\mathbb{R}^d}\mathcal{F}_k \, \mathbbm{h}^{(l)}_{t\alpha_k}(y-p_k) \,\textup{d}y \Big| \,\mathbbm{g}_{t\alpha}(p_k+\cdots +p_{n-1}) \\
& \times \Big ( \prod_{j=k+1}^{n-1} \mathbbm{g}_{t\alpha_j}(p_j) \mathbbm{g}_{t\alpha_j}(u_j-p_j) \Big ) \,\textup{d}p_k\cdots \textup{d}p_{n-1} \,\textup{d}u_{k+1}\cdots \textup{d}u_{n-1} \,\textup{d}x \,\frac{\textup{d}t}{t} .
\end{align*} 	
Note that, for $k=n-1$ the products in $j$ are interpreted simply as being equal to $1$.

\begin{lemma}\label{lemma:telescoping}
For every $l\in\{1,\ldots,d\}$, $a,b\in(0,\infty)$ such that $a<b$, and $\alpha, \alpha_1,\ldots, \alpha_{n-1}\in (0,\infty)$ we have
\begin{equation}\label{eq:lemmateleineq1}
\widetilde{\Lambda}_{1,l,a,b}^{\alpha,\alpha_1,\ldots, \alpha_{n-1}} \lesssim 1.
\end{equation}
For every $k=\{2,\ldots,n-1\}$, $l\in\{1,\ldots,d\}$, $a,b\in(0,\infty)$ such that $b\geq 3a$, and $\alpha,\alpha_k,\ldots,\alpha_{n-1}$
$\in [2^{-(n-k)/2},\infty)$ 
we have
\begin{equation}\label{eq:lemmateleineq2}
\Lambda_{k,l,a,b}^{\alpha,\alpha_k,\ldots, \alpha_{n-1}},\, \widetilde{\Lambda}_{k,l,a,b}^{\alpha,\alpha_k,\ldots, \alpha_{n-1}} \lesssim_{n,d}  (\alpha\alpha_k\cdots \alpha_{n-1})^2 \Big(\log\frac{b}{a}\Big)^{1-2^{-k+1}}.
\end{equation}
\end{lemma}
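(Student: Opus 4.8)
\textbf{Proof plan for Lemma~\ref{lemma:telescoping}.}

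My plan is to induct on $k$ from $k=n-1$ down to $k=1$, using the Gaussian identities of Lemma~\ref{lemma:Gaussians} as the engine that both telescopes in the scale variable $t$ and trades one factor $\mathbbm{1}_A(\cdot)$ for an $\ell^2$-type inner product structure (a ``positive square''). The base case $k=n-1$ is the simplest: here $\mathcal{F}_{n-1}=\mathcal{F}(x,y)=\prod_{i=0}^{n-1}\mathbbm{1}_A(x+iy)$ depends only on $(x,y)$, the products over $j$ are empty, and the quantities $\Lambda_{n-1,l,a,b}$ and $\widetilde\Lambda_{n-1,l,a,b}$ reduce to single $t$-integrals against $\mathbbm{h}^{(l)}_{t\alpha_{n-1}}$ convolved (in the $\widetilde\Lambda$ case) with $\mathbbm{h}^{(l)}_{t\alpha}$. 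For $\widetilde\Lambda$, identity \eqref{eq:teleconv0} collapses $\mathbbm{h}^{(l)}_{t\alpha}\ast\mathbbm{h}^{(l)}_{t\alpha_{n-1}}$ into a single $\mathbbm{k}^{(l)}$ at rescaled width, and then Lemma~\ref{lemma:Gaussians} (specifically the differential form behind \eqref{eq:Gaussianidentity1}/\eqref{eq:Gaussianidentity2}, restricted to the $n-k=1$ factor) expresses the $t$-integral of $\sum_l \widetilde K_{l,n-1,t}/t$ as a telescoping difference $\pi(\widetilde M_a-\widetilde M_b)$; integrating $\mathcal{F}$ against this and bounding $0\le\widetilde M_t\le\|\mathbbm{g}\|_\infty^{\,n-1}$, together with $\int\mathcal{F}(x,y)\,dx\le 1$ for each $y$, yields the bound $\lesssim 1$ in \eqref{eq:lemmateleineq1} (for $k=1$ one integrates in $y$ so that $L_{k,l,t}$ drops out). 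The quadratic prefactor $(\alpha\alpha_k\cdots\alpha_{n-1})^2$ in \eqref{eq:lemmateleineq2} appears because each application of the identity contributes a factor like $\alpha_j^2$ through the coefficient $\tfrac12(1+\alpha^{-2}\sum\alpha_m^2)$ in $L_{k,l,t}$ and the $t^2\alpha_m^2$ weight in $K_{k,l,m,t}$.

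The inductive step from $k$ to $k-1$ is where the real work lies, and it follows the Cauchy--Schwarz/telescoping scheme of \cite{DKT16:splx}. Starting from $\Lambda_{k-1,l,a,b}$ or $\widetilde\Lambda_{k-1,l,a,b}$, I would first use $\mathcal{F}_{k-1}=\mathcal{F}_k\cdot(\text{shifted copies})$ together with the observation that $\mathcal{F}_k$ factors as a product over $i=0,\dots,k$ while the new variable $u_k$ only enters through the Gaussian kernels. The plan is: (i) insert the partition-of-unity/heat-semigroup identity from Lemma~\ref{lemma:Gaussians} at level $k$, writing the $t$-integral of the innermost expression as a telescoping difference of $M_{k,t}$ terms plus the sum of $K_{k,l,m,t}$ terms; (ii) the telescoping (boundary) terms are handled by the induction hypothesis at widths $a$ and $b$ — or are simply bounded by $1$ — contributing an acceptable error; (iii) the $K_{k,l,m,t}$ terms, being nonnegative convolution squares (this is the point of the sign conventions flagged before Lemma~\ref{lemma:Gaussians}), are estimated by Cauchy--Schwarz in the hidden variable $p_m$ (or in $x$), which splits one factor $\mathbbm{h}^{(l)}_{t\alpha_m}(p_m)\mathbbm{h}^{(l)}_{t\alpha_m}(u_m-p_m)$ symmetrically and promotes one appearance of $\mathbbm{1}_A$ to the ``doubled'' form appearing in $\mathcal{F}_{k-1}$ versus $\mathcal{F}_k$; after this step the resulting expression is bounded by $\widetilde\Lambda_{k,\cdot}$ or $\Lambda_{k,\cdot}$ with a new choice of dilation parameters (new $\alpha$'s produced by convolving Gaussians via \eqref{eq:teleconv0}--\eqref{eq:teleconv}, which is why the hypothesis allows $\alpha_j\in[2^{-(n-k)/2},\infty)$ rather than a fixed value), and one extra power of $\log(b/a)$ has been absorbed.

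The exponent $1-2^{-k+1}$ is exactly what this Cauchy--Schwarz recursion produces: if $\widetilde\Lambda_{k}\lesssim L^{1-2^{-k+1}}$ with $L=\log(b/a)$, then splitting into $\sim L$ dyadic blocks in $t$, applying Cauchy--Schwarz over blocks (gaining $L^{1/2}$) and the induction hypothesis on each block (each of length comparable to the whole, hence contributing $L^{1-2^{-k+1}}$ up to the $b\ge 3a$ condition that guarantees each block is ``fat enough''), gives $L^{1/2}\cdot L^{(1-2^{-k+1})/2}=L^{1-2^{-k}}$, i.e.\ the bound at level $k-1$; at the top this yields $J^{1-2^{-n+2}}$ after identifying $L\sim J$. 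The main obstacle, and the step I would spend the most care on, is the bookkeeping of the dilation parameters through the convolution identities \eqref{eq:teleconv0}--\eqref{eq:teleconv}: each Cauchy--Schwarz splitting followed by a Gaussian convolution changes $\alpha_j$ to something like $\sqrt{\alpha_j^2+\alpha_{j'}^2}$ or $\alpha_j/\sqrt{\alpha_j^2+\alpha_{j'}^2}$, and one must verify that these stay in the admissible range $[2^{-(n-k)/2},\infty)$ at every level and that the accumulated quadratic factor is exactly $(\alpha\alpha_k\cdots\alpha_{n-1})^2$ — this is routine but error-prone, and it is the reason the statement carries that specific prefactor and that specific lower bound on the $\alpha$'s. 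A secondary technical point is justifying all the Fubini interchanges and the absolute convergence needed to move the absolute values inside/outside before and after Cauchy--Schwarz, which is harmless because every kernel is a Schwartz function and $A$ is bounded, but should be remarked on once.
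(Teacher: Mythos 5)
Your plan runs the induction in the wrong direction, and this is not a cosmetic issue: the base case you propose is precisely the statement the paper cannot prove. In the paper's scheme the index $k$ measures how long an arithmetic progression survives inside $\mathcal{F}_k$: at $k=1$ the pattern is fully ``doubled'' into a cube-type configuration (this is the content of $\mathcal{G}^{\beta_1,\ldots,\beta_{n-1}}$), and only there can one run Cauchy--Schwarz, recognize the resulting $\Theta$-quantities as nonnegative squares, and use the telescoping identity \eqref{eq:Gaussianidentity2} termwise to get the uniform bound \eqref{eq:lemmateleineq1}. At $k=n-1$ the quantity $\Lambda_{n-1}$ (resp. $\widetilde\Lambda_{n-1}$) contains the full $n$-term progression $\mathcal{F}_{n-1}=\mathcal{F}$ with absolute values inside the $t$-integral; you cannot ``integrate $\mathcal{F}$ against the telescoping difference'' because the telescoping happens under those absolute values, and even absent the absolute values the integrand has no positivity. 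A bound $\widetilde\Lambda_{n-1}\lesssim 1$ would remove the $\log(b/a)$ factor from \eqref{eq:lemmateleineq2} at $k=n-1$, which the paper explicitly flags (right after the lemma) as a difficult open problem tied to uniform bounds for multilinear Hilbert transforms — so your base case is unavailable. Note also that \eqref{eq:lemmateleineq1} is the $k=1$ statement, not the $k=n-1$ one; your parenthetical ``for $k=1$ one integrates in $y$ so that $L_{k,l,t}$ drops out'' shows the two endpoints have been swapped.

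The exponent bookkeeping confirms the mismatch. The lemma's exponents $1-2^{-k+1}$ \emph{increase} with $k$ (best at $k=1$, worst at $k=n-1$), so any correct recursion must go upward in $k$, losing half of the remaining gain at each step: in the paper one writes $\Lambda_k\le\mathcal{S}^{1/2}\mathcal{T}^{1/2}$ with $\mathcal{S}\lesssim\log(b/a)$ trivially, identifies $\mathcal{T}$ with $\Theta_{k,l,k,a,b}$, and controls it through identity \eqref{eq:Gaussianidentity1}: the boundary terms $\Xi_{k,t}$ are bounded by $1$, the $\Theta_{k,l,m}$ are nonnegative, and the genuinely new term is $\Psi_{k,l}$ (coming from $L_{k,l,t}$), which is bounded by $\widetilde\Lambda_{k-1}$ at rescaled parameters — this is where the induction hypothesis and the lower bound $\alpha_j\ge 2^{-(n-k)/2}$ (to absorb the factor $1+\alpha^{-2}\sum\alpha_m^2$ and the $2^{-1/2}$ rescaling) enter, and where the prefactor $(\alpha\alpha_k\cdots\alpha_{n-1})^2$ is actually produced. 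Your proposed step from $k$ to $k-1$ via Cauchy--Schwarz over dyadic $t$-blocks yields exponent $1-2^{-k}$, which is \emph{larger} than the hypothesis $1-2^{-k+1}$ and nowhere near the target $1-2^{-k+2}$ at level $k-1$; iterating it downward would give bounds that blow up rather than improve, contradicting \eqref{eq:lemmateleineq1}. So both the starting point and the recursion need to be reversed: start at $k=1$ with the positivity-plus-telescoping argument, and at each upward step use one Cauchy--Schwarz (producing the nonnegative $\Theta_{k,l,k}$), the Gaussian identity of Lemma~\ref{lemma:Gaussians}, the trivial $\Xi\le 1$ bound, and the induction hypothesis only to control the $\Psi$-term; finally pass from $\Lambda_k$ to $\widetilde\Lambda_k$ by dominating $\mathbbm{h}^{(l)}$ by a superposition of Gaussians as in \eqref{eq:Gaussiandom}.
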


\begin{proof}
To prove the lemma we induct on $k=\{1,\ldots,n-1\}$. More precisely, by the mathematical induction on $k$ we simultaneously prove that quantities $\Lambda_{k,l,a,b}^{\alpha,\alpha_k,\ldots, \alpha_{n-1}}$ and $\widetilde{\Lambda}_{k,l,a,b}^{\alpha,\alpha_k,\ldots, \alpha_{n-1}}$ satisfy the claimed Estimates \eqref{eq:lemmateleineq1} and \eqref{eq:lemmateleineq2}.

For the induction basis $k=1$ we only have one of the two quantities defined, i.e.\@ we need to show only one estimate, which is \eqref{eq:lemmateleineq1}.
Inserting the definition of $\mathcal{F}_1$, changing the variable $y$ by introducing $z=x+y$, and also changing $p_1\rightarrow p_1-x$, we see that $\widetilde{\Lambda}_{1,l,a,b}^{\alpha,\alpha_1,\ldots,\alpha_{n-1}}$ equals
\begin{align*}
\int_{a}^{b} \int_{(\mathbb{R}^d)^{2(n-2)}} & \bigg| \int_{\mathbb{R}^d} \bigg( \int_{\mathbb{R}^d} \Big( \prod_{(r_{2},\ldots, r_{n-1})\in\{0,1\}^{n-2}} \mathbbm{1}_{A}\Big(x + \sum_{s=2}^{n-1} r_s (s-1) u_s\Big) \Big) \\
& \qquad\qquad\qquad\qquad\qquad\qquad\qquad \times \mathbbm{h}^{(l)}_{t\alpha}(-x+p_{1}+\cdots +p_{n-1}) \,\textup{d}x \bigg) \\
& \times \bigg( \int_{\mathbb{R}^d} \Big ( \prod_{(r_{2},\ldots, r_{n-1})\in\{0,1\}^{n-2}} \mathbbm{1}_{A}\Big(z + \sum_{s={2}}^{n-1} r_s s u_s\Big)\Big)
\, \mathbbm{h}^{(l)}_{t\alpha_1}(z-p_{1}) \,\textup{d}z \bigg) \,\textup{d}p_1 \bigg| \\
& \times \Big ( \prod_{j=2}^{n-1} \mathbbm{g}_{t\alpha_j}(p_j) \mathbbm{g}_{t\alpha_j}(u_j-p_j) \Big ) \,\textup{d}p_2\cdots \textup{d}p_{n-1} \,\textup{d}u_2\cdots \textup{d}u_{n-1} \frac{\textup{d}t}{t}.
\end{align*}
We use the triangle inequality for the integral in $p_1$ and the Cauchy--Schwarz inequality in $p_1,\ldots, p_{n-1}$, $u_2,\ldots, u_{n-1}$, and $t$. Then we also shift $p_1 \rightarrow p_1-p_2-\cdots-p_{n-1}$ in the first factor. That way we bound
\begin{equation}\label{eq:iljlaux1}
\widetilde{\Lambda}_{1,l,a,b}^{\alpha,\alpha_1,\ldots,\alpha_{n-1}} \leq \mathcal{I}^{1/2} \,\mathcal{J}^{1/2},
\end{equation}
where
\begin{align*}
\mathcal{I} := \int_{a}^{b} \int_{(\mathbb{R}^d)^{2n-3}} & \bigg( \int_{\mathbb{R}^d} \Big( \prod_{(r_{2},\ldots, r_{n-1})\in\{0,1\}^{n-2}} \mathbbm{1}_{A}\Big(x + \sum_{s=2}^{n-1} r_s (s-1) u_s\Big)\Big) \,\mathbbm{h}^{(l)}_{t\alpha}(x-p_{1}) \,\textup{d}x \bigg)^2 \\
& \times \Big( \prod_{j=2}^{n-1} \mathbbm{g}_{t\alpha_j}(p_j) \mathbbm{g}_{t\alpha_j}(u_j-p_j) \Big) \,\textup{d}p_1\cdots \textup{d}p_{n-1} \,\textup{d}u_2\cdots \textup{d}u_{n-1} \frac{\textup{d}t}{t}
\end{align*}
and
\begin{align*}
\mathcal{J} := \int_{a}^{b} \int_{(\mathbb{R}^d)^{2n-3}} & \bigg( \int_{\mathbb{R}^d} \Big ( \prod_{(r_{2},\ldots, r_{n-1})\in\{0,1\}^{n-2}} \mathbbm{1}_{A}\Big(x + \sum_{s={2}}^{n-1} r_s s u_s\Big)\Big) \,\mathbbm{h}^{(l)}_{t\alpha_1}(x-p_{1}) \,\textup{d}x \bigg)^2 \\
& \times \Big( \prod_{j=2}^{n-1} \mathbbm{g}_{t\alpha_j}(p_j) \mathbbm{g}_{t\alpha_j}(u_j-p_j) \Big) \,\textup{d}p_1\cdots \textup{d}p_{n-1} \,\textup{d}u_2\cdots \textup{d}u_{n-1} \frac{\textup{d}t}{t}.
\end{align*}
In each of the two expressions above we write the square of the integral in $x$ as a double integral in $x$ and $x'$, then we change the variable $x'$ to $x+u_1$, and finally shift $p_1 \rightarrow p_1+x$.
That way we can recognize $\mathcal{I}$ and $\mathcal{J}$ respectively as
\begin{equation}\label{eq:iljlaux2}
\mathcal{I} = \widetilde{\Theta}_{l,1,a,b}^{\alpha,\alpha_2,\ldots, \alpha_{n-1},1,1,2,\ldots,n-2},\quad
\mathcal{J} = \widetilde{\Theta}_{l,1,a,b}^{\alpha_1,\alpha_2,\ldots, \alpha_{n-1},1,2,3,\ldots,n-1},
\end{equation}
where we define, quite generally,
\[ \widetilde{\Theta}_{l,m,a,b}^{\alpha_1,\alpha_2,\ldots, \alpha_{n-1},\beta_1,\beta_2,\beta_3,\ldots,\beta_{n-1}}
:= \int_{a}^{b} \int_{(\mathbb{R}^d)^{n}} \mathcal{G}^{\beta_1,\ldots,\beta_{n-1}} \widetilde{K}_{l,m,t}
\,\textup{d}u_{1}\cdots \textup{d}u_{n-1} \,\textup{d}x \,\frac{\textup{d}t}{t} \]
for $m\in\{1,2,\ldots,n-1\}$, $\alpha_1,\alpha_2,\ldots, \alpha_{n-1}\in (0,\infty)$, and $\beta_1,\beta_2,\ldots,\beta_{n-1}\in\mathbb{R}\setminus\{0\}$,
and we recall that $\mathcal{G}^{\beta_1,\ldots,\beta_{n-1}}$ and $\widetilde{K}_{l,m,t}$ were introduced in \eqref{eq:Gisdefined} and \eqref{eq:Ktildeisdefined}, respectively.
Recalling \eqref{eq:iljlaux1} and \eqref{eq:iljlaux2} we see that \eqref{eq:lemmateleineq1} will follow from the bound
\begin{equation}\label{eq:iljlaux3}
\widetilde{\Theta}_{l,m,a,b}^{\alpha_1,\alpha_2,\ldots, \alpha_{n-1},\beta_1,\beta_2,\beta_3,\ldots,\beta_{n-1}} \lesssim 1,
\end{equation}
which will be shown to hold uniformly in all of the parameters: $l$, $m$, $a$, $b$, $\alpha_1,\ldots, \alpha_{n-1}$, $\beta_1,\ldots,\beta_{n-1}$ and even in $n$ and $d$.

On $\widetilde{\Theta}_{l,m,a,b}^{\alpha_1,\ldots,\beta_{n-1}}$ we perform the reverse of the procedure applied to $\mathcal{I}$ and $\mathcal{J}$. Introducing the variable $x'=x+\beta_m u_m$, shifting $p_m\rightarrow p_m-x/\beta_m$, and observing that the integrals in $x$ and $x'$ are the same, we obtain
\begin{align*}
\widetilde{\Theta}_{l,m,a,b}^{\alpha_1,\ldots,\beta_{n-1}} = \int_{a}^{b} \int_{(\mathbb{R}^d)^{2n-3}}
& \bigg( \int_{\mathbb{R}^d} \prod_{(r_1,\ldots,r_{m-1},r_{m+1},\ldots,r_{n-1})\in\{0,1\}^{n-2}}
\mathbbm{1}_{A}\Big( x + \sum_{\substack{1\leq s\leq n-1 \\s\neq m}} r_s \beta_s u_s \Big) \\
& \times \mathbbm{h}^{(l)}_{t\alpha_m}\Big(\frac{x}{\beta_m}-p_{m}\Big) \,\textup{d}x \bigg)^2
\Big( \prod_{\substack{1\leq j\leq n-1\\ j\neq m}} \mathbbm{g}_{t\alpha_j}(p_j) \mathbbm{g}_{t\alpha_j}(u_j-p_j) \Big) \\
& \,\textup{d}p_1\cdots \textup{d}p_{n-1} \,\textup{d}u_{1}\cdots\textup{d}u_{m-1}\,\textup{d}u_{m+1}\cdots\textup{d}u_{n-1} \,\frac{\textup{d}t}{t},
\end{align*}
which clearly shows that
\begin{equation}\label{eq:telebasis1}
\widetilde{\Theta}_{l,m,a,b}^{\alpha_1,\ldots,\beta_{n-1}}\geq 0
\end{equation}
for every $l$ and $m$. We also find it convenient to define
\[ \widetilde{\Xi}_{t}^{\alpha_1,\ldots,\beta_{n-1}} := \int_{(\mathbb{R}^d)^{n}} \mathcal{G}^{\beta_1,\ldots,\beta_{n-1}} \widetilde{M}_{t}
\,\textup{d}u_{1}\cdots \textup{d}u_{n-1} \,\textup{d}x \]
for $t>0$, so that Identity~\eqref{eq:Gaussianidentity2} gives
\[ \sum_{l=1}^d \sum_{m=1}^{n-1} \widetilde{\Theta}_{l,m,a,b}^{\alpha_1,\ldots,\beta_{n-1}}
= \pi \big( \widetilde{\Xi}_{a}^{\alpha_1,\ldots,\beta_{n-1}} - \widetilde{\Xi}_{b}^{\alpha_1,\ldots,\beta_{n-1}} \big). \]
Because of $\widetilde{M}_{t}\geq 0$ we also have
\[ \widetilde{\Xi}_{b}^{\alpha_1,\ldots,\beta_{n-1}}\geq 0, \]
while making a rough estimate
\[ \mathcal{G}^{\beta_1,\ldots,\beta_{n-1}} \leq \mathbbm{1}_{A}(x) \leq \mathbbm{1}_{[0,1]}(x) \]
and integrating in the order $x$, $u_1,\ldots,u_{n-1}$, $p_1,\ldots,p_{n-1}$ we get
\[ \widetilde{\Xi}_{a}^{\alpha_1,\ldots,\beta_{n-1}} \leq 1. \]
Therefore,
\begin{equation}\label{eq:telebasis2}
\sum_{l=1}^d \sum_{m=1}^{n-1} \widetilde{\Theta}_{l,m,a,b}^{\alpha_1,\ldots,\beta_{n-1}} \leq \pi.
\end{equation}
In remains to note that the left hand side of \eqref{eq:telebasis2} is a sum of nonnegative terms \eqref{eq:telebasis1}, so each of these terms is also bounded from above by $\pi$. This proves Estimate \eqref{eq:iljlaux3}.

Now we turn to the inductive step: for a fixed $k\in\{2,\ldots,n-1\}$ we need to deduce \eqref{eq:lemmateleineq2} assuming that the estimates (either \eqref{eq:lemmateleineq1}, or \eqref{eq:lemmateleineq2}, depending on $k$) hold for $k-1$ in place of $k$.
We first handle ${\Lambda}_{k,l,a,b}^{\alpha,\alpha_k,\ldots, \alpha_{n-1}}$, with all of its parameters as in the statement of the lemma.
In the definition of $\mathcal{F}_k$ we split the product in $i$ into the factor corresponding to $i=0$ and the factors corresponding to $i\geq 1$.
Applying the Cauchy--Schwarz inequality in all variables but $y$, we obtain the bound
\begin{equation}\label{eq:telestep1}
{\Lambda}_{k,l,a,b}^{\alpha,\alpha_k,\ldots, \alpha_{n-1}} \leq \mathcal{S}^{1/2}\,\mathcal{T}^{1/2} ,
\end{equation}
where
\begin{align*}
\mathcal{S} := \int_{a}^{b} \int_{(\mathbb{R}^d)^{2(n-k)}} & \bigg( \prod_{(r_{k+1},\ldots, r_{n-1})\in\{0,1\}^{n-k-1}} \mathbbm{1}_{A} \Big(x + \sum_{s={k+1}}^{n-1} r_s(s-k)u_s \Big) \bigg) \\
& \times \mathbbm{g}_{t\alpha}(p_k+\cdots +p_{n-1}) \Big( \prod_{j=k+1}^{n-1} \mathbbm{g}_{t\alpha_j}(p_j) \mathbbm{g}_{t\alpha_j}(u_j-p_j) \Big) \\
& \,\textup{d}p_k\cdots \textup{d}p_{n-1} \,\textup{d}u_{k+1}\cdots \textup{d}u_{n-1} \,\textup{d}x \,\frac{\textup{d}t}{t}
\end{align*}
and
\begin{align*}
\mathcal{T} := \int_{a}^{b} \int_{(\mathbb{R}^d)^{2(n-k)}} & \bigg( \int_{\mathbb{R}^d} \prod_{i=1}^{k} \prod_{(r_{k+1},\ldots, r_{n-1})\in\{0,1\}^{n-k-1}} \mathbbm{1}_{A} \Big( x+i y + \sum_{s={k+1}}^{n-1} r_s (i+s-k) u_s \Big) \\
& \times \mathbbm{h}^{(l)}_{t\alpha_k}(y-p_k) \,\textup{d}y \bigg)^2
\mathbbm{g}_{t\alpha}(p_k+\cdots +p_{n-1}) \Big( \prod_{j=k+1}^{n-1} \mathbbm{g}_{t\alpha_j}(p_j) \mathbbm{g}_{t\alpha_j}(u_j-p_j) \Big) \\
& \,\textup{d}p_k\cdots \textup{d}p_{n-1} \,\textup{d}u_{k+1}\cdots \textup{d}u_{n-1} \,\textup{d}x \,\frac{\textup{d}t}{t} .
\end{align*}

To bound $\mathcal{S}$, we estimate the product involving $x$ simply by $\mathbbm{1}_{[0,1]}(x)$. Then we integrate in the order
$x$, $u_{k+1},\ldots,u_{n-1}$, $p_k,\ldots,p_{n-1}$, $t$, which yields
\begin{equation}\label{eq:telestep2}
\mathcal{S} \leq \log\frac{b}{a}.
\end{equation}
To bound $\mathcal{T}$ we expand out the square, for which we introduce the variable $y'$ as a copy of the variable $y$. Introducing the variable $u_k=y'-y$, changing the order of integration, and shifting $x\rightarrow x-y$ we obtain
{\allowdisplaybreaks\begin{align*}
\mathcal{T} = \int_{a}^{b} \int_{(\mathbb{R}^d)^{2(n-k+1)}} &
\bigg( \prod_{i=1}^{k} \prod_{(r_{k+1},\ldots, r_{n-1})\in\{0,1\}^{n-k-1}} \\
& \mathbbm{1}_{A} \Big( x+(i-1)y + \sum_{s={k+1}}^{n-1} r_s (i+s-k) u_s \Big) \,\mathbbm{h}^{(l)}_{t\alpha_k}(y-p_k) \bigg) \\
& \times \bigg( \prod_{i=1}^{k} \prod_{(r_{k+1},\ldots, r_{n-1})\in\{0,1\}^{n-k-1}} \\
& \qquad \mathbbm{1}_{A} \Big( x+(i-1)y + i u_k + \sum_{s={k+1}}^{n-1} r_s (i+s-k) u_s \Big) \, \mathbbm{h}^{(l)}_{t\alpha_k}(y+u_k-p_k) \bigg) \\
& \times \mathbbm{g}_{t\alpha}(p_k+\cdots +p_{n-1}) \Big( \prod_{j=k+1}^{n-1} \mathbbm{g}_{t\alpha_j}(p_j) \mathbbm{g}_{t\alpha_j}(u_j-p_j) \Big) \\
& \,\textup{d}p_k\cdots \textup{d}p_{n-1} \,\textup{d}u_{k}\cdots \textup{d}u_{n-1} \,\textup{d}x \,\textup{d}y \,\frac{\textup{d}t}{t}.
\end{align*}}
If we also change $i\rightarrow i+1$ and $p_k\rightarrow p_k+y$, then we can recognize the last display as the special case $m=k$ of the following expression defined for $l\in\{1,\ldots,d\}$ and $m\in\{k,\ldots,n-1\}$:
\begin{equation}\label{eq:Thetaisdefined}
\Theta_{k,l,m,a,b}^{\alpha,\alpha_k,\ldots, \alpha_{n-1}} := \int_{a}^{b} \int_{(\mathbb{R}^d)^{n-k+2}} \mathcal{F}_{k-1} K_{k,l,m,t} \,\textup{d}u_{k}\cdots \textup{d}u_{n-1} \,\textup{d}x \,\textup{d}y \frac{\textup{d}t}{t},
\end{equation}
i.e.,
\begin{equation}\label{eq:telestep2a}
\mathcal{T} = \Theta_{k,l,k,a,b}^{\alpha,\alpha_k,\ldots, \alpha_{n-1}}.
\end{equation}
Let us also define
\[ \Psi_{k,l,a,b}^{\alpha,\alpha_k,\ldots, \alpha_{n-1}} := \int_{a}^{b} \int_{(\mathbb{R}^d)^{n-k+2}} \mathcal{F}_{k-1} L_{k,l,t} \,\textup{d}u_{k}\cdots \textup{d}u_{n-1} \,\textup{d}x \,\textup{d}y \frac{\textup{d}t}{t} \]
and
\[ \Xi_{k,t}^{\alpha,\alpha_k,\ldots, \alpha_{n-1}} := \int_{(\mathbb{R}^d)^{n-k+2}} \mathcal{F}_{k-1} M_{k,t} \,\textup{d}u_{k}\cdots \textup{d}u_{n-1} \,\textup{d}x \,\textup{d}y \]
for $t>0$.
Identity \eqref{eq:Gaussianidentity1} implies
\begin{equation}\label{eq:telestep3}
\sum_{l=1}^d \Big( \Psi_{k,l,a,b}^{\alpha,\alpha_k,\ldots, \alpha_{n-1}} + \sum_{m=k}^{n-1} \Theta_{k,l,m,a,b}^{\alpha,\alpha_k,\ldots, \alpha_{n-1}} \Big ) = \pi \big( \Xi_{k,a}^{\alpha,\alpha_k,\ldots, \alpha_{n-1}} - \Xi_{k,b}^{\alpha,\alpha_k,\ldots, \alpha_{n-1}} \big).
\end{equation}

Now we want to show that for each $l$ and $m$ the expression $\Theta_{k,l,m,a,b}^{\alpha,\alpha_k,\ldots, \alpha_{n-1}}$ is non-negative. This is clear for $m=k$ as it resulted from an application of the Cauchy--Schwarz inequality.
To see this in general, one performs a change of variables
$x\rightarrow x+(m-k+1)y$ and $p_m\rightarrow p_m-y$ in \eqref{eq:Thetaisdefined}, yielding
\begin{align*}
\int_{a}^{b} \int_{(\mathbb{R}^d)^{2(n-k+1)}} & \prod_{i=0}^{k-1} \prod_{(r_{k},\ldots, r_{n-1})\in\{0,1\}^{n-k}} \\
& \mathbbm{1}_{A} \Big(x+(i+m-k+1)(y+r_mu_m) + \sum_{\substack{k\leq s\leq n-1\\ s\neq m}} r_s (i+s-k+1) u_s \Big) \\
& \times \mathbbm{h}^{(l)}_{t\alpha_m}(y-p_m) \mathbbm{h}^{(l)}_{t\alpha_m}(y+u_m-p_m) \mathbbm{g}_{t\alpha}(p_k+\cdots +p_{n-1}) \\
& \times \Big( \prod_{\substack{k\leq j\leq n-1\\ j\neq m}} \mathbbm{g}_{t\alpha_j}(p_j) \mathbbm{g}_{t\alpha_j}(u_j-p_j) \Big) \,\textup{d}p_k\cdots \textup{d}p_{n-1} \,\textup{d}u_{k}\cdots \textup{d}u_{n-1} \,\textup{d}x \,\textup{d}y \,\frac{\textup{d}t}{t}.
\end{align*}
After the change of variable $u_m$ to $y'=y+u_m$, we observe that the only terms involving variables $y$ and $y'$ are those with functions $\mathbbm{1}_A$ and $\mathbbm{h}^{(l)}_{t\alpha_m}$ and that the integrals in $y$ and $y'$ can be assembled into a square, so
\begin{equation}\label{eq:telestep4}
\Theta_{k,l,m,a,b}^{\alpha,\alpha_k,\ldots, \alpha_{n-1}} \geq 0.
\end{equation}

Next, we turn to controlling $\Psi_{k,l,a,b}^{\alpha,\alpha_k,\ldots, \alpha_{n-1}}$. Using the second convolution identity from \eqref{eq:teleconv} in the form
\begin{align*}
\mathbbm{k}^{(l)}_{t\alpha}(y+p_k+\cdots +p_{n-1}) = 2\int_{\mathbb{R}^d} \mathbbm{h}^{(l)}_{t 2^{-1/2}\alpha} (y- p_{k-1}) \mathbbm{h}^{(l)}_{t 2^{-1/2}\alpha}(p_{k-1}+\cdots + p_{n-1} ) \,\textup{d}p_{k-1},
\end{align*}
we can bound
\[ \big|\Psi_{k,l,a,b}^{\alpha,\alpha_k,\ldots, \alpha_{n-1}}\big|
\leq \Big(1+ \alpha^{-2} \sum_{m=k}^{n-1} \alpha_m^2 \Big)
\,\widetilde{\Lambda}_{k-1,l,a,b}^{2^{-1/2}\alpha, 2^{-1/2}\alpha, \alpha_k,\ldots,\alpha_{n-1}}. \]
By the induction hypothesis (i.e., the statement for $k-1$) applied to each fixed $l$ and using $\alpha\gtrsim_n 1$, $\alpha_m\gtrsim_n 1$ we may therefore estimate
\begin{align}
\big|\Psi_{k,l,a,b}^{\alpha,\alpha_k,\ldots, \alpha_{n-1}}\big|
& \lesssim_{n,d} \alpha^{-2} \Big( \alpha^2 + \sum_{m=k}^{n-1} \alpha_m^2 \Big) (\alpha^2\alpha_k\cdots \alpha_{n-1})^2 \Big(\log\frac{b}{a}\Big)^{1-2^{-k+2}} \nonumber \\
& \lesssim_n (\alpha \alpha_k \cdots \alpha_{n-1})^{4} \Big(\log\frac{b}{a}\Big)^{1-2^{-k+2}}. \label{eq:telestep5}
\end{align}

Finally, quantities $\Xi_{k,t}^{\alpha,\alpha_k,\ldots, \alpha_{n-1}}$ are clearly nonnegative and they can be bounded from above by simply estimating
\[ \mathcal{F}_{k-1} \leq \mathbbm{1}_{A}(x) \leq \mathbbm{1}_{[0,1]}(x) \]
and integrating in the order $x,y$, $u_k,\ldots,u_{n-1}$, $p_k,\ldots,p_{n-1}$, which gives
\begin{equation}\label{eq:telestep6}
\Xi_{k,t}^{\alpha,\alpha_k,\ldots, \alpha_{n-1}} \leq 1.
\end{equation}

Combining \eqref{eq:telestep3}, \eqref{eq:telestep5}, and \eqref{eq:telestep6} we see
\begin{align*}
\sum_{l=1}^d \sum_{m=k}^{n-1} \Theta_{k,l,m,a,b}^{\alpha,\alpha_k,\ldots,\alpha_{n-1}}
& \leq \pi + \sum_{l=1}^d \big|\Psi_{k,l,a,b}^{\alpha,\alpha_k,\ldots,\alpha_{n-1}}\big| \\
& \lesssim_{n,d} (\alpha \alpha_k \cdots \alpha_{n-1})^{4} \Big(\log\frac{b}{a}\Big)^{1-2^{-k+2}}.
\end{align*}
Because of \eqref{eq:telestep4} each individual $\Theta_{k,l,m,a,b}^{\alpha,\alpha_k,\ldots, \alpha_{n-1}}$ satisfies the same bound.
Together with \eqref{eq:telestep1}--\eqref{eq:telestep2a} this proves
\begin{equation}\label{eq:telesteplambda}
\Lambda_{k,l,a,b}^{\alpha,\alpha_k,\ldots, \alpha_{n-1}} \lesssim_{n,d} (\alpha\alpha_k\cdots \alpha_{n-1})^2 \Big(\log\frac{b}{a}\Big)^{1-2^{-k+1}},
\end{equation}
just as we wanted.

At last, we deduce the desired bound for $\widetilde{\Lambda}_{k,l,a,b}^{\alpha,\alpha_k,\ldots, \alpha_{n-1}}$ from \eqref{eq:telesteplambda}.
Substituting $\gamma=\pi\|z\|_{\ell^2}^2\beta^{-2}$ and using the definition of the gamma-function $\Gamma$, it is easy to compute
\begin{align*}
\lim_{\substack{z\in\mathbb{R}^d\\ \|z\|_{\ell^2}\to\infty}} \|z\|_{\ell^2}^{d+3} \int_{1}^{\infty} \mathbbm{g}_\beta(z) \frac{\textup{d}\beta}{\beta^{4}}
& = \frac{1}{2}\pi^{-(d+3)/2} \int_{0}^{\infty} \gamma^{(d+1)/2} e^{-\gamma} \,\textup{d}\gamma \\
& = \frac{1}{2}\pi^{-(d+3)/2} \Gamma\Big(\frac{d+3}{2}\Big) \gtrsim_d 1.
\end{align*}
Therefore, for $z\in \mathbb{R}^d$ we can dominate
\begin{equation}\label{eq:Gaussiandom}
(1+\|z\|_{\ell^2})^{-d-3} \lesssim_d \int_1^\infty \mathbbm{g}_\beta(z) \,\frac{\textup{d}\beta}{\beta^{4}}.
\end{equation}
Since Gaussian tails decay super-polynomially, in particular we have
\begin{equation}\label{eq:teledom}
\big|\mathbbm{h}^{(l)}(z)\big| \lesssim_d \int_1^\infty \mathbbm{g}_\beta(z) \,\frac{\textup{d}\beta}{\beta^{4}}
\end{equation}
for each $l\in\{1,\ldots,d\}$.
By the triangle inequality and \eqref{eq:teledom} applied with $z=p_k+\cdots+p_{n-1}$ we can then bound
\[ \widetilde{\Lambda}_{k,l,a,b}^{\alpha,\alpha_k,\ldots,\alpha_{n-1}} \lesssim_d \int_1^\infty \Lambda_{k,l,a,b}^{\alpha\beta,\alpha_k,\ldots, \alpha_{n-1} } \frac{\textup{d}\beta}{\beta^{4}}. \]
Using \eqref{eq:telesteplambda} and integrating in $\beta$ we get
\begin{align*}
\widetilde{\Lambda}_{k,l,a,b}^{\alpha,\alpha_k,\ldots,\alpha_{n-1}}
& \lesssim_{n,d} \int_1^\infty (\alpha\beta\alpha_k\cdots \alpha_{n-1})^2 \Big(\log\frac{b}{a}\Big)^{1-2^{-k+1}} \frac{\textup{d}\beta}{\beta^{4}} \\
& = (\alpha\alpha_k\cdots \alpha_{n-1})^2 \Big(\log\frac{b}{a}\Big)^{1-2^{-k+1}}.
\end{align*}
This completes the induction step and also finishes the proof of Lemma~\ref{lemma:telescoping}.
\end{proof}

We conjecture that the second estimate in the formulation of Lemma~\ref{lemma:telescoping} holds without the factor containing $\log(b/a)$ on the right hand side. However, removing this factor seems to be a difficult task; see the related discussion at the beginning of this section. Fortunately, Lemma~\ref{lemma:telescoping} will be sufficient for the intended application.

Remember that a function $\varphi$ was chosen in Section~\ref{sec:mainproof} in order to define quantities \eqref{eq:nepsdefinition}.
If we use the same $\varphi$ to define $\varrho\colon\mathbb{R}^d\to\mathbb{R}$ by
\begin{equation}\label{eq:defofrho}
\varrho(x) := d \,\varphi(x) + (\nabla\varphi)(x) \cdot x,
\end{equation}
then
\[ -t \frac{\partial}{\partial t} \big( \varphi_t(x) \big) = \frac{1}{t^d} \Big( d \varphi\Big(\frac{x}{t}\Big) + \sum_{i=1}^{d} (\partial_i\varphi)\Big(\frac{x}{t}\Big) \frac{x_i}{t} \Big) = \varrho_t(x). \]
Consequently,
\[ (\sigma\ast\varphi_{a})(y) - (\sigma\ast\varphi_{b})(y)
= \int_{a}^{b} (\sigma\ast\varrho_{t})(y) \,\frac{\textup{d}t}{t} \]
for $0<a<b$ and $y\in\mathbb{R}^d$.
Dilating the last identity we end up with a more general one,
\begin{equation}\label{eq:uniformscales}
(\sigma_{\lambda}\ast\varphi_{a\lambda})(y) - (\sigma_{\lambda}\ast\varphi_{b\lambda})(y)
= \int_{a}^{b} (\sigma_{\lambda}\ast\varrho_{t\lambda})(y) \,\frac{\textup{d}t}{t},
\end{equation}
which holds for any $\lambda>0$.
Observe that $\varrho$ is an even $\textup{C}^\infty$ function (because $\varphi$ was chosen to be even) and that it is still supported in $[-3,3]^d$.
In the next section we will also find convenient to define the vector field $v\colon\mathbb{R}^d\to\mathbb{R}^d$ as
\begin{equation}\label{eq:defofvectorv}
v(x) := \varphi(x)x.
\end{equation}
Observe that
\begin{equation}\label{eq:divofvectorv}
\mathop{\textup{div}}v(x) = \sum_{i=1}^{d} \frac{\partial}{\partial x_i} \big(\varphi(x)x_i\big)
= \sum_{i=1}^{d} \big( (\partial_i\varphi)(x)x_i + \varphi(x) \big) = \varrho(x)
\end{equation}
for each $x=(x_1,\ldots,x_d)\in\mathbb{R}^d$, simply by the definition of $\varrho$.
In particular, \eqref{eq:divofvectorv} gives $\widehat{\varrho}(0) =\int_{\mathbb{R}^d}\varrho =0$.

Now we are in position to prove Proposition~\ref{prop:error}.

\begin{proof}[Proof of Proposition~\ref{prop:error}]
Let $\theta$ be a Schwartz function on $\mathbb{R}^d$ such that $\widehat{\theta}$ is nonnegative, radial, supported in the annulus $1/2\leq\|\xi\|_{\ell^2}\leq 1$, and not identically zero. Without further notice it will be understood that all constants are allowed to depend on $\theta$.
From \eqref{eq:ftofk} we see that the function $-\widehat{\theta}\,\widehat{\mathbbm{k}}$ is also nonnegative and radial, so the integral
\[ - \int_{0}^{\infty} (\widehat{\theta_u\ast\mathbbm{k}_u})(\xi) \,\frac{\textup{d}u}{u}
= - \int_{0}^{\infty} \widehat{\theta}(u\xi) \,\widehat{\mathbbm{k}}(u\xi) \,\frac{\textup{d}u}{u} \]
is identically equal to a positive constant, except at the origin $\xi=0$. Multiplying $\theta$ with an appropriate positive factor we can achieve that the last integral is equal to $1$ for each $\xi\in\mathbb{R}^d\setminus\{0\}$.
We use dilates of $-\widehat{\theta}\,\widehat{\mathbbm{k}}$ to construct a continuous-parameter Littlewood--Paley partition of unity,
i.e., for any $f\in\textup{L}^2(\mathbb{R}^d)$ we have
\[ f = \lim_{\substack{v\to0^+\\V\to\infty}} \int_{v}^{V} f \ast \theta_u \ast \mathbbm{k}_u \,\frac{\textup{d}u}{u}, \]
where the convergence holds in the norm of $\textup{L}^2(\mathbb{R}^d)$.
That way we can write
\begin{equation}\label{eq:lpeq2a}
\sigma_{\lambda_j}\ast\varphi_{\varepsilon\lambda_j} - \sigma_{\lambda_j}\ast\varphi_{\lambda_j}
= \lim_{\substack{v\to0^+\\V\to\infty}} \int_{v}^{V} \big( \sigma_{\lambda_j} \ast \varphi_{\varepsilon\lambda_j} - \sigma_{\lambda_j} \ast \varphi_{\lambda_j} \big) \ast \theta_u \ast \mathbbm{k}_u \,\frac{\textup{d}u}{u},
\end{equation}
still in the $\textup{L}^2$-sense.
On the other hand, \eqref{eq:multiscaleHilbert1}, which is the quantity that we need to bound, equals
\[ \sum_{j=1}^{J} \kappa_j \int_{(\mathbb{R}^d)^2} \mathcal{F}_{n-1}(x,y) \,(\sigma_{\lambda_j}\ast\varphi_{\varepsilon\lambda_j} - \sigma_{\lambda_j}\ast\varphi_{\lambda_j})(y) \,\textup{d}y \,\textup{d}x, \]
where we recall that
\begin{equation}\label{eq:lpeq0}
2^{-j}<\lambda_j\leq 2^{-j+1}
\end{equation}
and that $\kappa_j$ are arbitrary complex signs.
Passing to appropriate (sub)sequences in order to obtain a.e.\@ convergence in \eqref{eq:lpeq2a} for each $j$ and using Fatou's lemma, we see that the desired bound for \eqref{eq:multiscaleHilbert1} will follow from the same bound for
\begin{equation}\label{eq:lpeq3a}
\sum_{j=1}^{J} \int_{0}^{\infty} \Big| \int_{(\mathbb{R}^d)^2} \mathcal{F}_{n-1}(x,y) \,\big( (\sigma_{\lambda_j}\ast\varphi_{\varepsilon\lambda_j} - \sigma_{\lambda_j}\ast\varphi_{\lambda_j}) \ast \theta_u \ast \mathbbm{k}_u \big)(y) \,\textup{d}y \,\textup{d}x \Big| \,\frac{\textup{d}u}{u} .
\end{equation}
From \eqref{eq:uniformscales} we get
\begin{equation}\label{eq:lpeq1}
\sigma_{\lambda_j}\ast\varphi_{\varepsilon\lambda_j} - \sigma_{\lambda_j}\ast\varphi_{\lambda_j}
= \int_{\varepsilon}^{1} \sigma_{\lambda_j}\ast\varrho_{t\lambda_j} \,\frac{\textup{d}t}{t}
\end{equation}
for every $j$.
We insert \eqref{eq:lpeq1} into \eqref{eq:lpeq3a}, use the triangle inequality for the integral in $t$, and change the variables $(t,u)\rightarrow(t,tu\lambda_j)$. That way we see that \eqref{eq:lpeq3a} is at most
\begin{equation}\label{eq:lpeq3b}
\sum_{j=1}^{J} \int_{0}^{\infty} \int_{\varepsilon}^{1} \Big| \int_{(\mathbb{R}^d)^2} \mathcal{F}_{n-1}(x,y) \,(\sigma_{\lambda_j} \ast \varrho_{t\lambda_j} \ast \theta_{tu\lambda_j} \ast \mathbbm{k}_{tu\lambda_j})(y) \,\textup{d}y \,\textup{d}x \Big| \,\frac{\textup{d}t}{t} \,\frac{\textup{d}u}{u} .
\end{equation}
The rest of the proof is concerned with bounding \eqref{eq:lpeq3b}.

Now we perform yet another decomposition on \eqref{eq:lpeq3b}, which will allow us to replace $t\lambda_j$ with a single parameter $s$.
Observe that
\begin{equation}\label{eq:lpeq3}
\frac{1}{\log 2} \int_{2^{-j-5}t}^{2^{-j-4}t} \,\frac{\textup{d}s}{s} = 1.
\end{equation}
For $j\in\{1,\ldots,J\}$, $t\in[\varepsilon,1]$, and $s\in[2^{-j-5}t,2^{-j-4}t]$ let us denote
\[ r_j(s,t) := \sqrt{t^2\lambda_j^2-s^2},\quad c_j(s,t) := \frac{t^2\lambda_j^2}{s\sqrt{t^2\lambda_j^2-s^2}}. \]
Assumption \eqref{eq:lpeq0} implies
\begin{equation}\label{eq:lpeq4}
s\sim t \lambda_j, \quad r_j(s,t)\sim t \lambda_j, \quad c_j(s,t)\sim 1
\end{equation}
for $j,t,s$ as above. Using the second convolution identity from \eqref{eq:teleconv0} we get
\begin{equation}\label{eq:lpeq5}
\mathbbm{k}_{tu\lambda_j} = c_j(s,t) \sum_{l=1}^{d} \mathbbm{h}^{(l)}_{r_j(s,t)u} \ast \mathbbm{h}^{(l)}_{su}
\end{equation}
for $l\in\{1,\ldots,d\}$.
We multiply the expression inside the absolute value signs in \eqref{eq:lpeq3b} by \eqref{eq:lpeq3}, interchange the integrals, and also substitute \eqref{eq:lpeq5} for $\mathbbm{k}_{tu\lambda_j}$. Using the triangle inequality for the sum in $l$ and the integral in $s$ and controlling $c_j(s,t)$ by \eqref{eq:lpeq4}, we see that \eqref{eq:lpeq3b} is at most a constant times
\begin{align*}
\sum_{j=1}^J \sum_{l=1}^{d} & \int_{0}^{\infty} \int_{\varepsilon}^{1} \int_{2^{-j-5}t}^{2^{-j-4}t} \Big| \int_{(\mathbb{R}^{d})^2} \mathcal{F}_{n-1}(x,y) \\
& \times \big(\sigma_{\lambda_j} \ast \varrho_{t\lambda_j} \ast \theta_{tu\lambda_j} \ast \mathbbm{h}^{(l)}_{r_j(s,t)u} \ast \mathbbm{h}^{(l)}_{su}\big)(y)
\,\textup{d}y \,\textup{d}x \Big| \,\frac{\textup{d}s}{s} \,\frac{\textup{d}t}{t} \,\frac{\textup{d}u}{u} .
\end{align*}
Expanding out the last convolution we dominate the whole display by
\begin{align}
\sum_{j=1}^J \sum_{l=1}^{d} & \int_{0}^{\infty} \int_{\varepsilon}^{1} \int_{2^{-j-5}t}^{2^{-j-4}t}
\int_{(\mathbb{R}^{d})^2} \Big| \int_{\mathbb{R}^d} \mathcal{F}_{n-1}(x,y) \,\mathbbm{h}^{(l)}_{su}(y-q) \,\textup{d}y \Big| \nonumber \\
& \times \big| (\sigma_{\lambda_j}\ast\varrho_{t\lambda_j}\ast\theta_{tu\lambda_j} \ast \mathbbm{h}^{(l)}_{r_j(s,t)u})(q) \big|
\,\textup{d}q \,\textup{d}x \,\frac{\textup{d}s}{s} \,\frac{\textup{d}t}{t} \,\frac{\textup{d}u}{u}. \label{eq:lpeq7}
\end{align}

We would like to dominate the four-tuple convolution inside the second pair of absolute value signs in \eqref{eq:lpeq7} by a superposition of Gaussians at scale $su$.
First, note that for any positive integers $M$ and $N$ one has
\begin{equation}\label{eq:lpdecay}
\big| \big(\varrho_{u^{-1}}\ast\theta\ast\mathbbm{h}^{(l)}_{r_j(s,t)t^{-1}\lambda_j^{-1}}\big)(q) \big| \lesssim_{d,M,N} \min\{u^M,u^{-1}\} (1 + \|q\|_{\ell^2})^{-N}
\end{equation}
for each $q\in\mathbb{R}^d$.
Indeed, \eqref{eq:lpdecay} can be seen with the aid of the Fourier inversion formula,
\[ \big(\varrho_{u^{-1}}\ast\theta\ast\mathbbm{h}^{(l)}_{r_j(s,t)t^{-1}\lambda_j^{-1}}\big)(q)
= \int_{\mathbb{R}^d} \widehat{\varrho}(u^{-1}\xi) \,\widehat{\theta}(\xi)
\,\widehat{\mathbbm{h}^{(l)}}\Big(\frac{r_j(s,t)}{t\lambda_j}\xi\Big) \,e^{2\pi \mathbbm{i} q\cdot \xi} \,\textup{d}\xi, \]
as follows.
Recalling $\widehat{\varrho}(0)=0$ and using $|\widehat{\varrho}(\xi)| \lesssim \min\{|\xi|^{-M},|\xi|\}$ we get
\[ \big| \big(\varrho_{u^{-1}}\ast\theta\ast\mathbbm{h}^{(l)}_{r_j(s,t)t^{-1}\lambda_j^{-1}}\big)(q) \big| \lesssim_{d,M} \min\{u^M,u^{-1}\}. \]
Also, for any $m\in\{1,\ldots,d\}$, writing $\xi=(\xi_1,\ldots,\xi_d)$ and integrating by parts $N$ times in variable $\xi_m$ we get
\[ \big(\varrho_{u^{-1}}\ast\theta\ast\mathbbm{h}^{(l)}_{r_j(s,t)t^{-1}\lambda_j^{-1}}\big)(q)
= \int_{\mathbb{R}^d} \frac{\partial^N}{\partial\xi_m^N} \bigg( \widehat{\varrho}(u^{-1}\xi) \,\widehat{\theta}(\xi) \,\widehat{\mathbbm{h}^{(l)}}\Big(\frac{r_j(s,t)}{t\lambda_j}\xi\Big) \bigg)
\,(2\pi \mathbbm{i} q_m)^{-N} e^{2\pi \mathbbm{i} q\cdot \xi} \,\textup{d}\xi. \]
If we also control the derivatives of $\widehat{\varrho}(\xi)$ by $\min\{|\xi|^{-M},1\}$ and take \eqref{eq:lpeq4} into account, we obtain
\[ \big| \big(\varrho_{u^{-1}}\ast\theta\ast\mathbbm{h}^{(l)}_{r_j(s,t)t^{-1}\lambda_j^{-1}}\big)(q) \big| \lesssim_{d,M,N} \min\{u^M,u^{-1}\} |q_m|^{-N}. \]
Repeating this for each $m$, we derive \eqref{eq:lpdecay}.
Next, adding one more convolution to \eqref{eq:lpdecay} we obtain, for $t\in[\varepsilon,1]$,
{\allowdisplaybreaks
\begin{align*}
& \big| \big(\sigma_{t^{-1}u^{-1}}\ast\varrho_{u^{-1}}\ast\theta\ast\mathbbm{h}^{(l)}_{r_j(s,t)t^{-1}\lambda_j^{-1}}\big)(q) \big| \\
& \lesssim_{d,M,N} \min\{u^M,u^{-1}\} \int_{\mathbb{R}^d} \Big(1 + \Big\|q-\frac{w}{tu}\Big\|_{\ell^2}\Big)^{-N} \,\textup{d}\sigma(w) \\
& \leq \min\{u^M,u^{-1}\} \,(1 + \|q\|_{\ell^2})^{-N} \int_{\mathbb{R}^d} \Big(1 + \frac{\|w\|_{\ell^2}}{tu}\Big)^{N} \,\textup{d}\sigma(w) \\
& \lesssim_{N,\sigma} \varepsilon^{-N} \min\{u^M,u^{-1}\} \max\{u^{-N},1\} \,(1 + \|q\|_{\ell^2})^{-N} .
\end{align*}
}
For $t$ and $s$ as before, we choose $N=d+3$, $M=N+1$ and rescale in $q$ by $t\lambda_j/s\sim 1$:
\[ \big| \big(\sigma_{s^{-1}u^{-1}\lambda_j}\ast\varrho_{t s^{-1}u^{-1}\lambda_j}\ast\theta_{t s^{-1}\lambda_j}\ast\mathbbm{h}^{(l)}_{r_j(s,t)s^{-1}}\big)(q) \big|
\lesssim_{p,d} \varepsilon^{-d-3} \min\{u,u^{-1}\} \,(1 + \|q\|_{\ell^2})^{-d-3} . \]
We use the Gaussian domination estimate \eqref{eq:Gaussiandom} and perform another rescaling in $q$, this time by $su$, to finally obtain
\begin{equation}\label{eq:lpeq6}
\big| \big(\sigma_{\lambda_j}\ast\varrho_{t\lambda_j}\ast\theta_{tu\lambda_j} \ast \mathbbm{h}^{(l)}_{r_j(s,t)u}\big)(q) \big|
\lesssim_{p,d} \varepsilon^{-d-3} \min\{u,u^{-1}\} \int_{1}^{\infty} \mathbbm{g}_{\beta su}(q) \,\frac{\textup{d}\beta}{\beta^4} .
\end{equation}

Using \eqref{eq:lpeq6}, interchanging the integrals, summing in $j$, and rescaling $s\rightarrow u^{-1}s$, we see that \eqref{eq:lpeq7} is at most a constant multiple of
\begin{align}
\varepsilon^{-d-3} \sum_{l=1}^d \int_1^\infty \int_0^\infty \min\{u,u^{-1}\} \int_{\varepsilon}^{1} \int_{2^{-J-5}tu}^{2^{-5}tu}    \int_{(\mathbb{R}^{d})^2} \Big| \int_{\mathbb{R}^d} \mathcal{F}_{n-1}(x,y) \,\mathbbm{h}^{(l)}_{s}(y-q) \,\textup{d}y \Big| & \nonumber \\
\times \,\mathbbm{g}_{\beta s}(q) \,\textup{d}q \,\textup{d}x \,\frac{\textup{d}s}{s} \,\frac{\textup{d}t}{t} \,\frac{\textup{d}u}{u} \,\frac{\textup{d}\beta}{\beta^4} & . \label{eq:adisplay}
\end{align}
Recall that
\[ \Lambda_{n-1,l,a,b}^{\alpha,\gamma} = \int_{a}^{b} \int_{(\mathbb{R}^d)^{2}} \Big| \int_{\mathbb{R}^d}\mathcal{F}_{n-1}(x,y) \, \mathbbm{h}^{(l)}_{s\gamma}(y-q) \,\textup{d}y \Big| \,\mathbbm{g}_{s\alpha}(q)
\,\textup{d}q \,\textup{d}x \,\frac{\textup{d}s}{s} , \]
by the definition before the statement of Lemma~\ref{lemma:telescoping} specialized to $k=n-1$, and that, by Estimate \eqref{eq:lemmateleineq2}, we have
\begin{equation}\label{eq:adisplay2}
\Lambda_{n-1,l,a,b}^{\alpha,\gamma} \lesssim_{n,d} (\alpha\gamma)^2 \Big(\log\frac{b}{a}\Big)^{1-2^{-n+2}}
\end{equation}
for $\alpha,\gamma\geq 2^{-1/2}$.
We can recognize \eqref{eq:adisplay} as
\[ \varepsilon^{-d-3} \sum_{l=1}^d \int_1^\infty \int_0^\infty \min\{u,u^{-1}\} \int_{\varepsilon}^{1}
\Lambda_{n-1,l,2^{-J-5}tu,2^{-5}tu}^{\beta,1}
\,\frac{\textup{d}t}{t} \,\frac{\textup{d}u}{u} \,\frac{\textup{d}\beta}{\beta^4}, \]
so \eqref{eq:adisplay2} bounds it by a constant times
\begin{align*}
& \varepsilon^{-d-3} \sum_{l=1}^d \int_1^\infty \int_0^\infty \min\{u,u^{-1}\} \int_{\varepsilon}^{1}
\beta^2 J^{1-2^{-n+2}}
\,\frac{\textup{d}t}{t} \,\frac{\textup{d}u}{u} \,\frac{\textup{d}\beta}{\beta^4} \\
& = d \,\varepsilon^{-d-3} \Big(\int_1^\infty \,\frac{\textup{d}\beta}{\beta^2}\Big)
\Big(\int_0^\infty \min\{1,u^{-2}\} \,\textup{d}u\Big) \Big(\int_{\varepsilon}^{1} \,\frac{\textup{d}t}{t}\Big) J^{1-2^{-n+2}} \\
& \lesssim_d \varepsilon^{-d-4} J^{1-2^{-n+2}}.
\end{align*}
This completes the proof. Note that all implicit constants throughout the proof can be pushed to the exponent of $\varepsilon$, yielding the new power $\varepsilon^{-F}$ for some constant $F$.
\end{proof}

\section{The uniform part: proof of Proposition~\ref{prop:uniform}}
\label{sec:uniformpart}
This section follows the corresponding proof scheme by Cook, Magyar, and Pramanik \cite{CMP15:roth}.
We prefer to work out all details, even those that are easy modifications of the results from \cite{CMP15:roth}.
We justify that by the fact that we have a different definition of ``smoothened'' progression counting quantity \eqref{eq:nepsdefinition} and by desire to keep the paper self-contained.

The \emph{uniformity norms}, also known simply as the \emph{$\textup{U}^n$-norms}, were introduced by Gowers \cite{Gow98:4,Gow01:n}. Even though they were initially used for functions defined on the set of integers, their generalizations to locally compact abelian groups also proved to be interesting and useful; see for instance the paper \cite{ET12:gowers} by Eisner and Tao. We are working on the Euclidean space $\mathbb{R}^d$, so we define
\begin{align*}
\|f\|_{\textup{U}^n(\mathbb{R}^d)} & := \Big( \int_{(\mathbb{R}^d)^{n+1}} (\Delta_{h_n} \cdots \Delta_{h_2} \Delta_{h_1} f) (x) \,\textup{d}x \,\textup{d}h_1 \textup{d}h_2 \cdots \textup{d}h_n \Big)^{2^{-n}} \\
& = \Big( \int_{(\mathbb{R}^d)^{n-1}} \Big| \int_{\mathbb{R}^d} (\Delta_{h_{n-1}} \cdots \Delta_{h_1} f) (x) \,\textup{d}x \Big|^2 \,\textup{d}h_1 \cdots \textup{d}h_{n-1} \Big)^{2^{-n}}
\end{align*}
for any measurable function $f\colon\mathbb{R}^d\to\mathbb{C}$.
Here $\Delta_{h}f$ is a function given by
\[ (\Delta_{h}f)(x) := f(x) \overline{f(x+h)} \]
for $h,x\in\mathbb{R}^d$.
Iterated operator $\Delta$ will sometimes be written using the product sign, i.e.,
\[ \Big[ \prod_{j=1}^{n} \Delta_{h_j} \Big] f : = \Delta_{h_1} \Delta_{h_2} \cdots \Delta_{h_n} f. \]
Note that the order of transformations in the above composition is not important since they commute.

Besides comparison of the $\textup{U}^n$-norm to an appropriate $\textup{L}^p$-norm, namely
\begin{equation}\label{eq:gowersLp}
\|f\|_{\textup{U}^n(\mathbb{R}^d)} \leq \|f\|_{\textup{L}^{2^n/(n+1)}(\mathbb{R}^d)},
\end{equation}
we also have its scaling property,
\begin{equation}\label{eq:gowersscaling}
\|f_{\lambda}\|_{\textup{U}^n(\mathbb{R}^d)} = \lambda^{-d(1-(n+1)2^{-n})} \|f\|_{\textup{U}^n(\mathbb{R}^d)}
\end{equation}
for any $\lambda\in(0,\infty)$. These are easy consequences of Young's inequality and a change of variables; see \cite{ET12:gowers} for details. It is also well-known that $\|\cdot\|_{\textup{U}^n(\mathbb{R}^d)}$ satisfy the triangle inequality; see \cite{Gow01:n}.

The following lemma is a generalization of \cite[Lemma~4.2]{CMP15:roth} to longer progressions.

\begin{lemma}\label{lemma:CMPinduction}
If $A\subseteq[0,1]^d$ is a measurable set and $g\colon\mathbb{R}^d\to\mathbb{R}$ is a measurable function supported in $[-5\lambda,5\lambda]^d$, then
\[ \bigg|  \int_{(\mathbb{R}^d)^{2}} \Big( \prod_{i=0}^{n-1} \mathbbm{1}_{A}(x+iy)\Big ) g(y) \,\textup{d}y \,\textup{d}x \bigg| \lesssim_{n,d} \lambda^{d(1-(n+1)2^{-n})} \|g\|_{\textup{U}^n(\mathbb{R}^d)}. \]
\end{lemma}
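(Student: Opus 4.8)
The plan is to prove a slightly more general statement by induction on the number of functions, which is the standard "generalized von Neumann'' (Cauchy--Schwarz--Gowers) scheme, adapted so that the weight $g$ sits on the difference variable. For $m\geq 1$, measurable $F_0,\ldots,F_{m-1}\colon\mathbb{R}^d\to[0,1]$ supported in $[0,1]^d$, and measurable $g\colon\mathbb{R}^d\to\mathbb{R}$ supported in $[-5\lambda,5\lambda]^d$, set
\[ \Lambda_m(F_0,\ldots,F_{m-1};g) := \int_{(\mathbb{R}^d)^2} \Big(\prod_{i=0}^{m-1} F_i(x+iy)\Big) g(y) \,\textup{d}x\,\textup{d}y . \]
I will show $|\Lambda_m|\lesssim_{m,d} \lambda^{d(1-(m+1)2^{-m})}\|g\|_{\textup{U}^m(\mathbb{R}^d)}$; specializing to $m=n$ and $F_i=\mathbbm{1}_A$ gives the lemma. (The generalization to arbitrary bounded $F_i$ is what makes the induction close, since the functions produced along the way are products of translated indicators, not indicators of a single set; and the case $\|g\|_{\textup{U}^m}=\infty$ is trivial.) The base case $m=1$ is immediate: $\Lambda_1=(\int F_0)(\int g)$, one has $|\int F_0|\leq|[0,1]^d|=1$, and $\|g\|_{\textup{U}^1(\mathbb{R}^d)}=|\int g|$ while the exponent $d(1-(1+1)2^{-1})$ equals $0$.

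For the inductive step, fix $m\geq 2$ and assume the claim for $m-1$. Writing $\Lambda_m=\int_x F_0(x)\big(\int_y\prod_{i=1}^{m-1}F_i(x+iy)g(y)\,\textup{d}y\big)\,\textup{d}x$ and applying the Cauchy--Schwarz inequality in $x$ together with $\int_x F_0(x)^2\,\textup{d}x\leq 1$ gives
\[ |\Lambda_m|^2 \leq \int_{\mathbb{R}^d}\Big|\int_{\mathbb{R}^d}\prod_{i=1}^{m-1}F_i(x+iy)\,g(y)\,\textup{d}y\Big|^2\,\textup{d}x . \]
Expanding the square by a second copy $y'$ of $y$, substituting $h=y'-y$ (so that $g(y)g(y+h)=(\Delta_h g)(y)$ and $F_i(x+iy)F_i(x+iy+ih)=(\Delta_{ih}F_i)(x+iy)$, with $\Delta_h$ as defined before the lemma), and then shifting $x\mapsto x-y$ and reindexing $j=i-1$, the right-hand side becomes $\int_{\mathbb{R}^d}\Lambda_{m-1}\big((\Delta_{(j+1)h}F_{j+1})_{j=0}^{m-2};\,\Delta_h g\big)\,\textup{d}h$. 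This $h$-integral is nonnegative (it is the displayed $\textup{L}^2(\textup{d}x)$ quantity), each $\Delta_{(j+1)h}F_{j+1}$ again takes values in $[0,1]$ and is supported in $[0,1]^d$, and $\Delta_h g$ is real and supported in $[-5\lambda,5\lambda]^d$. Bounding the integrand by its absolute value and applying the induction hypothesis for each fixed $h$ yields
\[ |\Lambda_m|^2 \lesssim_{m,d} \lambda^{d(1-m2^{-(m-1)})}\int_{\mathbb{R}^d}\|\Delta_h g\|_{\textup{U}^{m-1}(\mathbb{R}^d)}\,\textup{d}h . \]

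Finally, $\Delta_h g$ vanishes unless $h\in[-10\lambda,10\lambda]^d$, so by H\"older's inequality on that cube, with exponents $2^{m-1}/(2^{m-1}-1)$ and $2^{m-1}$,
\[ \int_{[-10\lambda,10\lambda]^d}\|\Delta_h g\|_{\textup{U}^{m-1}}\,\textup{d}h \leq (20\lambda)^{d(1-2^{-(m-1)})}\Big(\int_{\mathbb{R}^d}\|\Delta_h g\|_{\textup{U}^{m-1}}^{2^{m-1}}\,\textup{d}h\Big)^{2^{-(m-1)}} = (20\lambda)^{d(1-2^{-(m-1)})}\|g\|_{\textup{U}^m}^{2}, \]
where the last equality is the recursive identity $\|g\|_{\textup{U}^m(\mathbb{R}^d)}^{2^m}=\int_{\mathbb{R}^d}\|\Delta_h g\|_{\textup{U}^{m-1}(\mathbb{R}^d)}^{2^{m-1}}\,\textup{d}h$, which follows directly from the definition and the commutativity of the operators $\Delta_h$. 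Combining the last two displays and taking square roots gives $|\Lambda_m|\lesssim_{m,d}\lambda^{\frac{d}{2}(2-(m+1)2^{-(m-1)})}\|g\|_{\textup{U}^m}=\lambda^{d(1-(m+1)2^{-m})}\|g\|_{\textup{U}^m}$, which closes the induction. The argument is essentially routine; the only genuine choices are the generalization to arbitrary bounded $F_i$ and the use of H\"older (rather than Cauchy--Schwarz) on the $O(\lambda)$-cube carrying $h$, and the precise power of $\lambda$ is exactly the one dictated by the scaling identity \eqref{eq:gowersscaling} for $g$. I expect the only part requiring care to be the bookkeeping of which function lands in which slot $x+jy$ after the shift $x\mapsto x-y$ and the reindexing, together with checking at each stage that the factors remain supported in $[0,1]^d$ (so the $x$-integrals stay bounded) and that $h$ stays confined to an $O(\lambda)$-cube (so each H\"older step contributes the correct power of $\lambda$).
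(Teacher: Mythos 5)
Your proof is correct: the induction closes as stated, since each $\Delta_{(j+1)h}F_{j+1}$ is again $[0,1]$-valued and supported in $[0,1]^d$, $\Delta_h g$ is real and supported in $[-5\lambda,5\lambda]^d$ (so the same $\lambda$ feeds into the induction hypothesis, and $h$ is confined to $[-10\lambda,10\lambda]^d$), the recursive identity $\|g\|_{\textup{U}^m(\mathbb{R}^d)}^{2^m}=\int_{\mathbb{R}^d}\|\Delta_h g\|_{\textup{U}^{m-1}(\mathbb{R}^d)}^{2^{m-1}}\,\textup{d}h$ is immediate from the definition, and the exponent bookkeeping $\frac{d}{2}\big((1-m2^{-(m-1)})+(1-2^{-(m-1)})\big)=d(1-(m+1)2^{-m})$ checks out. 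The underlying mechanism is the same iterated Cauchy--Schwarz cascade as in the paper, but the organization is genuinely different: the paper fixes $\mathbbm{1}_A$ and $g$, inducts on an index $k$ through quantities $\Lambda_k$ that already carry all difference parameters $h_1,\ldots,h_{n-k}$, applies Cauchy--Schwarz jointly in $(x,h_1,\ldots,h_{n-k})$, extracts the powers of $\lambda$ from crude support-volume bounds on the factor $\mathcal{L}_k$, and only meets the Gowers norm at the base case $k=1$, always as the full $\|g\|_{\textup{U}^n}$ raised to the power $2^{n-k}$; you instead prove the more general statement for arbitrary $[0,1]$-valued functions $F_i$, induct on the number of functions, lower the Gowers index by one per step (replacing $g$ by $\Delta_h g$), and reassemble $\|g\|_{\textup{U}^n}$ at the end of each step via the recursive identity together with H\"older on the $O(\lambda)$-cube in $h$, rather than plain Cauchy--Schwarz. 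Your version is the more modular ``generalized von Neumann'' formulation and would apply verbatim with different bounded functions in the various slots; the paper's version avoids introducing the intermediate $\textup{U}^{m}$-norms of the modified weights at the cost of heavier notation. Both arguments share the same (harmless, given the intended application to smooth compactly supported $g$) implicit assumption that the integrals involved converge absolutely so that Fubini and Cauchy--Schwarz may be applied freely.
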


\begin{proof}
The proof is performed by induction. To formulate the inductive statement,
for any $k\in\{1,2,\ldots,n\}$ we define auxiliary expressions
\begin{align*}
\Lambda_k := \int_{(\mathbb{R}^d)^{n-k+2}}
& \bigg( \prod_{i=0}^{k-1}\Big( \Big[ \prod_{m=1}^{n-k}\Delta_{(i+m)h_m} \Big]\mathbbm{1}_{A}(x+iy)\Big) \bigg) \\
& \times \bigg( \Big[ \prod_{m=1}^{n-k}\Delta_{h_m} \Big]g(y) \bigg) \,\textup{d}y \,\textup{d}x \,\textup{d}h_1\cdots \textup{d}h_{n-k} .
\end{align*}
Here we remark that we interpret the product $\prod_{m=1}^{0}$ as the identity operator.
We claim that for any $k\in\{1,2,\ldots,n\}$ we have the estimate
\begin{align}\label{eq:inductive}
|\Lambda_k| \lesssim_{n,d} \lambda^{d\sum_{j=1}^{k}(n-j)2^{-k+j-1}} \|g\|^{2^{n-k}}_{\textup{U}^n(\mathbb{R}^d)}.
\end{align}
Note that the lemma will follow simply by specifying $k=n$ in \eqref{eq:inductive} and using a simple identity
\[ \sum_{j=1}^{n} \frac{n-j}{2^{n-j+1}} = 1 - \frac{n+1}{2^n}. \]

In order to prove the claim \eqref{eq:inductive} we induct on $k$.
For any $k$ we can write
\begin{equation}\label{eq:lemmaCMPsplitting}
\Lambda_k = \int_{(\mathbb{R}^d)^{n-k+1}} L_k \, M_k \,\textup{d}x \,\textup{d}h_1\cdots \textup{d}h_{n-k},
\end{equation}
where $L_k = L_k(x,h_1,\ldots,h_{n-k})$ and $M_k = M_k(x,h_1,\ldots,h_{n-k})$ are expressions given by
\[ L_k := \Big[ \prod_{m=1}^{n-k}\Delta_{mh_m} \Big]\mathbbm{1}_{A}(x) \]
and
\[ M_k := \int_{\mathbb{R}^{d}}  \bigg( \prod_{i=1}^{k-1}\Big( \Big[ \prod_{m=1}^{n-k}\Delta_{(i+m)h_m} \Big]\mathbbm{1}_{A}(x+iy)\Big) \bigg) \bigg( \Big[ \prod_{m=1}^{n-k}\Delta_{h_m} \Big]g(y) \bigg) \,\textup{d}y . \]
Also note that, by the support information of $g$, the integration in $y,h_1,\ldots,h_{n-k}$ in the definition of $\Lambda_k$ can be performed just over balls $\|y\|_{\ell^2}\lesssim_{n,d}\lambda$, $|h_1|\lesssim_{n,d}\lambda$,\ldots, $|h_{n-k}|\lesssim_{n,d}\lambda$.

To handle the induction basis, $k=1$, we note that $M_1$ does not depend on $x$, apply the Cauchy--Schwarz inequality in $h_1,\ldots, h_{n-1}$ to \eqref{eq:lemmaCMPsplitting}, and estimate
\begin{align*}
|\Lambda_1|
& \leq \bigg( \int_{(\mathbb{R}^d)^{n-1}} \Big( \int_{\mathbb{R}^d} L_1 \,\textup{d}x  \Big)^2 \,\textup{d}h_1\cdots \textup{d}h_{n-1} \bigg)^{1/2}
\bigg(  \int_{(\mathbb{R}^d)^{n-1}}  M_1^2 \,\textup{d}h_1\cdots \textup{d}h_{n-1} \bigg)^{1/2} \\
& \lesssim_{n,d} \lambda^{d(n-1)2^{-1}} \|g\|^{2^{n-1}}_{\textup{U}^n(\mathbb{R}^d)}.
\end{align*}
The first factor was controlled using the aforementioned support consideration, while in the second factor we only needed to recall definition of the $\textup{U}^n$-norm.
This is the desired Estimate \eqref{eq:inductive} in the case $k=1$.

Now take $k\in\{2,\ldots,n\}$ and assume that \eqref{eq:inductive} holds for $k-1$ in place of $k$.
Applying the Cauchy--Schwarz in $x,h_1,\ldots, h_{n-k}$ to \eqref{eq:lemmaCMPsplitting} yields
\begin{equation}\label{eq:auxlm}
|\Lambda_k| \leq \mathcal{L}_k^{1/2} \mathcal{M}_k^{1/2},
\end{equation}
where
\begin{align*}
\mathcal{L}_k & := \int_{(\mathbb{R}^d)^{n-k+1}} L_k^2 \,\textup{d}x \,\textup{d}h_1\cdots \textup{d}h_{n-k}, \\
\mathcal{M}_k & := \int_{(\mathbb{R}^d)^{n-k+1}} M_k^2 \,\textup{d}x \,\textup{d}h_1\cdots \textup{d}h_{n-k}.
\end{align*}
Using the same support considerations as before we bound
\begin{equation}\label{eq:auxlk}
\mathcal{L}_k\lesssim_{n,d} \lambda^{d(n-k)}.
\end{equation}
To estimate $\mathcal{M}_k$ we expand the square $M_k^2$, which gives
\begin{align*}
\mathcal{M}_k = \int_{(\mathbb{R}^d)^{n-k+3}}
& \bigg( \prod_{i=1}^{k-1}\Big( \Big[ \prod_{m=1}^{n-k}\Delta_{(i+m)h_m} \Big]\mathbbm{1}_{A}(x+iy)\Big) \bigg) \Big[ \prod_{m=1}^{n-k}\Delta_{h_m} \Big]g(y) \\
& \times \bigg( \prod_{i=1}^{k-1}\Big( \Big [ \prod_{m=1}^{n-k}\Delta_{(i+m)h_m} \Big]\mathbbm{1}_{A}(x+i(y+z)) \Big) \bigg) \Big[ \prod_{m=1}^{n-k}\Delta_{h_m} \Big]g(y+z) \\
& \,\textup{d}x \,\textup{d}y \,\textup{d}z \,\textup{d}h_1\cdots \textup{d}h_{n-k}.
\end{align*}
Changing variables $x\rightarrow x-y$ and shifting $i\rightarrow i+1$ we obtain
\begin{align*}
\int_{(\mathbb{R}^d)^{n-k+3}}
& \bigg( \prod_{i=0}^{k-2}\Big( \Big[ \prod_{m=1}^{n-k}\Delta_{(i+1+m)h_m} \Big]\mathbbm{1}_{A}(x+iy)\Big) \bigg) \Big[ \prod_{m=1}^{n-k}\Delta_{h_m} \Big]g(y) \\
& \times \bigg( \prod_{i=0}^{k-2}\Big( \Big[ \prod_{m=1}^{n-k}\Delta_{(i+1+m)h_m} \Big]\mathbbm{1}_{A}(x+iy+(i+1)z)\Big) \bigg) \Big[ \prod_{m=1}^{n-k}\Delta_{h_m} \Big]g(y+z) \\
& \,\textup{d}x \,\textup{d}y \,\textup{d}z \,\textup{d}h_1\cdots \textup{d}h_{n-k},
\end{align*}
which  further equals, after shifting $m\rightarrow m-1$,
\begin{align*}
\int_{(\mathbb{R}^d)^{n-k+3}}
& \bigg( \prod_{i=0}^{k-2}\Big( \Big[ \prod_{m=2}^{n-k+1}\Delta_{(i+m)h_{m-1}} \Big] \Delta_{(i+1)z} \mathbbm{1}_{A}(x+iy)\Big) \bigg) \\
& \times \Big[ \prod_{m=2}^{n-k+1}\Delta_{h_{m-1}} \Big]\Delta_{z} g(y) \,\textup{d}x \,\textup{d}y \,\textup{d}z \,\textup{d}h_1\cdots \textup{d}h_{n-k}.
\end{align*}
Relabelling
\[ (h_1,h_2,\ldots,h_{n-k}, z)\rightarrow (h_2,h_3,\ldots, h_{n-k+1},h_1) \]
we arrive precisely at quantity $\Lambda_{k-1}$.
By the induction hypothesis we obtain
\begin{equation}\label{eq:auxmk}
\mathcal{M}_k = \Lambda_{k-1} \lesssim_{n,d} \lambda^{d\sum_{j=1}^{k-1}(n-j)2^{-k+j}} \|g\|_{\textup{U}^n(\mathbb{R}^d)}^{2^{n-k+1}}.
\end{equation}
Combining Estimates \eqref{eq:auxlm}, \eqref{eq:auxlk}, \eqref{eq:auxmk} and simplifying we obtain exactly \eqref{eq:inductive}.
This completes the induction and hence it also finishes the proof of Lemma~\ref{lemma:CMPinduction}.
\end{proof}

Lemmata~\ref{lemma:Unnorm1D} and \ref{lemma:Unnormsigma} below could be viewed as modifications and generalizations of \cite[Lemma~4.3]{CMP15:roth} and \cite[Lemma~2.4]{CMP15:roth}, respectively. They handle higher $\textup{U}^n$-norms, since the paper by Cook, Magyar, and Pramanik \cite{CMP15:roth} only needed to deal with the case $n=3$.

\begin{lemma}\label{lemma:Unnorm1D}
Suppose that $p\in[1,\infty)\setminus\{1,2,\ldots,n-1\}$ and recall $D=D(n,p)$ from \eqref{eq:dimthreshhold}. For any $u\in\mathbb{R}$ we have
\begin{equation}\label{eq:Unnormest}
\big\|\mathbbm{1}_{[-3,3]}(x)e^{2\pi\mathbbm{i} u |x|^p}\big\|_{\textup{U}^n_x(\mathbb{R})} \lesssim_{n,p} (1+|u|)^{-2/D}.
\end{equation}
\end{lemma}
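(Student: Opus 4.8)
The plan is to unfold the $\textup{U}^n$-norm into an oscillatory integral over a bounded polytope and to extract decay in $u$ from a single mixed derivative of the resulting phase, which is forced to be nonvanishing precisely because $p\notin\{0,1,\ldots,n-1\}$; the ensuing multilinear oscillatory integral is then dispatched by a multidimensional van der Corput estimate.

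First I would dispose of the easy range $|u|\le 1$: there the left-hand side of \eqref{eq:Unnormest} is $\leq\|\mathbbm{1}_{[-3,3]}\|_{\textup{L}^{2^n/(n+1)}(\mathbb{R})}\lesssim_n 1\lesssim(1+|u|)^{-2/D}$ by \eqref{eq:gowersLp}, so from now on $|u|\geq 1$. Write $f(y):=\mathbbm{1}_{[-3,3]}(y)e^{2\pi\mathbbm{i}u|y|^p}$, $g(t):=|t|^p$, and $\omega\cdot h:=\omega_1 h_1+\cdots+\omega_n h_n$ for $\omega\in\{0,1\}^n$ and $h=(h_1,\ldots,h_n)$. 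Unwinding the definition of $\|\cdot\|_{\textup{U}^n(\mathbb{R})}$ (the factor indexed by $\omega$ being complex-conjugated $|\omega|$ times, hence its phase multiplied by $(-1)^{|\omega|}$),
\[ \|f\|_{\textup{U}^n(\mathbb{R})}^{2^n} = \int_{\Omega} e^{2\pi\mathbbm{i}u\Phi(x,h)} \,\textup{d}x\,\textup{d}h, \qquad \Phi(x,h):=\sum_{\omega\in\{0,1\}^n}(-1)^{|\omega|} g(x+\omega\cdot h), \]
where $\Omega:=\{(x,h)\in\mathbb{R}^{n+1}: x+\omega\cdot h\in[-3,3]\text{ for all }\omega\}$ is a convex polytope contained in $[-3,3]\times[-6,6]^n$. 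The decisive observation is that the operator $\partial_{h_1}\cdots\partial_{h_n}$ annihilates every summand of $\Phi$ except the one indexed by $\omega=(1,\ldots,1)$, whence
\[ \partial_{h_1}\cdots\partial_{h_n}\Phi(x,h) = (-1)^n g^{(n)}(s), \qquad s:=x+h_1+\cdots+h_n, \]
and, more generally, $\partial_{h_1}^{k_1}\cdots\partial_{h_n}^{k_n}\Phi=(-1)^n g^{(k_1+\cdots+k_n)}(s)$ whenever all $k_i\geq 1$. Since $p\notin\{0,1,\ldots,n-1\}$, the constant $c_{n,p}:=p(p-1)\cdots(p-n+1)$ is nonzero and $g^{(n)}(t)=c_{n,p}|t|^{p-n}\operatorname{sgn}(t)^n$ for $t\neq 0$, so in particular $|\partial_{h_1}\cdots\partial_{h_n}\Phi|=|c_{n,p}|\,|s|^{p-n}$ on $\{s\neq 0\}$.

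Next I would localize away from the singularity of $g$ at the origin. Fix a parameter $\eta\in(0,1]$ and delete the set $\Omega^{\mathrm{bad}}_\eta:=\{(x,h)\in\Omega: |x+\omega\cdot h|\leq\eta \text{ for some } \omega\in\{0,1\}^n\}$; this is a union of at most $2^n$ slabs of width $\lesssim\eta$, so $|\Omega^{\mathrm{bad}}_\eta|\lesssim_n\eta$ and its contribution to the integral is $\lesssim_n\eta$. On the complement every affine form $x+\omega\cdot h$ has modulus at least $\eta$, so $\Phi$ is $\textup{C}^\infty$ there, and cutting along the $2^n$ hyperplanes $\{x+\omega\cdot h=0\}$ splits $\Omega\setminus\Omega^{\mathrm{bad}}_\eta$ into finitely many convex pieces, in number depending only on $n$, on each of which every $x+\omega\cdot h$ has a fixed sign; in particular $s$ is sign-definite there and $\eta\leq|s|\leq R_n:=3+6n$, so
\[ |\partial_{h_1}\cdots\partial_{h_n}\Phi| = |c_{n,p}|\,|s|^{p-n} \gtrsim_{n,p} \eta^{\max(p-n,0)}, \]
while every $g^{(k_1+\cdots+k_n)}(s)$ appearing above is of a fixed sign (or vanishes identically), which supplies the monotonicity needed in the next step. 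Treating $x\in[-3,3]$ as a spectator and viewing each piece, for fixed $x$, as a convex subset of $[-6,6]^n$ in the variables $h_1,\ldots,h_n$, I would apply the multidimensional van der Corput estimate: if a $\textup{C}^n$ phase has $n$-fold mixed derivative bounded below by $\rho$ in modulus on a convex set of diameter $\lesssim_n 1$ (with the mixed partials monotone, as just arranged), then its oscillatory integral with frequency $u$ is $\lesssim_n(\rho|u|)^{-1/n}$. Summing over the pieces and integrating in $x$ gives $\big|\int_{\Omega\setminus\Omega^{\mathrm{bad}}_\eta}e^{2\pi\mathbbm{i}u\Phi}\big|\lesssim_{n,p}(\eta^{\max(p-n,0)}|u|)^{-1/n}$, hence
\[ \|f\|_{\textup{U}^n(\mathbb{R})}^{2^n} \lesssim_{n,p} \eta + \big(\eta^{\max(p-n,0)}|u|\big)^{-1/n}. \]

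Finally I would optimize in $\eta$: since $n+\max(p-n,0)=\max(n,p)$, the choice $\eta:=|u|^{-1/\max(n,p)}$ balances the two terms and yields $\|f\|_{\textup{U}^n(\mathbb{R})}^{2^n}\lesssim_{n,p}|u|^{-1/\max(n,p)}\leq|u|^{-1/(n+p)}$. As $|u|\geq 1$ and $2^{n+1}/D=1/(4(n+p))\leq 1/(n+p)$, this is $\lesssim|u|^{-2^{n+1}/D}\lesssim(1+|u|)^{-2^{n+1}/D}$, and taking $2^n$-th roots gives $\|f\|_{\textup{U}^n(\mathbb{R})}\lesssim_{n,p}(1+|u|)^{-2/D}$, as required. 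The one genuinely technical ingredient is the multidimensional van der Corput estimate with exponent $1/n$: a naive repeated Cauchy--Schwarz only produces an exponent of size roughly $2^{-n}$, which — precisely because of the factor $2^n$ in $D(n,p)$ — is too weak for large $n$, so one must use the sharper, sublevel-set based multilinear van der Corput lemma in the spirit of Carbery--Christ--Wright; this is applicable here since $\partial_{h_1}\cdots\partial_{h_n}\Phi$ is a nonvanishing function of the single variable $s$ whose further mixed $h$-derivatives are all sign-definite on each piece. A secondary, purely combinatorial nuisance is the cell decomposition around $\{t=0\}$ forced by the non-smoothness of $|t|^p$, which is taken care of by the $\eta$-excision above.
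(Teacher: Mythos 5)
Your reduction to $|u|\ge 1$, the unfolding of $\|\cdot\|_{\textup{U}^n}^{2^n}$ into $\int_\Omega e^{2\pi\mathbbm{i}u\Phi}$, the $\eta$-excision of the slabs $\{|x+\omega\cdot h|\le\eta\}$, the computation $\partial_{h_1}\cdots\partial_{h_n}\Phi=(-1)^n g^{(n)}(x+h_1+\cdots+h_n)$, and the exponent bookkeeping against $D(n,p)=2^{n+3}(n+p)$ are all fine. The genuine gap is the oscillation step: the ``multidimensional van der Corput estimate'' you invoke --- decay $\lesssim_n(\rho|u|)^{-1/n}$ for a $C^n$ phase on a bounded convex subset of $\mathbb{R}^n$ assuming only $|\partial_{h_1}\cdots\partial_{h_n}\Phi|\ge\rho$ together with sign-definiteness of the higher mixed derivatives --- is not a quotable black box. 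Carbery--Christ--Wright prove the corresponding \emph{sublevel set} bound in all dimensions, but sublevel bounds do not imply oscillatory decay in dimension $\ge 2$; their oscillatory integral estimates for mixed multi-indices are essentially two-dimensional (and there with a logarithmic loss), with higher-dimensional versions only under substantially stronger structural hypotheses (e.g.\ polynomial phases with degree-dependent constants), and the general statement for $\beta=(1,\ldots,1)$, $n\ge 3$, is open. The monotonicity you point to (sign-definiteness of $\partial^\alpha\Phi$ for $\alpha\ge(1,\ldots,1)$ on each piece) is not among the hypotheses of any standard version, so as written this step would have to be proved, presumably by exploiting the very special structure that the top mixed derivative is a monotone function of the single linear form $s=x+h_1+\cdots+h_n$; your proposal does not do this. (You are right, however, that a repeated Cauchy--Schwarz fallback, giving only an exponent of order $2^{-n}$, is too weak here, so the gap cannot be papered over trivially.)

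For comparison, the paper avoids any multidimensional oscillatory input: it expands only $n-1$ difference variables, writing $\|g\|_{\textup{U}^n}^{2^n}=\int|\mathcal{I}_{h_1,\ldots,h_{n-1}}(u)|^2\,\textup{d}h$ with $\mathcal{I}$ a \emph{one-dimensional} oscillatory integral in $x$. After splitting off $|x|<\eta$ and the case $\min_i|h_i|\le\eta$, the fundamental theorem of calculus collapses the alternating sum to the product formula $\phi'(x)=\pm p(p-1)\cdots(p-n+1)\,h_1\cdots h_{n-1}\int_{[0,1]^{n-1}}(x+t\cdot h)^{p-n}\,\textup{d}t$, giving $|\phi'|\gtrsim_{n,p}\eta^{n-1}\min\{\eta^{p-n},1\}$ and a matching bound on $\phi''$; a single integration by parts then yields $|\mathcal{I}|\lesssim_{n,p}\eta^{-4(n+p)+1}|u|^{-1}$, and optimizing $\eta=|u|^{-1/4(n+p)}$ gives exactly $(1+|u|)^{-2/D}$, the generous factor $2^{n+3}$ in $D$ absorbing the $2^n$-th root. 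So the workable route replaces your multidimensional lemma by first-order one-dimensional van der Corput applied pointwise in the $h$-variables; if you wish to keep your global $(n+1)$-dimensional setup, you must supply a proof of your van der Corput claim, which is precisely the missing (and nontrivial) ingredient.
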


\begin{proof}
For $|u|\leq1$ there is nothing to prove: we can even disregard any cancellation coming from the complex exponential by using \eqref{eq:gowersLp}.
Therefore, we assume that $|u|>1$. Fix a number $0<\eta<1$; its value will be chosen later.
Using the fact that the $\textup{U}^n$-norm satisfies the triangle inequality, we can bound the left hand side of \eqref{eq:Unnormest} by a sum of three terms:
\begin{equation}\label{eq:Unterm1}
\big\|\mathbbm{1}_{(-\eta,\eta)}(x)e^{2\pi\mathbbm{i} u |x|^p}\big\|_{\textup{U}^n_x(\mathbb{R})},
\end{equation}
\begin{equation}\label{eq:Unterm2}
\big\|\mathbbm{1}_{[\eta,3]}(x)e^{2\pi\mathbbm{i} u |x|^p}\big\|_{\textup{U}^n_x(\mathbb{R})},
\end{equation}
and
\begin{equation}\label{eq:Unterm3}
\big\|\mathbbm{1}_{[-3,-\eta]}(x)e^{2\pi\mathbbm{i} u |x|^p}\big\|_{\textup{U}^n_x(\mathbb{R})}.
\end{equation}
Term \eqref{eq:Unterm1} is again controlled by inequality \eqref{eq:gowersLp}; it is at most
\[ \|\mathbbm{1}_{(-\eta,\eta)}\|_{\textup{L}^{2^n/(n+1)}(\mathbb{R})} \lesssim \eta^{(n+1)2^{-n}}. \]
In the rest of the proof we will be bounding term \eqref{eq:Unterm2}, while term \eqref{eq:Unterm3} will then clearly satisfy the same estimate.

Using the definition of the $\textup{U}^n$-norm we can expand out the $2^n$-th power of \eqref{eq:Unterm2} as an expression of the form
\[ \int_{\mathbb{R}^{n-1}} \big| \mathcal{I}_{h_1,\ldots,h_{n-1}}(u) \big|^2 \,\textup{d}h_1 \cdots \textup{d}h_{n-1}, \]
where $\mathcal{I}_{h_1,\ldots,h_{n-1}}$ is defined as
\begin{equation}\label{eq:oscilintegral}
\mathcal{I}_{h_1,\ldots,h_{n-1}}(u) := \int_{[a,b]} e^{2\pi\mathbbm{i} u \phi(x)} \,\textup{d}x .
\end{equation}
Here, the limits of integration are given by
\[ a := \max_{(r_1,\ldots,r_{n-1})\in\{0,1\}^{n-1}} (\eta-r_1 h_1-\cdots-r_{n-1} h_{n-1}) \]
and
\[ b := \min_{(r_1,\ldots,r_{n-1})\in\{0,1\}^{n-1}} (3-r_1 h_1-\cdots-r_{n-1} h_{n-1}), \]
while the phase function is given as
\begin{equation}\label{eq:phasefn}
\phi(x) := \sum_{(r_1,\ldots,r_{n-1})\in\{0,1\}^{n-1}} (-1)^{r_1+\cdots+r_{n-1}} |x+r_1 h_1+\cdots+r_{n-1} h_{n-1}|^p.
\end{equation}
We will treat \eqref{eq:oscilintegral} as an oscillatory integral, see \cite[Chapter~VIII]{St93:book}, but we will not need any advanced results from the literature.
Note that $a\geq\eta$ and $b\leq 3$. We interpret \eqref{eq:oscilintegral} as zero if $a\geq b$. In order for \eqref{eq:oscilintegral} to be nonzero there must also be an $x\in[\eta,3]$ such that $x+h_i\in[\eta,3]$ for $i=1,2,\ldots,n-1$, which implies
\begin{equation}\label{eq:suppconditandh}
|h_i| \leq 3
\end{equation}
for each index $i$. We proceed by estimating \eqref{eq:oscilintegral} for each individual choice of $h_1,\ldots,h_{n-1}$ satisfying \eqref{eq:suppconditandh}.
We distinguish two cases.

\emph{Case 1.} $|h_i|\leq\eta$ for at least one $i\in\{1,\ldots,n-1\}$.
In this case we disregard any oscillation and simply bound
\begin{equation}\label{eq:estforICase1}
| \mathcal{I}_{h_1,\ldots,h_{n-1}}(u) | \leq |[a,b]| \leq 3.
\end{equation}

\emph{Case 2.} $|h_i|>\eta$ for each $i\in\{1,\ldots,n-1\}$.
For $x>a-\eta$ all sums inside the absolute values in Formula \eqref{eq:phasefn} are strictly positive, so multiple application of the fundamental theorem of calculus allows us to write
\begin{align*}
\phi(x) = & (-1)^{n-1} p(p-1)\cdots(p-n+2) \, h_1 \cdots h_{n-1} \\
& \times \int_{[0,1]^{n-1}} (x+t_1 h_1+\cdots+t_{n-1} h_{n-1})^{p-n+1} \,\textup{d}t_1 \cdots \textup{d}t_{n-1}.
\end{align*}
Consequently,
\begin{align*}
\phi'(x) = & (-1)^{n-1} p(p-1)\cdots(p-n+2)(p-n+1) \, h_1 \cdots h_{n-1} \\
& \times \int_{[0,1]^{n-1}} (x+t_1 h_1+\cdots+t_{n-1} h_{n-1})^{p-n} \,\textup{d}t_1 \cdots \textup{d}t_{n-1}
\end{align*}
and
\begin{align*}
\phi''(x) = & (-1)^{n-1} p(p-1)\cdots(p-n+2)(p-n+1)(p-n) \, h_1 \cdots h_{n-1} \\
& \times \int_{[0,1]^{n-1}} (x+t_1 h_1+\cdots+t_{n-1} h_{n-1})^{p-n-1} \,\textup{d}t_1 \cdots \textup{d}t_{n-1}.
\end{align*}
Thus, for $p\in[1,\infty)$ other than $1,2,\ldots,n-1$ and for $a\leq x\leq b$ we have
\[ |\phi'(x)| \gtrsim_{n,p} \eta^{n-1} \min\{\eta^{p-n},3^{p-n}\} \gtrsim_{n,p} \min\{\eta^{p-1},\eta^{n-1}\} \]
and
\[ |\phi''(x)| \lesssim_{n,p} \max\{\eta^{p-n-1},1\}. \]
Integrating \eqref{eq:oscilintegral} by parts as long as $a<b$, we obtain
\begin{align*}
\mathcal{I}_{h_1,\ldots,h_{n-1}}(u)
& = \frac{1}{2\pi\mathbbm{i}u} \int_{a}^{b} \frac{1}{\phi'(x)} \Big( \frac{\partial}{\partial x} e^{2\pi\mathbbm{i} u \phi(x)} \Big) \,\textup{d}x \\
& = \frac{1}{2\pi\mathbbm{i}u} \bigg( \frac{e^{2\pi\mathbbm{i} u \phi(b)}}{\phi'(b)} - \frac{e^{2\pi\mathbbm{i} u \phi(a)}}{\phi'(a)}
+ \int_{a}^{b} \frac{\phi''(x)}{\phi'(x)^2} e^{2\pi\mathbbm{i} u \phi(x)} \,\textup{d}x \bigg).
\end{align*}
The above bounds for $\phi'$ and $\phi''$ give
\begin{equation}\label{eq:estforICase2}
| \mathcal{I}_{h_1,\ldots,h_{n-1}}(u) | \lesssim_{n,p} \frac{\max\{\eta^{p-n-1},1\}}{|u|\min\{\eta^{2p-2},\eta^{2n-2}\}}
\leq \eta^{-4(n+p)+1}|u|^{-1}.
\end{equation}

Combining Estimates \eqref{eq:estforICase1} and \eqref{eq:estforICase2} from each of the two cases, squaring, and integrating over all $h_1,\ldots,h_{n-1}$ that satisfy \eqref{eq:suppconditandh}, we see that \eqref{eq:Unterm2} is at most a constant (depending on $n$ and $p$) times
\[ \big( \eta + \eta^{-8(n+p)+1} |u|^{-2} \big)^{2^{-n}}. \]

Now, when we have estimated each of the terms \eqref{eq:Unterm1}--\eqref{eq:Unterm3}, we can finally choose $\eta=|u|^{-1/4(n+p)}$ and conclude that \eqref{eq:Unnormest} holds.
\end{proof}

Recall that we have chosen $\varphi$ in Section~\ref{sec:mainproof} and defined $\varrho$ by \eqref{eq:defofrho} in Section~\ref{sec:errorpart}.
Also recall that a function $\psi$ was fixed in Section~\ref{sec:mainproof} and used to define measures $\sigma^\eta$ via Formula \eqref{eq:measuresigmaeta}. These measures are absolutely continuous with respect to the Lebesgue measure on $\mathbb{R}^d$, so, by a slight abuse of notation, we also write $\sigma^{\eta}$ for their densities,
\[ \sigma^{\eta}(x)=\psi_{\eta}(\|x\|_{\ell^p}^p-1). \]

\begin{lemma}\label{lemma:Unnormsigma}
For $d\geq D=D(n,p)$ and $0<\eta<t<1$ we have
\[ \| \sigma^{\eta}\ast\varrho_{t} \|_{\textup{U}^n(\mathbb{R}^d)} \lesssim_{n,p,d} t^{1/3}. \]
\end{lemma}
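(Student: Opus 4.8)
The plan is to adapt the scheme of Cook--Magyar--Pramanik: expand $\sigma^{\eta}$ radially into a superposition of tensor products of one-dimensional oscillatory functions, exploit the multiplicativity of the $\textup{U}^{n}$-norm under tensor products together with Lemma~\ref{lemma:Unnorm1D} applied in all $d\ge D$ coordinates, and interpolate the resulting bound against the elementary gain of a factor $t$ that convolution with the mean-zero kernel $\varrho_{t}$ produces at low radial frequencies.

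Concretely, I would first note that, since $0<\eta<1$, the function $\sigma^{\eta}(x)=\psi_{\eta}(\|x\|_{\ell^{p}}^{p}-1)$ is supported in $\{x:|x_{i}|<2^{1/p}\le 2\text{ for all }i\}$; fixing once and for all a cutoff $\chi\in\textup{C}_{c}^{\infty}(\mathbb{R})$ with $0\le\chi\le1$, $\chi\equiv1$ on $[-2,2]$ and $\mathop{\textup{supp}}\chi\subseteq[-3,3]$, Fourier inversion in the radial variable gives
\[ \sigma^{\eta}(x)=\int_{\mathbb{R}}\widehat{\psi}(\eta\tau)\,e^{-2\pi\mathbbm{i}\tau}\prod_{i=1}^{d}g_{\tau}(x_{i})\,\textup{d}\tau,\qquad g_{\tau}(z):=\chi(z)\,e^{2\pi\mathbbm{i}\tau|z|^{p}}, \]
and hence, writing $g_{\tau}^{\otimes d}$ for the tensor product $x\mapsto\prod_{i}g_{\tau}(x_{i})$, $\sigma^{\eta}\ast\varrho_{t}=\int_{\mathbb{R}}\widehat{\psi}(\eta\tau)e^{-2\pi\mathbbm{i}\tau}\,(g_{\tau}^{\otimes d}\ast\varrho_{t})\,\textup{d}\tau$, the interchange being legitimate because $\widehat{\psi}$ is Schwartz and $\|\varrho_{t}\|_{\textup{L}^{1}}=\|\varrho\|_{\textup{L}^{1}}$. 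Using the triangle inequality for $\|\cdot\|_{\textup{U}^{n}}$ (valid for integrals as well), its multiplicativity $\|\bigotimes_{i}f_{i}\|_{\textup{U}^{n}(\mathbb{R}^{d})}=\prod_{i}\|f_{i}\|_{\textup{U}^{n}(\mathbb{R})}$ (immediate from the definition by Fubini's theorem), and the trivial $\|h\ast\varrho_{t}\|_{\textup{U}^{n}}\le\|\varrho_{t}\|_{\textup{L}^{1}}\|h\|_{\textup{U}^{n}}$, I would derive two bounds for $\|g_{\tau}^{\otimes d}\ast\varrho_{t}\|_{\textup{U}^{n}(\mathbb{R}^{d})}$: from $d\ge D$ and the argument of Lemma~\ref{lemma:Unnorm1D} (which is insensitive to replacing the indicator $\mathbbm{1}_{[-3,3]}$ by the smooth amplitude $\chi$), it is $\lesssim_{n,p,d}\|g_{\tau}\|_{\textup{U}^{n}(\mathbb{R})}^{d}\lesssim(1+|\tau|)^{-2d/D}\le(1+|\tau|)^{-2}$; on the other hand, since $\int\varrho=0$ one may write $(g_{\tau}^{\otimes d}\ast\varrho_{t})(x)=\int[g_{\tau}^{\otimes d}(x-tw)-g_{\tau}^{\otimes d}(x)]\varrho(w)\,\textup{d}w$, and the mean value theorem together with $\|\nabla g_{\tau}^{\otimes d}\|_{\textup{L}^{\infty}}\lesssim_{p}1+|\tau|$ (here $p>1$ is used, so that $|z|^{p}\in\textup{C}^{1}$) gives an $\textup{L}^{\infty}$-bound $\lesssim_{p,d}(1+|\tau|)t$; since $g_{\tau}^{\otimes d}\ast\varrho_{t}$ is supported in $[-6,6]^{d}$, inequality \eqref{eq:gowersLp} upgrades this to $\|g_{\tau}^{\otimes d}\ast\varrho_{t}\|_{\textup{U}^{n}}\lesssim_{n,p,d}(1+|\tau|)t$.

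Finally I would combine the two estimates into $\|g_{\tau}^{\otimes d}\ast\varrho_{t}\|_{\textup{U}^{n}}\lesssim_{n,p,d}\min\{(1+|\tau|)t,\,(1+|\tau|)^{-2}\}$, bound $|\widehat{\psi}(\eta\tau)|\le\|\widehat{\psi}\|_{\textup{L}^{\infty}}$, and split the $\tau$-integral at the crossover scale $|\tau|=t^{-1/3}$: the piece $|\tau|\le t^{-1/3}$ contributes $\lesssim t\int_{|\tau|\le t^{-1/3}}(1+|\tau|)\,\textup{d}\tau\lesssim t\cdot t^{-2/3}=t^{1/3}$, while the piece $|\tau|>t^{-1/3}$ contributes $\lesssim\int_{|\tau|>t^{-1/3}}(1+|\tau|)^{-2}\,\textup{d}\tau\lesssim t^{1/3}$, which is the claim (and explains the exponent $1/3$ as the outcome of balancing $tR^{2}$ against $R^{-1}$). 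There is no single hard estimate here; the point that requires care is the bookkeeping that keeps the unavoidable loss $\|\widehat{\psi}(\eta\cdot)\|_{\textup{L}^{1}}\sim\eta^{-1}$ of the radial expansion from ever entering the argument. This is precisely why one must use a smooth cutoff $\chi$ rather than $\mathbbm{1}_{[-3,3]}$ — a sharp cutoff would produce an extra boundary contribution of size $t^{(n+1)/2^{n}}$ in the second bound, which upon integration over $|\tau|\lesssim t^{-1/3}$ would ruin the conclusion already for $n\ge 4$, where $(n+1)/2^{n}<1/3$ — and why one must pass to the minimum of the two bounds before integrating in $\tau$. Verifying that the proof of Lemma~\ref{lemma:Unnorm1D} goes through verbatim with the smooth amplitude $\prod_{\omega}\chi(\cdot+\omega\cdot h)$ in place of the sharp cutoff, and tracking the dimensional constants through the multiplicativity step, are the only routine leftovers.
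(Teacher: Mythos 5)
Your argument is correct, and it reorganizes the proof in a genuinely different way from the paper. The paper first splits $\sigma^{\eta}\ast\varrho_{t}=\sigma^{\tau}\ast\varrho_{t}+(\sigma^{\eta}-\sigma^{\tau})\ast\varrho_{t}$ with an intermediate mollification scale $\tau$ (ultimately $\tau=t^{1/3}$): the first term is handled by writing $\varrho=\mathop{\textup{div}}v$ as in \eqref{eq:divofvectorv}, moving the derivative onto the density $\sigma^{\tau}$, and using \eqref{eq:gowersLp} together with $\|\partial_i\sigma^{\tau}\|_{\textup{L}^{2^n/(n+1)}}\lesssim\tau^{-2}$ to get $\tau^{-2}t$; only the difference $\sigma^{\eta}-\sigma^{\tau}$ is expanded in the radial frequency variable, where Lemma~\ref{lemma:Unnorm1D} (with the sharp cutoff, exactly as stated), tensorization, and $d\ge D$ give $\lesssim\tau$. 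You instead expand $\sigma^{\eta}$ itself in the radial frequency, insert a smooth cutoff $\chi$ so that each tensor factor is $\textup{C}^1$, and for each frequency take the minimum of a high-frequency oscillatory bound $(1+|\tau|)^{-2}$ and a low-frequency bound $(1+|\tau|)t$ obtained from $\int\varrho=0$ and the mean value theorem, then integrate; the crossover at $|\tau|\sim t^{-1/3}$ produces the same exponent $1/3$, and your remarks about why the cutoff must be smooth (the boundary-shell loss $t^{(n+1)/2^n}$ is fatal for $n\ge4$) and why the $\eta^{-1}$ mass of $\widehat{\psi}(\eta\cdot)$ never enters are exactly the right cautionary points. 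What each route buys: the paper's splitting uses Lemma~\ref{lemma:Unnorm1D} verbatim and keeps the cancellation of $\varrho$ at the level of the measure via the vector field $v$; yours dispenses with $v$ and with differentiating $\sigma^{\tau}$, is uniform in $\eta\in(0,1)$ without using $\eta<t$, but requires rerunning the proof of Lemma~\ref{lemma:Unnorm1D} with the smooth amplitude $\prod_{\omega}\chi(\cdot+\omega\cdot h)$ — not quite ``verbatim,'' since the integration by parts produces one extra term involving $A'/\phi'$, but that term obeys the same $\eta'^{-O(n+p)}|u|^{-1}$ bound, so the adaptation is indeed routine, as are the Minkowski-type step for the $\tau$-integral (the paper's Riemann-sum justification of \eqref{eq:Gowersinttriangle} covers it) and the exact factorization of the $\textup{U}^n$-norm over coordinates.
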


\begin{proof}
A few times in the proof we will find the following inequality useful,
\begin{equation}\label{eq:Gowersinttriangle}
\|f\ast g\|_{\textup{U}^n(\mathbb{R}^d)} \leq \|f\|_{\textup{U}^n(\mathbb{R}^d)} \|g\|_{\textup{L}^1(\mathbb{R}^d)},
\end{equation}
and we will only need it for continuous compactly supported functions $f,g\colon\mathbb{R}^d\to\mathbb{C}$.
It easily follows from the triangle inequality for the Gowers norms, by writing the convolution as an integral and then the integral as a limit of its Riemann sums.
We split
\begin{equation}\label{eq:Unnormsplitting}
\sigma^{\eta}\ast\varrho_{t} = \sigma^{\tau}\ast\varrho_{t} + (\sigma^{\eta}-\sigma^{\tau})\ast\varrho_{t},
\end{equation}
where $\tau$ satisfying $t<\tau<1$ will be chosen later.

In order to handle the first term in \eqref{eq:Unnormsplitting} we write the vector field $v$ from \eqref{eq:defofvectorv} in Cartesian coordinates as $v=(v^{(1)},\ldots,v^{(d)})$ and scale Equation \eqref{eq:divofvectorv} by $t$, to obtain
\[ \varrho_t = t \sum_{i=1}^{d} \partial_i v^{(i)}_t. \]
Using this, \eqref{eq:Gowersinttriangle}, and \eqref{eq:gowersLp} we can estimate
\begin{align*}
\|\sigma^{\tau}\ast\varrho_{t}\|_{\textup{U}^n(\mathbb{R}^d)}
& \leq t \sum_{i=1}^{d} \big\|\sigma^{\tau}\ast\partial_i v^{(i)}_t\big\|_{\textup{U}^n(\mathbb{R}^d)}
= t \sum_{i=1}^{d} \big\|\partial_i \sigma^{\tau}\ast v^{(i)}_t\big\|_{\textup{U}^n(\mathbb{R}^d)} \\
& \leq t \sum_{i=1}^{d} \|\partial_i \sigma^{\tau}\|_{\textup{U}^n(\mathbb{R}^d)} \big\|v^{(i)}_t\big\|_{\textup{L}^{1}(\mathbb{R}^d)}
\lesssim t \sum_{i=1}^{d} \|\partial_i \sigma^{\tau}\|_{\textup{L}^{2^n/(n+1)}(\mathbb{R}^d)}.
\end{align*}
Using
\[ |(\partial_i \sigma^{\tau})(x)|
= \Big| \psi'\Big(\frac{|x_1|^p+\cdots+|x_d|^p-1}{\tau}\Big) \Big| \ \frac{p|x_i|^{p-1}}{\tau^2} \]
and a very crude observation
\begin{equation}\label{eq:sigmasupport}
\mathop{\textup{supp}}\sigma^{\tau} \subseteq [-2,2]^d,
\end{equation}
we finally deduce
\begin{equation}\label{eq:Unsplitterm1}
\|\sigma^{\tau}\ast\varrho_{t}\|_{\textup{U}^n(\mathbb{R}^d)} \lesssim_{p,d} \tau^{-2}t.
\end{equation}

Now we turn to the second term in \eqref{eq:Unnormsplitting}. Using \eqref{eq:Gowersinttriangle} again,
\begin{equation}\label{eq:Unsplitterm2a}
\big\|(\sigma^{\eta}-\sigma^{\tau})\ast\varrho_{t}\big\|_{\textup{U}^n(\mathbb{R}^d)}
\lesssim \|\sigma^{\eta}-\sigma^{\tau}\|_{\textup{U}^n(\mathbb{R}^d)},
\end{equation}
we are lead to estimate the $\textup{U}^n$-norm of $\sigma^{\eta}-\sigma^{\tau}$.
Using the Fourier inversion formula we get
\[ (\sigma^{\eta}-\sigma^{\tau})(x) = \int_{\mathbb{R}} \big( \widehat{\psi}(\eta u) - \widehat{\psi}(\tau u) \big)
e^{2\pi\mathbbm{i} u (\|x\|_{\ell^p}^{p}-1)} \,\textup{d}u. \]
Due to the support observation \eqref{eq:sigmasupport}, multiplication of the last equality by
\[ \mathbbm{1}_{[-3,3]}(x_1) \cdots \mathbbm{1}_{[-3,3]}(x_d) \]
gives
\[ (\sigma^{\eta}-\sigma^{\tau})(x) = \int_{\mathbb{R}} \big( \widehat{\psi}(\eta u) - \widehat{\psi}(\tau u) \big)
e^{-2\pi\mathbbm{i} u} \Big( \prod_{i=1}^{d} \mathbbm{1}_{[-3,3]}(x_i) e^{2\pi\mathbbm{i} u |x_i|^p} \Big) \,\textup{d}u, \]
so the triangle inequality for the Gowers norm yields
\[ \| \sigma^{\eta}-\sigma^{\tau} \|_{\textup{U}^n(\mathbb{R}^d)}
\leq \int_{\mathbb{R}} \big| \widehat{\psi}(\eta u) - \widehat{\psi}(\tau u) \big|
\big\| \mathbbm{1}_{[-3,3]}(x) e^{2\pi\mathbbm{i} u |x|^p} \big\|_{\textup{U}^{n}_{x}(\mathbb{R})}^d \,\textup{d}u. \]
Lemma~\ref{lemma:Unnorm1D} applies, giving us
\[ \| \sigma^{\eta}-\sigma^{\tau} \|_{\textup{U}^n(\mathbb{R}^d)}
\lesssim_{n,p} \int_{\mathbb{R}} \big| \widehat{\psi}(\eta u) - \widehat{\psi}(\tau u) \big| (1+|u|)^{-2d/D} \,\textup{d}u. \]
The last integral is estimated by splitting it into regions $\{|u|<1\}$, $\{1\leq|u|<1/\tau\}$, and $\{|u|\geq1/\tau\}$, which gives
\begin{equation}\label{eq:Unsplitterm2}
\| \sigma^{\eta}-\sigma^{\tau} \|_{\textup{U}^n(\mathbb{R}^d)} \lesssim_{n,p,d} \tau + \tau^{2d/D-1} \lesssim \tau,
\end{equation}
as soon as $d\geq D$.

Recall the splitting \eqref{eq:Unnormsplitting} and combine \eqref{eq:Unsplitterm1}, \eqref{eq:Unsplitterm2a}, and \eqref{eq:Unsplitterm2}. That way we finally obtain
\[ \| \sigma^{\eta}\ast\varrho_{t} \|_{\textup{U}^n(\mathbb{R}^d)} \lesssim_{n,p,d} \tau^{-2}t + \tau, \]
so it remains to choose $\tau=t^{1/3}$.
\end{proof}

We are finally in position to prove Proposition~\ref{prop:uniform}.

\begin{proof}[Proof of Proposition~\ref{prop:uniform}]
We are working in dimensions $d\geq D(n,p)$ in order to be able to use Lemma~\ref{lemma:Unnormsigma}.
Let us fix numbers $\lambda,\varepsilon\in(0,1]$.
Using property \eqref{eq:nconvergence}, shorthand notation \eqref{eq:Fdefinedasaproduct} and \eqref{eq:Fdefinedasaproductnewq}, and Formula \eqref{eq:uniformscales}, we can write
\begin{align*}
\mathcal{N}^{0}_{\lambda}(A) - \mathcal{N}^{\varepsilon}_{\lambda}(A)
& = \lim_{\vartheta\to0^+} \big( \mathcal{N}^{\vartheta}_{\lambda}(A) - \mathcal{N}^{\varepsilon}_{\lambda}(A) \big) \\
& = \lim_{\vartheta\to0^+} \int_{\vartheta}^{\varepsilon} \int_{\mathbb{R}^d} (f\ast\varrho_{t\lambda})(z) \,\textup{d}\sigma_{\lambda}(z) \,\frac{\textup{d}t}{t}.
\end{align*}
Then we apply \eqref{eq:vagueconvergence} to get
\begin{align}
\mathcal{N}^{0}_{\lambda}(A) - \mathcal{N}^{\varepsilon}_{\lambda}(A)
& = \lim_{\vartheta\to0^+} \int_{\vartheta}^{\varepsilon} \Big( \lim_{\eta\to0^+} \int_{\mathbb{R}^d} (f\ast\varrho_{t\lambda})(z) \,\textup{d}\sigma_{\lambda}^{\eta}(z) \Big) \frac{\textup{d}t}{t} \nonumber \\
& = \lim_{\vartheta\to0^+} \int_{\vartheta}^{\varepsilon} \Big( \lim_{\eta\to0^+} \int_{(\mathbb{R}^d)^2} \mathcal{F}(x,y) (\sigma_{\lambda}^{\eta}\ast\varrho_{t\lambda})(y) \,\textup{d}y \,\textup{d}x \Big) \frac{\textup{d}t}{t}. \label{eq:Ndifferencelimit}
\end{align}
From the scaling property of the Gowers norms \eqref{eq:gowersscaling} and Lemma~\ref{lemma:Unnormsigma} we know that
\[ \|\sigma_{\lambda}^{\eta}\ast\varrho_{t\lambda}\|_{\textup{U}^n(\mathbb{R}^d)}
= \|(\sigma^{\eta}\ast\varrho_{t})_{\lambda}\|_{\textup{U}^n(\mathbb{R}^d)}
\lesssim_{n,p,d} \lambda^{-d(1-(n+1)2^{-n})} t^{1/3} \]
for $0<\eta<t<1$. Combining this with an application of Lemma~\ref{lemma:CMPinduction} for $g = \sigma_{\lambda}^{\eta}\ast\varrho_{t\lambda}$, we get
\[ \Big| \int_{(\mathbb{R}^d)^2} \mathcal{F}(x,y) (\sigma_{\lambda}^{\eta}\ast\varrho_{t\lambda})(y) \,\textup{d}y \,\textup{d}x \Big|
\lesssim_{n,p,d} t^{1/3}. \]
Observe that the right hand side of the last estimate no longer depends on $\lambda$, since the factor coming from scaling and the factor coming from Lemma~\ref{lemma:CMPinduction} cancelled each other. Also, this estimate is uniform in $\eta\in(0,t)$ for each fixed $t$.
Taking the limit as $\eta\to0^+$ and using \eqref{eq:Ndifferencelimit}, we can finally control the difference between the two progression-counting expressions as
\[ \big| \mathcal{N}^{0}_{\lambda}(A) - \mathcal{N}^{\varepsilon}_{\lambda}(A) \big|
\lesssim_{n,p,d} \int_{0}^{\varepsilon} \frac{\textup{d}t}{t^{2/3}} \lesssim \varepsilon^{1/3}. \]
This is precisely what we needed to prove.
\end{proof}

\section{Proof of Theorem~\ref{thm:quantifiedlm}}
\label{sec:quantifiedlm}
This section establishes Theorem~\ref{thm:quantifiedlm} from the introductory section. We will be following the same steps as in the proof of Theorem~\ref{thm:maintheorem}, which spanned over Sections~\ref{sec:mainproof}--\ref{sec:uniformpart}, but each step will collapse into a several times shorter argument. For instance, there will be no need for the whole structural induction in the proof of Lemma~\ref{lemma:telescoping}; in order to control the error part we will only have to perform a computation very similar to the induction basis. Also, the oscillatory control of the uniform part done in Lemmata~\ref{lemma:Unnorm1D} and \ref{lemma:Unnormsigma} will be replaced merely by the well-known decay of the Fourier transform of the circle measure. Apart from being quite short, the proof presented in this section is essentially self-contained: it will only use basic properties of the Gaussian functions \eqref{eq:ftofgaussians0}--\eqref{eq:heatequation}, the Gaussian domination trick \eqref{eq:Gaussiandom}, and the aforementioned decay, stated in \eqref{eq:FTofsigma} below. This gives us a great opportunity to illustrate the largeness--smoothness multiscale approach on a simpler problem.

In this section $\sigma$ will denote the arc-length measure of the standard unit circle $\mathbb{S}^1\subseteq\mathbb{R}^2$, i.e., the $1$-dimensional spherical measure in the plane. Normalization of $\sigma$ is not very important, but we can choose it so that $\sigma$ is a probability measure, i.e., $\sigma(\mathbb{S}^1)=1$. Its Fourier transform satisfies
\begin{equation}\label{eq:FTofsigma}
\big|\widehat{\sigma}(\xi)\big| \lesssim (1+\|\xi\|_{\ell^2})^{-1/2}
\end{equation}
for $\xi\in\mathbb{R}^2$. Indeed, it is well-known that this Fourier transform is $2\pi J_0(2\pi\|\xi\|_{\ell^2})$, where $J_\alpha$ are the Bessel functions of the first kind and their asymptotic properties can be found, for instance, in \cite{AS92:hmf}.

This time the pattern-counting quantities are defined, for a measurable set $A\subseteq([0,1]^2)^n$, as
\begin{align*}
\mathcal{N}^{0}_{\lambda}(A) := \int_{(\mathbb{R}^2)^{2n}}
& \prod_{(r_1,\ldots,r_n)\in\{0,1\}^n} \mathbbm{1}_{A}(x_1 + r_1 y_1, x_2 + r_2 y_2, \ldots, x_n + r_n y_n) \\
& \,\textup{d}\sigma_{\lambda}(y_1)\cdots\textup{d}\sigma_{\lambda}(y_n) \,\textup{d}x_1 \cdots \textup{d}x_n
\end{align*}
and
\begin{align*}
\mathcal{N}^{\varepsilon}_{\lambda}(A) := \int_{(\mathbb{R}^2)^{2n}}
& \prod_{(r_1,\ldots,r_n)\in\{0,1\}^n} \mathbbm{1}_{A}(x_1 + r_1 y_1, x_2 + r_2 y_2, \ldots, x_n + r_n y_n) \\
& \times (\sigma_{\lambda}\ast\mathbbm{g}_{\varepsilon\lambda})(y_1)\cdots(\sigma_{\lambda}\ast\mathbbm{g}_{\varepsilon\lambda})(y_n)
\,\textup{d}y_1\cdots\textup{d}y_n \,\textup{d}x_1 \cdots \textup{d}x_n
\end{align*}
for $\lambda,\varepsilon\in(0,1]$. Here $\mathbbm{g}$ stands for the standard Gaussian function \eqref{eq:standardgaussian} on $\mathbb{R}^2$.
One can rewrite
\[ \mathcal{N}^{\varepsilon}_{\lambda}(A) = \int_{(\mathbb{R}^2)^n} (f\ast\mathbbm{G}_{\varepsilon\lambda})(y_1,\ldots,y_n) \,\textup{d}\sigma_{\lambda}(y_1) \cdots \textup{d}\sigma_{\lambda}(y_n), \]
where
\[ f(y_1,\ldots,y_n) := \int_{([0,1]^2)^n} \prod_{(r_1,\ldots,r_n)\in\{0,1\}^n} \mathbbm{1}_{A}(x_1 + r_1 y_1, \ldots, x_n + r_n y_n) \,\textup{d}x_1 \cdots \textup{d}x_n \]
and $\mathbbm{G}$ now denotes a $2n$-dimensional standard Gaussian. As in Section~\ref{sec:mainproof}, it is easy to argue that \eqref{eq:nconvergence} holds by an application of the dominated convergence theorem.
Changing variables $y_i$ to $x_i^{1}=x_i+y_i$ and writing $x_i^{0}$ in place of $x_i$ we can rewrite $\mathcal{N}^{\varepsilon}_{\lambda}(A)$ even more conveniently as
\begin{align}
\mathcal{N}^{\varepsilon}_{\lambda}(A) = \int_{(\mathbb{R}^2)^{2n}}
& \prod_{(r_1,\ldots,r_n)\in\{0,1\}^n} \mathbbm{1}_{A}(x_1^{r_1}, x_2^{r_2}, \ldots, x_n^{r_n}) \nonumber \\
& \times (\sigma_{\lambda}\ast\mathbbm{g}_{\varepsilon\lambda})(x_1^0-x_1^1)\cdots(\sigma_{\lambda}\ast\mathbbm{g}_{\varepsilon\lambda})(x_n^0-x_n^1)
\,\textup{d}x_1^0 \,\textup{d}x_1^1 \cdots \textup{d}x_n^0 \,\textup{d}x_n^1. \label{eq:nepslm}
\end{align}
The above $0$s and $1$s are upper indices and they should not be confused with powers.
The reader can notice that there is no arithmetic structure left in \eqref{eq:nepslm}, except for the differences of two variables appearing as arguments of $\sigma_{\lambda}\ast\mathbbm{g}_{\varepsilon\lambda}$.

For the proof of Theorem~\ref{thm:quantifiedlm} it is sufficient to show the following three estimates, which will be established for every measurable set $A\subseteq([0,1]^2)^d$ with $|A|\geq \delta$, every $\delta,\lambda,\varepsilon\in(0,1]$, every positive integer $J$, and every choice of numbers $\lambda_j$; $j=1,2,\ldots,J$ from $(0,1]$ satisfying $\lambda_j\in(2^{-j},2^{-j+1}]$ for each index $j$. The desired estimates are
\begin{equation}\label{eq:proofoflm1}
\mathcal{N}^{1}_{\lambda}(A) \gtrsim_n \delta^{2^n},
\end{equation}
\begin{equation}\label{eq:proofoflm2}
\sum_{j=1}^{J} \big|\mathcal{N}^{\varepsilon}_{\lambda_j}(A)-\mathcal{N}^{1}_{\lambda_j}(A)\big| \lesssim_{n} \varepsilon^{-5n-1},
\end{equation}
and
\begin{equation}\label{eq:proofoflm3}
\big|\mathcal{N}^{0}_{\lambda}(A)-\mathcal{N}^{\varepsilon}_{\lambda}(A)\big| \lesssim_{n} \varepsilon^{1/2}.
\end{equation}
They are counterparts of Propositions~\ref{prop:structured}--\ref{prop:uniform}.
Once these estimates are shown, the proof of Theorem~\ref{thm:quantifiedlm} concludes as follows.
Take a set $A\subseteq([0,1]^2)^n$ with measure $|A|\geq \delta$.
Let $c_1\in(0,1]$, $C_2\in[1,\infty)$, $C_3\in[1,\infty)$, respectively, be the implied constants in \eqref{eq:proofoflm1}, \eqref{eq:proofoflm2}, \eqref{eq:proofoflm3}; they all depend on $n$.
Choose
\[ \varepsilon := \Big(\frac{c_1 \delta^{2^n}}{3C_3}\Big)^2, \quad J := \Big\lfloor \frac{3 C_2}{c_1 \varepsilon^{5n+1}\delta^{2^n}} \Big\rfloor + 1, \]
so that
\[ J\lesssim_n \delta^{-(10n+3)2^n}. \]
Consider the intervals $(2^{-j},2^{-j+1}]$ for $j=1,2,\ldots,J$. Estimate \eqref{eq:proofoflm2} enables us to choose an index $j\in\{1,\ldots,J\}$ such that each $\lambda\in(2^{-j},2^{-j+1}]$ satisfies
\[ \big|\mathcal{N}^{\varepsilon}_{\lambda}(A)-\mathcal{N}^{1}_{\lambda}(A)\big| \leq C_2 \varepsilon^{-5n-1} J^{-1} \leq \frac{1}{3}c_1\delta^{2^n}. \]
Moreover, $\varepsilon>0$ was chosen so that
\[ \big|\mathcal{N}^{0}_{\lambda}(A)-\mathcal{N}^{\varepsilon}_{\lambda}(A)\big| \leq \frac{1}{3}c_1\delta^{2^n}. \]
Recalling the splitting \eqref{eq:splitting}, one can conclude that $\mathcal{N}^{0}_{\lambda}(A)>0$ for each $\lambda\in(2^{-j},2^{-j+1}]$ and the last interval has desired length.

\subsection{The structured part: proof of \eqref{eq:proofoflm1}}
If $Q_1,Q_2,\ldots,Q_n\subseteq\mathbb{R}^2$ are arbitrary measurable sets with positive finite Lebesgue measures, then for any measurable set $B\subseteq(\mathbb{R}^2)^n$ we have
\begin{align}
\fint_{Q_1\times Q_1\times\cdots\times Q_n\times Q_n}
\prod_{(r_1,\ldots,r_n)\in\{0,1\}^n} \mathbbm{1}_{B}(x_1^{r_1}, \ldots, x_n^{r_n})
\,\textup{d}x_1^0 \,\textup{d}x_1^1 \cdots \textup{d}x_n^0 \,\textup{d}x_n^1 \nonumber & \\
\geq \bigg( \fint_{Q_1\times\cdots\times Q_n} \mathbbm{1}_{B}(x_1, \ldots, x_n)
\,\textup{d}x_1 \cdots \textup{d}x_n \bigg)^{2^n} & . \label{eq:boxnorm}
\end{align}
Inequality \eqref{eq:boxnorm} is easily shown by the induction on the positive integer $n$. The induction basis is trivial, since \eqref{eq:boxnorm} becomes an equality. For the induction step we rewrite the left hand side of \eqref{eq:boxnorm} as
\begin{align*}
\fint_{Q_1\times Q_1\times\cdots\times Q_{n-1}\times Q_{n-1}}
\bigg( \fint_{Q_n} \prod_{(r_1,\ldots,r_{n-1})\in\{0,1\}^{n-1}} \mathbbm{1}_{B}(x_1^{r_1}, \ldots, x_{n-1}^{r_{n-1}}, x_n) \,\textup{d}x_n \bigg)^2 & \\
\textup{d}x_1^0 \,\textup{d}x_1^1 \cdots \textup{d}x_{n-1}^0 \,\textup{d}x_{n-1}^1 &
\end{align*}
and use the Cauchy--Schwarz inequality in the variables $x_1^0,x_1^1,\ldots,x_{n-1}^0,x_{n-1}^1$ to bound it from below by
\begin{align*}
\bigg( \fint_{Q_n} \fint_{Q_1\times Q_1\times\cdots\times Q_{n-1}\times Q_{n-1}}
\prod_{(r_1,\ldots,r_{n-1})\in\{0,1\}^{n-1}} \mathbbm{1}_{B}(x_1^{r_1}, \ldots, x_{n-1}^{r_{n-1}}, x_n) & \\
\textup{d}x_1^0 \,\textup{d}x_1^1 \cdots \textup{d}x_{n-1}^0 \,\textup{d}x_{n-1}^1 \,\textup{d}x_n & \bigg)^2 .
\end{align*}
Then we apply the induction hypothesis to the set
\[ \{ (x_1,\ldots,x_{n-1})\in(\mathbb{R}^2)^{n-1} : (x_1,\ldots,x_{n-1},x_n)\in B \} \]
for each fixed $x_n\in Q_n$, which completes the proof.
Alternatively, inequality \eqref{eq:boxnorm} can be viewed as a particular case of the Cauchy--Schwarz inequality for $n$-dimensional variants of the so-called box-inner products and box-norms; see \cite{GT08:primes,S06:corners,Tao07:exp}.

Let $m$ be the unique positive integer such that $\lambda\in(2^{-m},2^{-m+1}]$.
For every $y\in[-1,1]^2$ we have $(\sigma\ast\mathbbm{g})(y)\geq e^{-8\pi}$, which implies
\begin{equation}\label{eq:circlelower}
\sigma_{\lambda}\ast\mathbbm{g}_{\lambda} \gtrsim 2^{2m} \mathbbm{1}_{[-2^{-m},2^{-m}]^2}.
\end{equation}
Turning back to \eqref{eq:nepslm}, we observe that $\mathcal{N}^{1}_{\lambda}(A)$ can only decrease if we partition each unit square $[0,1]^2$ into the collection $\mathcal{Q}_m$ of $2^{2m}$ congruent squares of sidelength $2^{-m}$ and restrict the domain of integration by imposing that each pair of variables $x_i^0$, $x_i^1$ has to lie in the same square from $\mathcal{Q}_m$. If we also use \eqref{eq:circlelower}, then we obtain
\begin{align*}
\mathcal{N}^{1}_{\lambda}(A) \gtrsim_n (2^{2m})^n \sum_{Q_1,\ldots,Q_n\in\mathcal{Q}_m}
\int_{Q_1\times Q_1\times\cdots\times Q_n\times Q_n}
\prod_{(r_1,\ldots,r_n)\in\{0,1\}^n} \mathbbm{1}_{A}(x_1^{r_1}, \ldots, x_n^{r_n}) & \\
\,\textup{d}x_1^0 \,\textup{d}x_1^1 \cdots \textup{d}x_n^0 \,\textup{d}x_n^1 & .
\end{align*}
Applying \eqref{eq:boxnorm} for each choice of $Q_1,\ldots,Q_n$, we see that the last expression is at least
\[ 2^{-2mn} \sum_{Q_1,\ldots,Q_n\in\mathcal{Q}_m} \Big(\fint_{Q_1\times\ldots\times Q_n} \mathbbm{1}_A\Big)^{2^n}. \]
Using Jensen's inequality for the power function and the average of $2^{2mn}$ real numbers, we arrive at
\[ \mathcal{N}^{1}_{\lambda}(A) \gtrsim_n \Big(\int_{([0,1]^2)^n} \mathbbm{1}_A \Big)^{2^n} \geq \delta^{2^n}. \]

\subsection{The error part: proof of \eqref{eq:proofoflm2}}
Let us denote
\[ \mathcal{F} := \prod_{(r_1,\ldots,r_n)\in\{0,1\}^n} \mathbbm{1}_{A}(x_1^{r_1}, \ldots, x_n^{r_n}), \]
so that this is a function of variables $x_1^0,x_1^1,\ldots,x_n^0,x_n^1\in\mathbb{R}^2$.
For convenience we also write
\[ \mathcal{F}'(x) := \prod_{(r_2,\ldots,r_n)\in\{0,1\}^{n-1}} \mathbbm{1}_{A}(x, x_2^{r_2}, \ldots, x_{n}^{r_{n}}) \]
for $x\in\mathbb{R}^2$, keeping in mind that $\mathcal{F}'(x)$ also depends on $x_2^0,x_2^1,\ldots,x_{n}^0,x_{n}^1$.
This time we define
\[ \Theta_{m,a,b}^{\alpha_1,\ldots,\alpha_n} := - \int_{a}^{b} \int_{(\mathbb{R}^2)^{2n}} \mathcal{F}
\ \mathbbm{k}_{s\alpha_m}(x_m^0-x_m^1) \Big(\prod_{\substack{1\leq i\leq n\\ i\neq m}}\mathbbm{g}_{s\alpha_i}(x_i^0-x_i^1)\Big)
\,\textup{d}x_1^0 \,\textup{d}x_1^1 \cdots \textup{d}x_n^0 \,\textup{d}x_n^1 \,\frac{\textup{d}s}{s} \]
for $m\in\{1,2,\ldots,n\}$, $0<a<b<\infty$, and $\alpha_1,\ldots,\alpha_n\in(0,\infty)$, and
\[ \Xi_{s}^{\alpha_1,\ldots,\alpha_n} := \int_{(\mathbb{R}^2)^{2n}} \mathcal{F}
\ \Big(\prod_{i=1}^{n}\mathbbm{g}_{s\alpha_i}(x_i^0-x_i^1)\Big)
\,\textup{d}x_1^0 \,\textup{d}x_1^1 \cdots \textup{d}x_n^0 \,\textup{d}x_n^1 \]
for $s,\alpha_1,\ldots,\alpha_n\in(0,\infty)$.
We want to prove the estimate
\begin{equation}\label{eq:lmerr3}
\Theta_{m,a,b}^{\alpha_1,\ldots,\alpha_n} \leq 2\pi
\end{equation}
for $m,a,b,\alpha_1,\ldots,\alpha_n$ as above.

First, by the product rule for differentiation and the heat equation \eqref{eq:heatequation} we can write
\begin{align*}
& \frac{\partial}{\partial s} \big( \mathbbm{g}_{s\alpha_1}(y_1) \mathbbm{g}_{s\alpha_2}(y_2) \mathbbm{g}_{s\alpha_3}(y_3) \cdots \mathbbm{g}_{s\alpha_n}(y_n) \big) \\
& = \frac{1}{2\pi s} \big( \mathbbm{k}_{s\alpha_1}(y_1) \mathbbm{g}_{s\alpha_2}(y_2) \mathbbm{g}_{s\alpha_3}(y_3) \cdots \mathbbm{g}_{s\alpha_n}(y_n) \\
& \qquad\ + \mathbbm{g}_{s\alpha_1}(y_1) \mathbbm{k}_{s\alpha_2}(y_2) \mathbbm{g}_{s\alpha_3}(y_3) \cdots \mathbbm{g}_{s\alpha_n}(y_n) + \cdots \\
& \qquad\ + \mathbbm{g}_{s\alpha_1}(y_1) \mathbbm{g}_{s\alpha_2}(y_2) \mathbbm{g}_{s\alpha_3}(y_3) \cdots \mathbbm{k}_{s\alpha_n}(y_n) \big)
\end{align*}
for $y_1,y_2,y_3,\ldots,y_n\in\mathbb{R}^2$. Substituting $y_i=x_i^0-x_i^1$, using the fundamental theorem of calculus in the variable $s$, multiplying by $\mathcal{F}$, and integrating in $x_1^0, x_1^1, \ldots, x_n^0, x_n^1$ we conclude
\begin{equation}\label{eq:lmerr2}
\sum_{m=1}^{n} \Theta_{m,a,b}^{\alpha_1,\ldots,\alpha_n} = 2\pi \big( \Xi_{a}^{\alpha_1,\ldots,\alpha_n} - \Xi_{b}^{\alpha_1,\ldots,\alpha_n} \big).
\end{equation}
Next, using the second equality from \eqref{eq:teleconv0} one can write
\begin{align*}
- \int_{(\mathbb{R}^2)^2} \mathcal{F} &
\ \mathbbm{k}_{s\alpha_1}(x_1^0-x_1^1) \,\textup{d}x_1^0 \,\textup{d}x_1^1 \\
& = 2 \sum_{l=1}^{2} \int_{(\mathbb{R}^2)^3} \mathcal{F}'(x_1^0) \mathcal{F}'(x_1^1)
\,\mathbbm{h}_{2^{-1/2}s\alpha_1}^{(l)}(x_1^0-q) \,\mathbbm{h}_{2^{-1/2}s\alpha_1}^{(l)}(x_1^1-q) \,\textup{d}x_1^0 \,\textup{d}x_1^1 \,\textup{d}q \\
& = 2 \sum_{l=1}^{2} \int_{\mathbb{R}^2} \Big( \int_{\mathbb{R}^2} \mathcal{F}'(x) \,\mathbbm{h}_{2^{-1/2}s\alpha_1}^{(l)}(x-q) \,\textup{d}x \Big)^2 \,\textup{d}q \geq 0,
\end{align*}
so $\Theta_{1,a,b}^{\alpha_1,\ldots,\alpha_n} \geq 0$ and, completely analogously, we also prove that
\begin{equation}\label{eq:lmerr5}
\Theta_{m,a,b}^{\alpha_1,\ldots,\alpha_n} \geq 0
\end{equation}
for each $m\in\{1,\ldots,n\}$.
Finally, making a crude estimate
\[ \mathcal{F} \leq \mathbbm{1}_{([0,1]^2)^n}(x_1^1, \ldots, x_n^1), \]
integrating in $x_1^0, \ldots, x_n^0\in\mathbb{R}^2$, and then integrating in $x_1^1, \ldots, x_n^1\in[0,1]^2$, we obtain
\begin{equation}\label{eq:lmerr6}
0 \leq \Xi_{s}^{\alpha_1,\ldots,\alpha_n} \leq 1
\end{equation}
for $s,\alpha_1,\ldots,\alpha_n\in(0,\infty)$.
From \eqref{eq:lmerr5} and \eqref{eq:lmerr6} we see that the left hand side of \eqref{eq:lmerr2} is a sum of $n$ nonnegative terms that add up to at most $2\pi$. Thus, each of these terms is bounded individually by $2\pi$, which finalizes the proof of \eqref{eq:lmerr3}.

Now we turn to estimation of the left hand side of \eqref{eq:proofoflm2}.
Using the product rule for differentiation and the heat equation \eqref{eq:heatequation}, similarly as before we get
\begin{align*}
& \frac{\partial}{\partial t} \Big( (\sigma_{\lambda}\ast\mathbbm{g}_{t\lambda})(y_1) (\sigma_{\lambda}\ast\mathbbm{g}_{t\lambda})(y_2) (\sigma_{\lambda}\ast\mathbbm{g}_{t\lambda})(y_3) \cdots (\sigma_{\lambda}\ast\mathbbm{g}_{t\lambda})(y_n) \Big) \\
& = \frac{1}{2\pi t} \Big(
(\sigma_{\lambda}\ast\mathbbm{k}_{t\lambda})(y_1) (\sigma_{\lambda}\ast\mathbbm{g}_{t\lambda})(y_2) (\sigma_{\lambda}\ast\mathbbm{g}_{t\lambda})(y_3) \cdots (\sigma_{\lambda}\ast\mathbbm{g}_{t\lambda})(y_n) \\
& \qquad\ + (\sigma_{\lambda}\ast\mathbbm{g}_{t\lambda})(y_1) (\sigma_{\lambda}\ast\mathbbm{k}_{t\lambda})(y_2) (\sigma_{\lambda}\ast\mathbbm{g}_{t\lambda})(y_3) \cdots (\sigma_{\lambda}\ast\mathbbm{g}_{t\lambda})(y_n) + \cdots \\
& \qquad\ + (\sigma_{\lambda}\ast\mathbbm{g}_{t\lambda})(y_1) (\sigma_{\lambda}\ast\mathbbm{g}_{t\lambda})(y_2) (\sigma_{\lambda}\ast\mathbbm{g}_{t\lambda})(y_3) \cdots (\sigma_{\lambda}\ast\mathbbm{k}_{t\lambda})(y_n) \Big)
\end{align*}
for $\lambda\in(0,\infty)$ and $y_1,\ldots,y_n\in\mathbb{R}^2$.
By the fundamental theorem of calculus we now see that $\mathcal{N}^{\varepsilon}_{\lambda_j}(A)-\mathcal{N}^{1}_{\lambda_j}(A)$ is the sum of
\begin{align}
-\frac{1}{2\pi} \int_{\varepsilon}^{1} \int_{(\mathbb{R}^2)^{2n}} \mathcal{F} \
(\sigma_{\lambda_j}\ast\mathbbm{k}_{t\lambda_j})(x_1^0-x_1^1) (\sigma_{\lambda_j}\ast\mathbbm{g}_{t\lambda_j})(x_2^0-x_2^1) \cdots (\sigma_{\lambda_j}\ast\mathbbm{g}_{t\lambda_j})(x_n^0-x_n^1) & \nonumber \\
\,\textup{d}x_1^0 \,\textup{d}x_1^1 \,\textup{d}x_2^0 \,\textup{d}x_2^1 \cdots \textup{d}x_n^0 \,\textup{d}x_n^1 \,\frac{\textup{d}t}{t} & \label{eq:lmerr1}
\end{align}
and $n-1$ analogous terms.
For a positive integer $j$, number $t\in(0,\infty)$, and $s\in[2^{-j-5}t,2^{-j-4}t]$ this time we denote
\[ r_j(s,t) := \sqrt{t^2\lambda_j^2-2s^2},\quad c_j(s,t) := \frac{t^2\lambda_j^2}{s^2} \]
and observe
\begin{equation}\label{eq:lmerr7}
s\sim 2^{-j}t \sim t \lambda_j, \quad r_j(s,t)\sim t \lambda_j, \quad c_j(s,t)\sim 1.
\end{equation}
Convolution identities \eqref{eq:teleconv0} and \eqref{eq:teleconv} imply
\[ \sigma_{\lambda_j}\ast\mathbbm{k}_{t\lambda_j}
= c_j(s,t) \sum_{l=1}^2 \sigma_{\lambda_j} \ast \mathbbm{g}_{r_j(s,t)} \ast \mathbbm{h}_{s}^{(l)} \ast \mathbbm{h}_{s}^{(l)}. \]
We multiply the integrand in \eqref{eq:lmerr1} by \eqref{eq:lpeq3} and substitute the equality from the last display.
Taking \eqref{eq:lmerr7} into account we conclude that the left hand side of \eqref{eq:proofoflm2} is at most a constant times
\begin{align}
\sum_{j=1}^{J} \sum_{l=1}^2 \int_{\varepsilon}^{1} \int_{2^{-j-5}t}^{2^{-j-4}t} \int_{(\mathbb{R}^2)^{2n}}
& \Big| \int_{\mathbb{R}^2} \mathcal{F}'(x_1^0) \,\mathbbm{h}_{s}^{(l)}(x_1^0-q^0) \,\textup{d}x_1^0 \Big|
\Big| \int_{\mathbb{R}^2} \mathcal{F}'(x_1^1) \,\mathbbm{h}_{s}^{(l)}(x_1^1-q^1) \,\textup{d}x_1^1 \Big| \nonumber \\
& \times (\sigma_{\lambda_j} \ast \mathbbm{g}_{r_j(s,t)})(q^0-q^1) \,\Big( \prod_{i=2}^{n} (\sigma_{\lambda_j} \ast \mathbbm{g}_{t\lambda_j})(x_i^0-x_i^1) \Big) \nonumber \\
& \,\textup{d}q^0 \,\textup{d}q^1 \,\textup{d}x_2^0 \,\textup{d}x_2^1 \cdots \textup{d}x_n^0 \,\textup{d}x_n^1 \,\frac{\textup{d}s}{s} \,\frac{\textup{d}t}{t} \label{eq:lmerr8}
\end{align}
Since Gaussian tails decay faster than any polynomial, we trivially have
\[ (\sigma_{t^{-1}}\ast\mathbbm{g})(x) \lesssim \int_{\mathbb{R}^2} \Big(1+\Big\|x-\frac{y}{t}\Big\|_{\ell^2}\Big)^{-5} \,\textup{d}\sigma(y)
\lesssim \varepsilon^{-5} (1+\|x\|_{\ell^2})^{-5} \]
for $t\in[\varepsilon,1]$ and $x\in\mathbb{R}^2$. In the same way, by also using \eqref{eq:lmerr7}, we obtain
\[ (\sigma_{t^{-1}}\ast\mathbbm{g}_{r_j(s,t)t^{-1}\lambda_j^{-1}})(x) \lesssim \varepsilon^{-5} (1+\|x\|_{\ell^2})^{-5} \]
for $j,t,s$ as above.
Rescaling by $t\lambda_j s^{-1}$ taking \eqref{eq:lmerr7} into account again, we end up with
\[ (\sigma_{\lambda_j s^{-1}}\ast\mathbbm{g}_{t\lambda_j s^{-1}})(x) \lesssim \varepsilon^{-5} (1+\|x\|_{\ell^2})^{-5},\quad
(\sigma_{\lambda_j s^{-1}}\ast\mathbbm{g}_{r_j(s,t) s^{-1}})(x) \lesssim \varepsilon^{-5} (1+\|x\|_{\ell^2})^{-5}. \]
Now we use Estimate \eqref{eq:Gaussiandom} in dimension $d=2$, which dominates the Schwartz tails by a superpositions of dilated Gaussians.
Yet another rescaling, this time by $s$, yields
\[ (\sigma_{\lambda_j}\ast\mathbbm{g}_{t\lambda_j})(x) \lesssim \varepsilon^{-5}\int_1^\infty \mathbbm{g}_{\beta s}(x) \,\frac{\textup{d}\beta}{\beta^4},
\quad (\sigma_{\lambda_j}\ast\mathbbm{g}_{r_j(s,t)})(x) \lesssim \varepsilon^{-5}\int_1^\infty \mathbbm{g}_{\beta s}(x) \,\frac{\textup{d}\beta}{\beta^4}. \]
Using these estimates we can dominate \eqref{eq:lmerr8} by a constant multiple of
\begin{align*}
\varepsilon^{-5n} \sum_{j=1}^{J} \sum_{l=1}^2 & \int_{[1,\infty)^n} \int_{\varepsilon}^{1} \int_{2^{-j-5}t}^{2^{-j-4}t} \int_{(\mathbb{R}^2)^{2n}}
\Big| \int_{\mathbb{R}^2} \mathcal{F}'(x_1^0) \,\mathbbm{h}_{s}^{(l)}(x_1^0-q^0) \,\textup{d}x_1^0 \Big| \\
& \times \Big| \int_{\mathbb{R}^2} \mathcal{F}'(x_1^1) \,\mathbbm{h}_{s}^{(l)}(x_1^1-q^1) \,\textup{d}x_1^1 \Big|
\,\mathbbm{g}_{\beta_1 s}(q^0-q^1) \,\Big( \prod_{i=2}^{n} \mathbbm{g}_{\beta_i s}(x_i^0-x_i^1) \Big) \\
& \,\textup{d}q^0 \,\textup{d}q^1 \,\textup{d}x_2^0 \,\textup{d}x_2^1 \cdots \textup{d}x_n^0 \,\textup{d}x_n^1 \,\frac{\textup{d}s}{s} \,\frac{\textup{d}t}{t} \,\frac{\textup{d}\beta_1}{\beta_1^4} \cdots \frac{\textup{d}\beta_n}{\beta_n^4}.
\end{align*}
Using the Cauchy--Schwarz inequality in $q^0, q^1, x_2^0, x_2^1, \ldots, x_n^0, x_n^1, s, t, \beta_1, \ldots, \beta_n, l, j$ and observing mutually symmetric roles of $x_1^0$ and $x_1^1$, we bound this by
\begin{align*}
\mathcal{I} := \varepsilon^{-5n} \sum_{j=1}^{J} \sum_{l=1}^2 & \int_{[1,\infty)^n} \int_{\varepsilon}^{1} \int_{2^{-j-5}t}^{2^{-j-4}t} \int_{(\mathbb{R}^2)^{2n}}
\Big( \int_{\mathbb{R}^2} \mathcal{F}'(x_1^0) \,\mathbbm{h}_{s}^{(l)}(x_1^0-q^0) \,\textup{d}x_1^0 \Big)^2 \mathbbm{g}_{\beta_1 s}(q^0-q^1) \\
& \times \Big( \prod_{i=2}^{n} \mathbbm{g}_{\beta_i s}(x_i^0-x_i^1) \Big)
\,\textup{d}q^0 \,\textup{d}q^1 \,\textup{d}x_2^0 \,\textup{d}x_2^1 \cdots \textup{d}x_n^0 \,\textup{d}x_n^1 \,\frac{\textup{d}s}{s} \,\frac{\textup{d}t}{t} \,\frac{\textup{d}\beta_1}{\beta_1^4} \cdots \frac{\textup{d}\beta_n}{\beta_n^4}.
\end{align*}
Note that in the expression defining $\mathcal{I}$ we can easily integrate in the variable $q^1$.
Then we expand out the square of the integral in $x_1^0$ denoting the second copy of that variable conveniently by $x_1^1$ again.
If we also use the convolution identity
\[ \sum_{l=1}^2 \int_{\mathbb{R}^2} \mathbbm{h}_{s}^{(l)}(x_1^0-q^0) \,\mathbbm{h}_{s}^{(l)}(x_1^1-q^0) \,\textup{d}q^0
= - \frac{1}{2} \mathbbm{k}_{2^{1/2}s}(x_1^0-x_1^1), \]
which follows from \eqref{eq:teleconv0}, and sum in $j$, then we can recognize $\mathcal{I}$ as
\[ \mathcal{I}
= \frac{1}{2} \varepsilon^{-5n} \int_{[1,\infty)^n} \int_{\varepsilon}^{1} \Theta_{1,2^{-J-5}t,2^{-5}t}^{2^{1/2},\beta_2,\ldots,\beta_n} \,\frac{\textup{d}t}{t} \,\frac{\textup{d}\beta_1}{\beta_1^4} \cdots \frac{\textup{d}\beta_n}{\beta_n^4}. \]
Estimate \eqref{eq:proofoflm2} follows simply by using \eqref{eq:lmerr3} and integrating in all of the remaining variables.

\subsection{The uniform part: proof of \eqref{eq:proofoflm3}}
Take $0<\vartheta<\varepsilon$. Exactly as in the previous subsection, we see that $\mathcal{N}^{\vartheta}_{\lambda}(A)-\mathcal{N}^{\varepsilon}_{\lambda}(A)$ is the sum of
\begin{align}
-\frac{1}{2\pi} \int_{\vartheta}^{\varepsilon} \int_{(\mathbb{R}^2)^{2n}} \mathcal{F} \
(\sigma_{\lambda}\ast\mathbbm{k}_{t\lambda})(x_1^0-x_1^1) (\sigma_{\lambda}\ast\mathbbm{g}_{t\lambda})(x_2^0-x_2^1) \cdots (\sigma_{\lambda}\ast\mathbbm{g}_{t\lambda})(x_n^0-x_n^1) & \nonumber \\
\,\textup{d}x_1^0 \,\textup{d}x_1^1 \,\textup{d}x_2^0 \,\textup{d}x_2^1 \cdots \textup{d}x_n^0 \,\textup{d}x_n^1 \,\frac{\textup{d}t}{t} & \label{eq:lmunif1}
\end{align}
and $n-1$ analogous terms. We begin by working with a fixed $t\in[\vartheta,\varepsilon]$ and split $\mathcal{F}$ as $\mathcal{F}'(x_1^0)\mathcal{F}'(x_1^1)$.
For fixed $x_2^0,x_2^1,\ldots,x_n^0,x_n^1\in\mathbb{R}^2$, Plancherel's theorem (i.e., unitarity of the Fourier transform on $\textup{L}^2(\mathbb{R}^d)$) gives
\begin{align}
& \int_{(\mathbb{R}^2)^{2}} \mathcal{F} \, (\sigma_{\lambda}\ast\mathbbm{k}_{t\lambda})(x_1^0-x_1^1) \,\textup{d}x_1^0 \,\textup{d}x_1^1
= \int_{\mathbb{R}^2} (\mathcal{F}'\ast\sigma_{\lambda}\ast\mathbbm{k}_{t\lambda})(x_1^0) \,\mathcal{F}'(x_1^0) \,\textup{d}x_1^0 \nonumber \\
& = \int_{\mathbb{R}^2} (\widehat{\mathcal{F}'\ast\sigma_{\lambda}\ast\mathbbm{k}_{t\lambda}})(\xi) \,\overline{\widehat{\mathcal{F}'}(\xi)} \,\textup{d}\xi
= \int_{\mathbb{R}^2} \big|\widehat{\mathcal{F}'}(\xi)\big|^2 \,\widehat{\sigma}(\lambda\xi) \,\widehat{\mathbbm{k}}(t\lambda\xi) \,\textup{d}\xi. \label{eq:lmunif2}
\end{align}
If $\|\xi\|_{\ell^2}\leq \lambda^{-1}$, then Estimate \eqref{eq:FTofsigma} and Equality \eqref{eq:ftofk} give
\[ \big| \widehat{\sigma}(\lambda\xi) \,\widehat{\mathbbm{k}}(t\lambda\xi) \big|
\leq t^2\lambda^2\|\xi\|_{\ell^2}^2 e^{-\pi t^2\lambda^2\|\xi\|_{\ell^2}^2} \leq t^2 \leq t^{1/2}. \]
On the other hand, if $\|\xi\|_{\ell^2}>\lambda^{-1}$, then, by Estimate \eqref{eq:FTofsigma}, Equality \eqref{eq:ftofk}, and an easy observation
\[ \sup_{s\in[0,\infty)} s^{3/2} e^{-\pi s^2} < \infty, \]
we have
\[ \big| \widehat{\sigma}(\lambda\xi) \,\widehat{\mathbbm{k}}(t\lambda\xi) \big|
\lesssim t^2 \lambda^{3/2} \|\xi\|_{\ell^2}^{3/2} e^{-\pi t^2\lambda^2\|\xi\|_{\ell^2}^2} \lesssim t^{1/2}. \]
Consequently, \eqref{eq:lmunif2} is bounded in the absolute value by a constant times
\[ t^{1/2} \int_{\mathbb{R}^2} \big|\widehat{\mathcal{F}'}(\xi)\big|^2 \,\textup{d}\xi
= t^{1/2} \big\|\widehat{\mathcal{F}'}\big\|_{\textup{L}^2(\mathbb{R}^2)}^2
= t^{1/2} \|\mathcal{F}'\|_{\textup{L}^2(\mathbb{R}^2)}^2
\leq t^{1/2} \|\mathbbm{1}_{[0,1]^2}\|_{\textup{L}^2(\mathbb{R}^2)}^2 = t^{1/2}. \]
Multiplying \eqref{eq:lmunif2} by $n-1$ convolutions of the form $\sigma_{\lambda}\ast\mathbbm{g}_{t\lambda}$ and integrating, first in $x_2^0,\ldots,x_n^0\in\mathbb{R}^2$ and then in $x_2^1,\ldots,x_n^1\in[0,1]^2$, we estimate the absolute value of \eqref{eq:lmunif1} by a constant times
\[ \int_{\vartheta}^{\varepsilon} t^{1/2} \,\frac{\textup{d}t}{t} \lesssim \varepsilon^{1/2}. \]
Thus,
\[ \big| \mathcal{N}^{\vartheta}_{\lambda}(A)-\mathcal{N}^{\varepsilon}_{\lambda}(A) \big| \lesssim_n \varepsilon^{1/2}. \]
Note that this bound is uniform in $\vartheta\in(0,\varepsilon)$, so letting $\vartheta\to0^+$ we complete the proof of \eqref{eq:proofoflm3}.

\begin{remark}\label{rem:reprovinglm}
It is worth noting that the above proof gives slightly more than stated in Theorem~\ref{thm:quantifiedlm}, since Estimate \eqref{eq:proofoflm2} is uniform in the number of scales $J$.
As a consequence, we also reprove \cite[Theorem~1.1~(i)]{LM19:hypergraphs} by Lyall and Magyar:
for every measurable set $A\subseteq(\mathbb{R}^2)^n$ with $\overline{\delta}(A)>0$ there exists $\lambda_0=\lambda_0(n,A)\in(0,\infty)$ such that for every $\lambda\in[\lambda_0,\infty)$ one can find $x_1,\ldots,x_n, y_1,\ldots,y_n \in\mathbb{R}^2$ satisfying \eqref{eq:thquantlm1} and \eqref{eq:thquantlm2}.
The reduction of this statement to \eqref{eq:proofoflm1}--\eqref{eq:proofoflm3} is easy and standard; the details can be found for instance in \cite{CMP15:roth} or \cite{DKR17:corner}.
\end{remark}

\section*{Acknowledgments}
V. K. is supported in part by the \emph{Croatian Science Foundation} under project UIP-2017-05-4129 (MUNHANAP), and in part by the \emph{Fulbright Scholar Program}. He also appreciates hospitality of the \emph{Georgia Institute of Technology} in the academic year 2019--2020 and that of the \emph{California Institute of Technology} while this research was performed.


\end{document}